\setlist{itemsep=1.0 pt}
\theoremstyle{plain}
\newtheorem{teo}{Theorem}[section]
\newtheorem{prop}[teo]{Proposition}
\newtheorem{lemma}[teo]{Lemma}
\newtheorem{cor}[teo]{Corollary}
\theoremstyle{definition}
\newtheorem{defi}[teo]{Definition}
\newtheorem{nota}[teo]{Notation}
\newtheorem{ass}[teo]{Assumption}
\newtheorem{ex/}[teo]{Example}
\newtheorem{rmk/}[teo]{Remark}
\newenvironment{rmk}
  {%
   \pushQED{\qed}\begin{rmk/}}
  {\popQED\end{rmk/}}
\numberwithin{equation}{section}
\DeclareMathOperator{\NN}{\mathrm{NN-syn}}
\DeclareMathOperator{\NNc}{\mathrm{NN-syn,c}}
\DeclareMathOperator{\lrigs}{\mathrm{lrig-syn}}
\DeclareMathOperator{\rigs}{\mathrm{rig-syn}}
\DeclareMathOperator{\lrig}{\mathrm{lrig}}
\DeclareMathOperator{\rig}{\mathrm{rig}}
\DeclareMathOperator{\rigfp}{\mathrm{rig-Gfp}}
\newcommand{\Hom}{\mathrm{Hom}}
\newcommand{\Aut}{\mathrm{Aut}}
\newcommand{\Ker}{\mathrm{Ker}}
\newcommand{\Imm}{\mathrm{Im}}
\newcommand{\Ext}{\mathrm{Ext}}
\newcommand{\A}{\mathcal{A}}
\newcommand{\B}{\mathbb{B}}
\newcommand{\D}{\mathbb{D}}
\newcommand{\Q}{\mathbb{Q}}
\newcommand{\R}{\mathbb{R}}
\newcommand{\C}{\mathbb{C}}
\newcommand{\F}{\mathbb{F}}
\newcommand{\Z}{\mathbb{Z}}
\newcommand{\N}{\mathbb{N}}
\newcommand{\Hf}{{\boldsymbol{f}}}
\newcommand{\Hg}{{\boldsymbol{g}}}
\newcommand{\Hh}{{\boldsymbol{h}}}
\newcommand{\DD}{\mathcal{D}}
\newcommand{\FF}{\mathcal{F}}
\newcommand{\HH}{\mathcal{H}}
\newcommand{\OO}{\mathcal{O}}
\newcommand{\TT}{\mathcal{T}}
\newcommand{\W}{\mathcal{W}}
\newcommand{\XX}{\mathcal{X}}
\newcommand{\VV}{\mathcal{V}}
\newcommand{\WW}{\mathcal{W}}
\newcommand{\ZZ}{\mathcal{Z}}
\newcommand{\Spec}{\mathrm{Spec}}
\newcommand{\Spa}{\mathrm{Spa}}
\newcommand{\SL}{\mathrm{SL}}
\newcommand{\GL}{\mathrm{GL}}
\newcommand{\res}{\mathrm{Res}}
\newcommand{\ord}{\mathrm{ord}}
\newcommand{\Gal}{\mathrm{Gal}}
\newcommand{\Sum}{\mathlarger{\sum}}
\newcommand{\Tr}{\mathrm{Tr}}
\newcommand{\et}{\mathrm{\acute{e}t}}
\newcommand{\proet}{\mathrm{pro\acute{e}t}}
\newcommand{\etc}{\mathrm{\acute{e}t,c}}
\newcommand{\ket}{\mathrm{k\acute{e}t}}
\newcommand{\dR}{\mathrm{dR}}
\newcommand{\syn}{\mathrm{syn}}
\newcommand{\st}{\mathrm{st}}
\newcommand{\pst}{\mathrm{pst}}
\newcommand{\HK}{\mathrm{HK}}
\newcommand{\cris}{\mathrm{cris}}
\newcommand{\Fil}{\mathrm{Fil}}
\newcommand{\BK}{\mathrm{BK}}
\newcommand{\Ig}{\mathrm{Ig}}
\newcommand{\an}{\mathrm{an}}
\newcommand{\DET}{\mathtt{Det}}
\newcommand{\can}{\mathrm{can}}
\newcommand{\Tate}{\mathrm{Tate}}
\newcommand{\Sym}{\mathrm{Sym}}
\newcommand{\Hs}{\mathscr{H}}
\newcommand{\Ls}{\mathscr{L}}
\newcommand{\pr}{\mathtt{pr}}
\newcommand{\Es}{\mathscr{E}}
\begin{document}

\title{Explicit reciprocity laws for diagonal classes: higher level cases}
\author{Luca Marannino}
\date{last update: \today}

\begin{abstract}
We generalize the $p$-adic explicit reciprocity laws for balanced diagonal classes appearing in \cite{DR2017} and \cite{BSV2020a} to the case of geometric balanced triples $(f,g,h)$ of modular eigenforms where $f$ is a $p$-ordinary newform, while $g$ and $h$ are allowed to be (both) supercuspidal at $p$ or (both) ramified principal series at $p$.
\end{abstract}

\address{Università degli Studi di Milano, Dipartimento di Matematica ``Federigo Enriques", via Saldini 50, 20123, Milano, Italy}
\email{luca.marannino@unimi.it}

\maketitle
\tableofcontents

\section{Introduction and statement of the main results}
\subsection{Main results}
Let $p$ denote an odd rational prime. Fix a positive integer $M$ coprime to $p$ and a positive integer $t$ such that $Mp^t\geq 5$. Let $Y_t:=Y_1(Mp^t)_{\Q}$ denote the \emph{open} modular curve over $\Q$ of level $\Gamma_1(Mp^t)$ and $X_t:=X_1(Mp^t)_\Q$ denote the \emph{compactified} modular curve of that level. We consider a triple of cuspidal modular forms
\[
f\in S_k(Mp^t,\chi_f),\quad g\in S_l(Mp^t,\chi_g),\quad h\in S_m(Mp^t,\chi_h).
\]

\begin{ass}
We assume that the triple $(f,g,h)$ satisfies the following requirements:

\begin{itemize}
\label{introass}
\item [(i)] for $\xi\in\{f,g,h\}$, we assume that $\xi$ is a normalized eigenform (for the level $Mp^t$) and that $\xi$ is an eigenform for the $U_p$ operator;
\item [(ii)] The triple $(f,g,h)$ is \emph{self-dual}, i.e., $\chi_f\chi_g\chi_h$ is the trivial character modulo $Mp^t$ (in particular $k+l+m$ is an even integer);
\item [(iii)] The triple of weights $(k,l,m)$ is \textbf{balanced} and \emph{geometric}, i.e., $(k,l,m)$ are the sizes of the edges of a triangle and $\nu\geq 2$ for $\nu\in\{k,l,m\}$.
\end{itemize}
\end{ass}

We fix a finite and large enough extension $L$ of $\Q_p$ (with ring of integers $\OO_L$) containing the Fourier coefficients of $f,g,h$ (and a primitive $Mp^t$-th root of $1$).

We write $\mathbf{r}:=(k-2,l-2,m-2)\in(\Z_{\geq 0})^3$ and $r:=(k+l+m-6)/2$.

\medskip
In part 3 of \cite{BSVast1}, the authors associate to the triple $(f,g,h)$ a Galois cohomology class $\kappa(f,g,h)\in H^1(\Q,V(f,g,h))$. Here $V(f,g,h)$ is essentially a suitable twist of the tensor product of the (duals of the) Deligne representations attached to $f,g,h$ (more precisely, the direct sum of some copies of this tensor product). 

These cohomology classes are realized as the $(f,g,h)$-isotypical projection of the pushforward along the diagonal $d_t: Y_t\hookrightarrow Y_t^3$ ($p$-adic Abel--Jacobi map) of an invariant $\DET_\mathbf{r}^\et$ depending only on the triple of weights $(k,l,m)$ (or equivalently, as shown in \cite[Section 3.2]{BSVast1}, the pushforward of a so-called \emph{generalized Gross--Kudla--Schoen diagonal cycle} on the corresponding product of Kuga--Sato varieties, again only depending on $\mathbf{r}$). The construction of $\kappa(f,g,h)$ is recalled in more detail in Section \ref{etaleAbelJacobichapt}.

\medskip
We will always work under some $p$-ordinarity hypothesis on the form $f$.
\begin{itemize}
    \item  [($f$-ord)] There exist a positive integer $M_1\mid M$ and $s\in\Z_{\geq 1}$ such that $f\in S_k(M_1p^s,\chi_f)$ is a $p$-stabilized $p$-ordinary newform of level $M_1p^s\geq 5$ (i.e., $f$ is either $p$-ordinary and new of level $M_1p^s$ or the ordinary $p$-stabilization of a newform of level $M_1$)
\end{itemize}

The setting to which we dedicate more relevance in this work is characterised by the following assumption.

\begin{itemize}
    \item [(SC)] The forms $g$ and $h$ are supercuspidal at $p$ and lie in the kernel of $U_p$.
\end{itemize}

A first complication introduced by assumption (SC) is that one can only hope that the class $\kappa(f,g,h)$, viewed as a local class in $H^1(\Q_p, V(f,g,h))$, becomes crystalline over a non-trivial finite extension of $\Q_p$. Nevertheless, in Sections \ref{padicHTchapter} and \ref{syntomicabeljacobichapt} we explain (under some minimal assumptions) how to view the Bloch--Kato logarithm of $\kappa(f,g,h)$ as a linear functional
\[
\log_\BK^{fgh}(\kappa(f,g,h)):\Fil^0(V^*_\dR(f,g,h))\to L\,.
\]
Here $V^*(f,g,h)$ arises as the Kummer dual of $V(f,g,h)$ (and by the self-duality assumption on  $(f,g,h)$ it is actually isomorphic to $V(f,g,h)$ itself).

\medskip
One can find a distinguished element
\[
\eta^{\varphi=a_p}_f\otimes\omega_g\otimes\omega_h\otimes t_{r+2}\in\Fil^0(V^*_\dR(f,g,h))
\]
which is defined more precisely in Section \ref{studyphiNsection} (let us just note here that $\{t_n\}_{n\in\Z}$ denotes a compatible choice of generators for $\D_\dR(\Q_p(n))$ for varying $n\in\Z$, corresponding to a choice of a uniformizer $t\in\B_\dR^+$). The explicit reciprocity law alluded to in the title consists in giving an explicit and computable description of the value of $\log_\BK^{fgh}(\kappa(f,g,h))$ at $\eta^{\varphi=a_p}_f\otimes\omega_g\otimes\omega_h\otimes t_{r+2}$. 

\medskip
The following theorem is the main result of this work.

\begin{teo}[cf. Theorem \ref{explicirreclawconj}]
\label{erlintroteo}
Let $(f,g,h)$ be a triple of modular forms satisfying Assumption \ref{introass} and the conditions ($f$-ord) and (SC) as above. If the triple $(f,g,h)$ is moreover $(F,1-T)$-convenient in the sense of Definition \ref{expconvtriple} for some finite extension $F/\Q_p$, then
\[
\log_\BK^{fgh}(\kappa(f,g,h))(\eta^{\varphi=a_p}_f\otimes\omega_g\otimes\omega_h\otimes t_{r+2})
\]
is equal to
\[
(-1)^{k-2}(r-k+2)!\cdot a_1\big(e_{\breve{f}}(\Tr_{Mp^t/M_1p^t}(g\times d^{(k-l-m)/2}h))\big)\,.
\]
\end{teo}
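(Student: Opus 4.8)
The plan is to establish the reciprocity law by computing the Bloch–Kato logarithm of $\kappa(f,g,h)$ via its syntomic (or $p$-adic Abel–Jacobi) incarnation. First I would invoke the comparison, recalled in sections \ref{padicHTchapter} and \ref{syntomicabeljacobichapt}, that identifies $\log_\BK^{fgh}(\kappa(f,g,h))$ with the image of the generalized Gross–Kudla–Schoen diagonal cycle under a syntomic Abel–Jacobi map, paired against the de Rham class $\eta^{\varphi=a_p}_f\otimes\omega_g\otimes\omega_h\otimes t_{r+2}$. The key point is that this syntomic regulator can be unfolded: pushing forward along the diagonal $d_t\colon Y_t\hookrightarrow Y_t^3$ turns the pairing into an integral over the modular curve $Y_t$ of a product of a differential attached to $f$, the form $g$, and a $p$-depleted/coleman-primitive version of $h$. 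Here the fact that $g$ and $h$ lie in the kernel of $U_p$ (assumption (SC)) is exactly what makes the relevant overconvergent primitive of $h$ reduce to the classical non-holomorphic expression $d^{(k-l-m)/2}h$, where $d = q\,\tfrac{d}{dq}$; the negative power of $d$ is interpreted via Coleman's $p$-adic integration, which is available precisely because $h$ is in the kernel of $U_p$ so no $p$-stabilization obstruction appears.

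Next I would carry out the local computation at $p$ of the class $\eta^{\varphi=a_p}_f$: because $f$ is the ordinary $p$-stabilization of a newform (condition ($f$-ord)), its Deligne representation restricted to $G_{\Q_p}$ has a canonical $\varphi$-eigenvector with eigenvalue $a_p = a_p(f)$, and one must relate $\eta^{\varphi=a_p}_f$ to the class $\omega_f$ of the newform. This is where the ordinary projector $e_{\breve f}$ (the projector onto the $\breve f$-isotypic part, $\breve f$ being the dual/complementary eigenform) and the trace map $\Tr_{Mp^t/M_1p^t}$ enter: the de Rham cohomology of $X_t$ decomposes according to eigensystems, and projecting the integrand to the $f$-component while descending the level from $Mp^t$ to $M_1p^t$ produces exactly the operator $e_{\breve f}\circ \Tr_{Mp^t/M_1p^t}$ applied to $g\times d^{(k-l-m)/2}h$. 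The constant $(-1)^{k-2}(r-k+2)!$ should emerge from two sources: the factorial from the explicit action of the repeated $\theta$-operator (or the combinatorics of $\Sym$-contractions in the definition of $\DET_{\mathbf r}$), and the sign from the orientation/duality conventions in identifying $V^*_\dR$ with $V_\dR$ and in the cup product pairing.

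Concretely the steps, in order, are: (1) rewrite $\log_\BK^{fgh}(\kappa(f,g,h))$ as a syntomic Abel–Jacobi pairing using the results of section \ref{syntomicabeljacobichapt}; (2) unfold the diagonal pushforward to an integral over $Y_t$ of $\tilde\eta_f \cdot g \cdot F_h$, where $F_h$ is the Coleman primitive of the $p$-depletion of $h$; (3) use assumption (SC) to identify $F_h$ with $d^{(k-l-m)/2}h$ and to remove any Euler-factor corrections; (4) project onto the $f$-isotypic component, producing $e_{\breve f}$ and the trace to level $M_1p^t$, and extract the leading Fourier coefficient $a_1(-)$ from the residue/duality pairing on $H^1_\dR$; (5) track the constant through the $\DET_{\mathbf r}$ combinatorics to recover $(-1)^{k-2}(r-k+2)!$. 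I expect the main obstacle to be step (3) together with the local analysis underlying step (2): when $g$ and $h$ are supercuspidal the local Galois representation is no longer a successive extension of characters, so the usual "ordinary" unfolding of the Abel–Jacobi map does not directly apply, and one must argue — presumably using that $U_p g = U_p h = 0$ forces the relevant Frobenius to act nilpotently on the pertinent graded pieces — that the syntomic regulator still collapses to the same Coleman-integral formula as in the principal-series case of \cite{DR2017} and \cite{BSV2020a}, with the extension-of-scalars to the field where crystallinity holds not affecting the final value. Verifying that the Bloch–Kato logarithm is well-defined (the $k\geq 3$ hypothesis) and compatible with this extension, so that the functional on $\Fil^0(V^*_\dR)$ is genuinely given by the stated formula, will require care with the weight filtration and the monodromy operator $N$ studied in section \ref{studyphiNsection}.
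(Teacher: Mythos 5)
Your overall road map — pass to the syntomic Abel--Jacobi pairing, apply the projection formula to collapse the diagonal pushforward, restrict to the ordinary locus of the modular curve, substitute the Coleman primitive of $h$, and project to the $f$-isotypic part — matches the paper's argument closely. The one place where your reasoning goes genuinely wrong is the mechanism you assign to hypothesis (SC). You write that $U_pg=U_ph=0$ ``forces the relevant Frobenius to act nilpotently on the pertinent graded pieces.'' This is false: in the supercuspidal case the associated filtered $(\varphi,N,G_{\Q_p},L)$-module is crystalline after restriction to a finite (ramified) extension $F$ of $\Q_p$ with $N=0$, and $\varphi$ acts on $D_\xi$ with \emph{nonzero} eigenvalues $\mu_\xi$ of $p$-adic valuation $(1-\nu)/2$ (remark~\ref{phigammagh}). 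Nilpotence of $\varphi$ is not what simplifies the computation, and assuming it would derail the eigenvalue analysis you need in order to show, for instance, that $\eta^{\varphi=a_p}_{f'}$ restricts to zero on the wide open $\W_0(p^t)$ (lemma~\ref{restrictoWinftylemma}), which is proved by comparing the $\Phi$-eigenvalue $a_p(f)^d$ with the $\Phi_0$-eigenvalue $\beta_p(f)^d$.

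The actual role of (SC) is more elementary and more decisive: it guarantees that the $q$-expansions of $g$ and $h'$ are already $p$-depleted. This is what makes remark~\ref{Ukillspdepleted} bite: on $\W_\infty(p^t)$ one has $U_pV_p=V_pU_p=1$, hence $U_p$ annihilates $p$-depleted sections, hence after applying $e_\ord$ the offending terms (in particular $y_{gh'}$, and several summands in $w_{gh'}$) drop out, and the Coleman primitive $F^\flat_\xi$ pairs directly with $\omega_g\otimes t_{l-1}$ to produce $g\times d^{(k-l-m)/2}h'$ with no residual $p$-depletion needed. The Euler factor $P_{gh'}(\varphi_\infty)/P_{fgh}(p^{-1})$ then cancels because $\varphi_\infty$ acts on $\omega_{\breve f}\otimes t_{r+2}$ by the scalar $p^{-2-r}\chi_f(p)^{-1}\beta_p(f)$, not because of any collapse of the Coleman integral. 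This is precisely what separates the (SC) formula from the (PS) formula of theorem~\ref{erlintroteoPS}, where $g$, $h$ are not $p$-depleted and the leftover factor $\bigl(1-\beta_f\alpha_g\alpha_h/p^{r+2}\bigr)^{-1}$ survives. You should also be aware that the heavy lifting in moving from step (1) to step (2) is carried by the semistable Hyodo--Kato/syntomic machinery over the ramified base $\OO_F$ (sections~\ref{materialonsynfp}--\ref{abeljacobisyn}) together with the degeneration of the spectral sequence~\eqref{syntomictostspectral}; treating this as a black box ``collapse to the same Coleman-integral formula as in the principal-series case'' underestimates where the $k>2$ hypothesis is used (it is what forces $D^0=D^2=0$ in lemma~\ref{degenklarge}, making the syntomic lift unique).
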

The notation goes as follows. We let $a_1(\xi)$ denote the first Fourier coefficient of the $q$-expansion at $\infty$ of a modular form $\xi$. The modular form $\breve{f}$ is the normalized eigenform which is a scalar multiple of $w_{M_1}(f)$ (where $w_{M_1}$ is a suitably defined Atkin--Lehner operator) and $e_{\breve{f}}$ denotes $\breve{f}$-isotypical projection.

Finally, $d$ denotes Serre's derivative operator, which acts as $q\frac{d}{dq}$ on $q$-expansions. Note that for a negative integer $t$ (here $(k-l-m)/2<0$ since the triple of weights $(k,l,m)$ is balanced), we define $d^t$ as the $p$-adic limit of the operators $d^{t+(p-1)p^m}$ for $m\to +\infty$. Then $d^t$ is an operator sending $p$-adic modular forms of weight $\nu$ to $p$-adic modular forms of weight $\nu+2t$. In particular one can interpret $g\times d^{(k-l-m)/2}h$ as a $p$-adic modular form of weight $k$. Since the operator $e_{\breve{f}}$ includes an ordinary projection, Hida's classicality theorem shows that
\[
e_{\breve{f}}(\Tr_{Mp^t/M_1p^t}(g\times d^{(k-l-m)/2}h))
\]
is given by the $q$-expansion of a classical modular form of weight $k$ and level $M_1 p^t$.

\medskip
The condition that $(f,g,h)$ is $(F,1-T)$-convenient is not too restrictive. It means that the representation $V(f,g,h)$, seen as a representation of $\Gal(\bar{\Q}_p/\Q_p)$, becomes crystalline over $F$ and that the action of the $d$-th power of crystalline Frobenius on $\D_{\cris,F}(V(f,g,h))$ does not admit $1$ or $p^{-d}$ as eigenvalues, where $p^d$ is the cardinality of the residue field of $F$. In the setting of Theorem \ref{erlintroteo}, it is always satisfied if the weight $k$ of $f$ is at least $3$ (see Proposition \ref{efgequal}).

\subsection{Comparison with the existing literature and proof strategy}
\label{introcomparison}
\begin{rmk}
Note that the explicit reciprocity laws proven in \cite{DR2017} and \cite{BSV2020a} always require some finite slope (or ordinarity) assumption also on $g$ and $h$. To the author's knowledge, the case of $g$ and $h$ supercuspidal at $p$ has not been addressed in the literature so far.    
\end{rmk}

\begin{rmk}
In Theorem \ref{explicirreclawconj}, we actually just assume that the eigenforms $g$ and $h$ are $p$-depleted (always assuming that the triple $(f,g,h)$ is $(F,1-T)$-convenient in the sense described above). In particular, up to passing to $p$-depletions, we obtain an explicit reciprocity law which holds without any assumption on the local behaviour of the forms $g$ and $h$. This is again the first example appearing in the literature of such a result, as far as we are aware of. 
\end{rmk}

\medskip
The proof of Theorem \ref{erlintroteo}, developed in Section \ref{explicitreciprocitychapt}, follows closely the steps of the proof of Theorem A in \cite{BSV2020a} (namely a $p$-adic explicit reciprocity law for triples of eigenforms of level coprime to the fixed prime $p$). However, in our setting we have to face some further difficulties. In particular, since the forms $g$ and $h$ inevitably have non-trivial level at $p$, we are forced to work over modular curves (or products of such) that do not have good reduction at $p$ and only admit a semistable model over the ring of integers of a finite (typically ramified) extension of $\Q_p$. In this setting, cohomology theories such as Hyodo--Kato cohomology and syntomic cohomology - that usually allow this kind of computations - are more complicated to handle and we have to take these drawbacks into account.

\medskip
The work of Nekov\'{a}\v{r}--Nizio\l{} (\cite{NN2016})) produces a crystalline Hyodo--Kato cohomology with trivial coefficients for general varieties which compares with \'{e}tale cohomology. In our setting, this theory can be applied to the compactified Kuga--Sato varieties by cutting out a piece of the cohomology which corresponds to the motive of the modular forms of the weight of interest (or tensor product of them). On the \'{e}tale side, the existence of a theory with coefficients and of Leray spectral sequences allows to connect this pieces of the cohomology to the cohomology of the modular curve with values in \'{e}tale automorphic coefficients. For the proof of the explicit reciprocity laws, one typically needs to move to the rigid analytic (overconvergent) setting and then restrict the computation to suitable rigid (dagger) subspaces of the relevant varieties (namely ordinary loci). A rigid version of Hyodo--Kato and syntomic cohomology in the semistable case with coefficients has been developed in \cite{EY2021}, \cite{EY2024}, \cite{EY2025}, and \cite{Yam2025}. Also, a relationship with the theory of Nekov\'{a}\v{r}--Nizio\l{} has been established in the case of
trivial coefficients in \cite{EY2021}, as well as a relationship with the Berthelot's rigid cohomology, when smooth models exist, in \cite{EY2024}. 

\medskip
On the other hand, in the case of automorphic coefficients for modular curves (for which semistable models exist), it is not clear whether one can compare the theory of Ertl--Yamada with the theory of Nekov\'{a}\v{r}--Nizio\l{} (the latter being employed as explained above, i.e., cutting out pieces of the cohomology of Kuga--Sato varieties with trivial coefficients). This is the issue that hat had prevented us from obtaining an unconditional proof of Theorem \ref{erlintroteo} in the case of balanced triples of weights $(k,l,m)\neq (2,2,2)$ employing the formalism developed in the aforementioned works. We could only obtain a proof after postulating a reasonable behaviour of rigid syntomic cohomology with coefficients, cf. Assumption \ref{conditional}.

\medskip
As observed by one of the referees, a possibility to circumvent this problem would be to grant the existence of semistable integral models for the compactification of the Kuga--Sato varieties of arbitrary level restricting to smooth models on the ordinary loci. In this case, it would be possible to establish a relationship between the Hyodo--Kato cohomology of Nekov\'{a}\v{r}--Nizio\l{} and that of Ertl--Yamada and, moreover, to compare it with Berthelot's rigid cohomology. Furthermore, using suitable rigid analytic Leray spectral sequences, one could reduce the problem to a computation over ordinari loci of modular curves with rigid analytic coefficients. Unfortunately, obtaining these semistable models in the required generality seems out of reach at the moment.

\medskip
After that this article was completed, the preprint \cite{ABSV2025} appeared. It introduces a suitable formalism of crystalline syntomic cohomology with coefficients which compares with the \'{e}tale chomology and allows to bypass the problems mentioned above, since one only needs semistable models for compactified modular curves (rather than for compactified Kuga--Sato varieties), thus allowing us to upgrade Theorem \ref{erlintroteo} to cover general balanced triples of weights $(k,l,m)$. We refer to Section \ref{thegeneralcasesubsection} for more details. 

\medskip
For completeness, let us also mention that another candidate for crystalline Hyodo--Kato coefficients has been defined in \cite{DN2018} but, as explained by the authors in the introduction of loc. cit., a relationship between these coefficients and those of rigid analytic nature has not been proved.

\subsection{An overview on future applications}
We would like to close this introduction by underlining the close link between the generalized triple product $p$-adic $L$-function $\mathscr{L}^f_p(\Hf,\Hg,\Hh)$ studied in \cite{Mar2026} and the explicit reciprocity law of Theorem \ref{erlintroteo}. Indeed, when $\Hg$ and $\Hh$ are generalized $p$-adic families admitting classical specializations which are supercuspidal at $p$ (and $p$-depleted) - as it happens in the case of families of theta series of infinite $p$-slope described in \cite[Section 4]{Mar2026} - we have that, essentially by construction,
\[
\mathscr{L}^f_p(\Hf,\Hg,\Hh)(w)=\gamma_{\Hf_{\!x}}\cdot a_1\big(e_{\breve{\Hf\,}_{\!x}}(\Tr_{Mp^t/M_1p^t}(\Hg_y\times d^{(k-l-m)/2}\Hh_z))\big)\,.
\]
for every balanced triple of meaningful weights $w=(x,y,z)$, where $\gamma_{\Hf_{\!x}}$ denotes the specialization at $x$ of the congruence number $\gamma_{\Hf}$ of the Hida family $\Hf$ (see for instance the discussion in \cite[Section 3.3]{Hsi2021}). Assuming that $(\Hf_{\!x},\Hg_y,\Hh_z)$ satisfies the self-duality condition of Assumption \ref{introass} (ii) above, we deduce immediately the equality
\begin{equation}
\label{finalintroformula}
\mathscr{L}^f_p(\Hf,\Hg,\Hh)(w)=\frac{(-1)^{k-2}\cdot \gamma_{\Hf_{\!x}}}{(r-k+2)!}\cdot 
\log_\BK^{fgh}(\kappa(\Hf_{\!x},\Hg_y,\Hh_z))(\eta^{\varphi=a_p}_{\Hf_{\!x}}\otimes\omega_{\Hg_y}\otimes\omega_{\Hh_z}\otimes t_{r+2})
\end{equation}
Such a result is consistent with the fact that one expects to interpolate $p$-adically in a meaningful way the objects appearing in the RHS of formula \eqref{finalintroformula}, in particular the classes $\kappa(\Hf_{\!x},\Hg_y,\Hh_z)$. Following \cite{DR2017} and \cite{BSVast1}, there should exist a \emph{big} diagonal class $\kappa(\Hf,\Hg,\Hh)$ interpolating $p$-adically the diagonal classes $\kappa(\Hf_{\!x},\Hg_y,\Hh_z)$. 

\medskip
Motivated by results of \cite{BSVast2}, one expects to provide a relation between the specialization at $(2,1,1)$ of $\kappa(\Hf,\Hg,\Hh)$ (or a suitable improvement) to certain $p$-adic derivatives of $\mathscr{L}_p^f(\Hf,\Hg,\Hh)$ evaluated at $(2,1,1)$. In the arithmetic setting described in \cite[Section 6]{Mar2026}, this result would provide a link between Heegner points and ($p$-adic limits of) diagonal classes. 

\medskip
The study of the case in which $\Hg$ and $\Hh$ are $p$-adic families of theta series of infinite $p$-slope (coming from Hecke characters of a imaginary quadratic field where $p$ is inert) should also provide applications of Theorem \ref{erlintroteo} to the anticyclotomic Iwasawa theory of modular forms at inert primes. In this case less is known compared to the case where $p$ splits in relevant imaginary quadratic field (and the corresponding $p$-adic families of theta series are classical Hida families). In the preprint \cite{CD2023}, the authors use diagonal classes to study the anticyclotomic theory of modular forms in the $p$-split case. Theorem \ref{erlintroteo} should be an important step towards a generalization of some of the constructions and results of \cite{CD2023} to the $p$-inert setting.

\medskip
The author will investigate these sort of questions in future works.

\subsection*{Notation and conventions}
Throughout this work, we fix an algebraic closure $\bar{\Q}$ of $\Q$, an algebraic closure $\bar{\Q}_p$ of $\Q_p$ (for the fixed prime $p$) together with an embedding $\iota_p:\bar{\Q}\hookrightarrow\bar{\Q}_p$ extending the canonical inclusion $\Q\hookrightarrow\Q_p$. All algebraic extensions of $\Q$ (resp. $\Q_p$) are viewed inside the corresponding fixed algebraic closures. We extend the $p$-adic absolute value $|\cdot|_p$ on $\Q_p$ (normalized so that $|p|_p=1/p$) to $\bar{\Q}_p$ in the unique possible way. We denote by $\C_p$ the completion of $\bar{\Q}_p$ with respect to this absolute value. It is well-known that $\C_p$ is itself algebraically closed.
We also fix an embedding $\iota_{\infty}:\bar{\Q}\hookrightarrow\C$ extending the canonical inclusion $\Q\hookrightarrow\C$ and we often omit the embeddings $\iota_p$ and $\iota_\infty$ from the notation. 

\medskip
If $F$ is any field, we denote by $G_F$ the absolute Galois group of $F$ (defined after fixing a suitable separable closure) and we denote $F^{ab}$ the maximal abelian extension of $F$ (inside such a separable closure).

\medskip
Given a smooth function $f$ on the upper-half plane $\HH:=\{ \tau\in\C\mid \Imm(\tau)>0\}$ and $\omega=\begin{psmallmatrix}a & b\\ c & d\end{psmallmatrix}\in\GL_2(\R)^+$ (invertible $2\times 2$ matrices with positive determinant) and $k\in\Z$, we set
\[
f|_k\omega(\tau):=\det(\omega)^{k/2}\cdot (c\tau +d)^{-k}\cdot f\big(\tfrac{a\tau+b}{c\tau+d}\big)\qquad \tau\in\HH
\]

\medskip
If $\Gamma\subseteq\SL_2(\Z)$ is a congruence subgroup and $k\in\Z_{\geq 1}$, we let $M_k(\Gamma)$ (resp. $S_k(\Gamma)$) be the $\C$-vector space of (holomorphic) modular forms (resp. cusp forms) of weight $k$ and level $\Gamma$. For $\Gamma=\Gamma_1(N)$ for some $N\geq 1$ and $\chi$ a Dirichlet character modulo $N$, we let $M_k(N,\chi)$ (resp. $S_k(N,\chi)$) denote the spaces of modular forms (resp. cusp forms) of weight $k$, level $\Gamma_1(N)$ and nebentypus $\chi$. Unless otherwise specified, we refer to \cite{Miy1989} for all the basic facts concerning the analytic theory of modular forms which are mentioned freely without proof.

\medskip
We refer to the notes \cite{BC2009} for the basic facts concerning $p$-adic Hodge theory and for the definition of Fontaine's period rings $\B_\dR$, $\B_\cris$ and $\B_\st$.

\subsection*{Acknowledgements}
This work originates from the author's PhD thesis written under the supervision of Massimo Bertolini. We would like to thank him for suggesting this project and for his precious advice. We thank the anonymous referees for their corrections, comments and suggestions that led to a significant improvement of the paper. 
The author was funded by the DFG Graduiertenkolleg 2553 during his doctoral studies at Universität Duisburg-Essen (while writing the first version of this work) and by the Simons Collaboration on Perfection in Algebra, Geometry and Topology as postdoctoral researcher at CNRS (during most of the revision process). He is currently funded by the Progetto di Eccellenza U-GOV DECC23\_012\_AC as a postdoctoral researcher at Università degli Studi di Milano.


\section{Balanced diagonal classes and the étale Abel--Jacobi map}
\label{etaleAbelJacobichapt}
In this section we introduce the facts about étale and de Rham cohomology of modular curves with $p$-adic coefficients which are needed to define the balanced diagonal classes and the étale Abel--Jacobi map alluded to in the title above. Most of the material is covered in more detail in \cite[Section 1]{BDP2013} and/or in \cite[Sections 2 and 3]{BSVast1} and in the references given therein.

\subsection{De Rham cohomology of modular curves}
\label{derhametale}
For every integer $N\geq 5$, we let $Y_1(N)$ denote the \emph{open} modular curve of level $\Gamma_1(N)$, defined over $\Z[1/N]$, classifying isomorphism classes of pairs $(E,P)$ where $E$ is a family of elliptic curves over a $\Z[1/N]$-scheme $S$ and $P\in E(S)$ is a section of exact order $N$. The curve $Y_1(N)$ is affine and smooth over $\Z[1/N]$. The universal elliptic curve arising from this moduli problem will be denoted $u_1(N): \Es_1(N)\to Y_1(N)$.

\medskip
As explained in \cite[Chapter 8]{KM1985}, the normalization of the projective $j$-line in $Y_1(N)$ is a smooth projective curve over $\Z[1/N]$, usually denoted by $X_1(N)$. The curve $X_1(N)$ contains $Y_1(N)$ as an open subscheme and the complement $C_1(N)$ of $Y_1(N)$ in $X_1(N)$, with the reduced subscheme structure, is finite and étale over $\Z[1/N]$. It is usually called the subscheme of cusps. Indeed, over $\Z[1/N,\zeta_N]$ (where $\zeta_N$ is a fixed primitive $N$-th root of unity in $\bar{\Q}$), the scheme $C_1(N)$ is simply given by a disjoint union of points (usually one refers to those as the cusps).

\medskip
For any $\Z[1/N]$-algebra $R$, we let $Y_1(N)_R$ (resp. $X_1(N)_R)$, $C_1(N)_R$) denote the base change of $Y_1(N)$ (resp. $X_1(N)$, $C_1(N)$) to $R$.

\medskip
In what follows, $F$ will be a field of characteristic zero and we write $Y:=Y_1(N)_F$, $X:=X_1(N)_F$, $C:=C_1(N)_F$. We let, moreover, $u:\mathscr{E}\to Y$ denote the universal elliptic curve (over $F$).

\medskip
The notation $(\Tate(q),P_\Tate)_{/F(\!(q^{1/d})\!)}$ will denote the Tate elliptic curve $\mathbb{G}_m/q^\Z$, with a $\Gamma_1(N)$-level structure $P_\Tate$ defined over $F[\zeta_N](\!(q^{1/d})\!)$ for some $d\mid N$. We let $\omega_\can$ denote the canonical differential $\omega_\can:= d T/T$ over $F(\!(q)\!)$, where $T$ is the parameter on $\mathbb{G}_m$. The level structure $P_\Tate$ corresponds (over $F[\zeta_N](\!(q^{1/d})\!)$) to a point of order $N$ on $\Tate(q)$, i.e., a point of the form $\zeta_N^i q^{j/d}$ for some $i\in\{1,\dots,N-1\}$ and $j\in\{1,\dots,d-1\}$ with $(i,N)=1$ and $(j,d)=1$.

\medskip
We let $\omega:=u_*\Omega^1_{\Es/Y}$ be the line bundle of relative differentials on $\Es/Y$ and we let $\Hs=R^1 u_*(\Omega^\bullet_{\Es/Y})$ be the relative de Rham cohomology sheaf on $Y$. The latter is a rank $2$ vector bundle over $Y$, equipped with Hodge filtration
\begin{equation}
\label{Hodgefiltration}
0\to\omega\to\Hs\to\omega^{-1}\to 0\,.  
\end{equation}
The vector bundle $\Hs$ is also equipped with the so-called Gauss--Manin connection
\begin{equation}
\nabla: \Hs\to\Hs\otimes\Omega^1_Y\,.    
\end{equation}
The Kodaira--Spencer map is the composite
\begin{equation}
KS:\omega\to\Hs\xrightarrow{\nabla}\Hs\otimes\Omega^1_Y\to\omega^{-1}\otimes\Omega^1_Y\, 
\end{equation}
induced by the Gauss--Manin connection and the Hodge filtration. It turns out that $KS$ is an $\OO_Y$-linear isomorphism of sheaves, giving rise to an identification $\omega^2\cong\Omega^1_Y$ of line bundles on $Y$.

\medskip
One can extend $\omega$ and $\Hs$ to sheaves on the projective curve $X$, which will be again denoted (resp.) $\omega$ and $\Hs$. One way to proceed is to view $X$ as moduli space for a suitable moduli problem involving generalised elliptic curves. Since these constructions are well-known and appear often in the literature, we introduce only the facts needed in this work.

\medskip
The sheaf $\omega$ on $X$ is essentially characterised by the fact that for $F=\C$ there is an identification $H^0(X,\omega^k)=M_k(\Gamma_1(N))$ (where $M_k(\Gamma_1(N))$ denotes the $\C$-vector space of holomorphic modular forms of level $\Gamma_1(N)$). The local sections of $\omega$ in the (formal) neighbourhood over $\Spec(F[\zeta_N][\![q^{1/d}]\!])$ of the cusp attached to the pair $(\Tate(q),\zeta_N q^{1/d})$ are expressions of the form $h\cdot\omega_\can$ with $h\in F[\zeta_N][\![q^{1/d}]\!]$ and $\omega_\can$ the canonical differential on the Tate curve.

\medskip
The extension of $\Hs$ is then determined by the extension of $\omega$ and the Hodge filtration \eqref{Hodgefiltration}. The Gauss--Manin connection $\nabla$ extends to a connection with logarithmic poles at the cusps
\begin{equation}
\label{GaussManinX}
 \nabla:\Hs\to\Hs\otimes\Omega^1_X\langle C\rangle \,.  
\end{equation}

\medskip
The local sections of $\Hs$ in a neighbourhood of $(\Tate(q),\zeta_N q^{1/d})$ are $F[\zeta_N][\![q^{1/d}]\!]$-linear combinations of $\omega_\can$ and the local section $\eta_\can$, which is defined by the equation
\begin{equation}
\label{etacan}
\nabla \omega_\can = \eta_\can\otimes \frac{dq}{q}
\end{equation}
Over $\Spec(F[\zeta_N][\![q^{1/d}]\!])$, the Gauss--Manin connection is completely determined by the above equation and by $\nabla \eta_\can =0$.

\medskip
The Kodaira--Spencer map gives rise to an isomorphism $\sigma:\omega^2\xrightarrow{\cong}\Omega^1_X\langle C\rangle$, which over $\Spec(F[\zeta_N][\![q^{1/d}]\!])$ is simply described by $\sigma(\omega_\can^2)=\tfrac{dq}{q}$.

\medskip
A possible definition of modular forms of weight $k\geq 2$ and level $\Gamma_1(N)$ with $F$-coefficients is then given by
\[
M_k(\Gamma_1(N),F):=H^0(X,\omega^k)=H^0(X,\omega^{k-2}\otimes\Omega^1_X\langle C\rangle)
\]
with subspace of cusp forms defined as
\[
S_k(\Gamma_1(N),F):=H^0(X,\omega^{k-2}\otimes\Omega^1_X)\,.
\]

\medskip
We now let, for $r\geq 1$, $\Hs_r:=\Sym^r(\Hs)$ (which we view as a sheaf on $Y$ or $X$, depending on the context). The sheaf $\Hs_r$ is endowed with a Hodge filtration 
\[
\Hs_r\supset\Hs_{r-1}\otimes\omega\supset\cdots\supset\omega^r
\]
induced by the filtration \eqref{Hodgefiltration}. We will set $\Fil^i\Hs_r:=\Hs_{r-i}\otimes\omega^i$ for $i=0,\dots,r$.

\medskip
The Gauss--Manin connection \eqref{GaussManinX} extends to a connection $\nabla_r:\Hs_r\to\Hs_r\otimes\Omega^1_X\langle C\rangle$ which satisfies Griffiths transversality
\[
\nabla(\Fil^{i+1}\Hs_r)\subseteq\Fil^i\Hs_r\otimes\Omega^1_X\langle C\rangle\qquad (0\leq i\leq r-1)
\]
and induces isomorphisms on the graded pieces
\[
\mathrm{Gr}^{i+1}\Hs_r\cong\mathrm{Gr}^i\Hs_r\otimes\Omega^1_X\langle C\rangle\qquad (0\leq i\leq r-1)\,.
\]

\medskip
We let $\Ls:=\mathcal{H}om_{\OO_X}(\Hs,\OO_X)$ be the dual of $\Hs$ and for $r\geq 1$ we set $\Ls_r:=\mathrm{Tsym}^r(\Ls)$ (the notation $\mathrm{Tsym}$ refers to the submodule of symmetric tensors). We use the same notations for the restrictions of these sheaves to $Y$. Also $\Ls$ (and consequently $\Ls_r$) is equipped with an induced Hodge filtration and integrable connection. For $\mathscr{F}\in\{\Hs,\Ls\}$ we finally define the de Rham cohomology groups
\begin{equation}
H^i_\dR(X,\mathscr{F}_r):=\mathbb{H}^i(X,\mathscr{F}_r\xrightarrow{\nabla_r}\mathscr{F}_r\otimes\Omega^1_X\langle C\rangle)\,,
\end{equation}
where $\mathbb{H}$ denotes hypercohomology. These de Rham cohomology groups are naturally endowed with a two-step descending filtration. In particular, one has an identification
\[
\Fil^i H^1_\dR(X,\Hs_r)\otimes_F F[\zeta_N]= M_{r+2}(\Gamma_1(N),F)\otimes_F F[\zeta_N]\qquad i=1,\dots ,r+1\,.
\]
Since we are working with connections with logarithmic poles at the cusps, the de Rham cohomology groups defined above can actually be defined on the open modular curve $Y$, i.e., there is a natural isomorphism of filtered $F$-vector spaces
\begin{equation}
\label{XYdeRhamiso}
    H^i_\dR(X,\mathscr{F}_r)\cong H^i_\dR(Y,\mathscr{F}_r|_Y)\,.
\end{equation}

One can similarly define de Rham cohomology with compact support $H^i_{
\dR,c}(Y,\mathscr{F}_r)$.

\medskip
Over $Y$, there is a perfect pairing (essentially coming from Poincaré duality on the universal elliptic curve $\mathscr{E}$)
\[
(~,~):\Hs\otimes_{\OO_Y}\Hs\to\OO_Y[-1]\,.
\]
Here $\OO_Y[n]$ is the sheaf $\OO_Y$, with trivial connection and shifted filtration (i.e., $\Fil^j\OO_Y[n]=\OO_Y$ if $j\leq -n$ and $\Fil^j\OO_Y[n]=0$ if $j\geq 1-n$). Such a pairing induces a perfect duality
\begin{equation}
\label{deRhamdualsheaf}
    (~,~)_r:\Hs_r\otimes_{\OO_Y}\Hs_r\to \OO_Y[-r]\,,
\end{equation}
given on generic fibers by the rule
\[
(\alpha_1\,\cdots\,\alpha_r,\beta_1\,\cdots\,\beta_r)_r=\frac{1}{r!}\Sum_{\sigma\in\Sym(r)}(\alpha_1,\beta_{\sigma 1})\,\cdots \,(\alpha_r,\beta_{\sigma r}),
\]
where $\Sym(r)$ denotes the symmetric group on $r$ symbols.

At the level of de Rham cohomology this induces a perfect duality
\begin{equation}
\label{deRhamdualcoh}
    (~,~)_{\dR, Y, r}: H^1_\dR(Y,\Hs_r)\otimes_F H^1_{\dR,c}(Y,\Hs_r)\to H^2_{\dR,c}(Y,\OO_Y[-r])\cong F[-r-1]\,.
\end{equation}

\subsection{Étale cohomology of modular curves}
\label{etalemodularcurves}
The sheaves $\Hs_r$ (resp. $\Ls_r$) admit (Kummer) pro-étale versions. One can work in the more classical setting of \cite[\S 12]{FK1988} and obtain locally constant $p$-adic sheaves $\Hs_r$ (resp. $\Ls_r$) on $Y_1(N)$, so that it makes sense to study the cohomology groups $H^j_\et(Y_1(N)_R,\mathscr{\Hs}_r)$ (resp. $H^j_\et(Y_1(N)_R,\mathscr{\Ls}_r)$) for any $\Q$-algebra $R$ (cf. \cite[Section 2.3]{BSVast1} and the references therein). In this section, we prefer to focus on the rigid analytic (or better adic) setting and to adopt the more modern approach of \cite{Sch2013} (and its various generalizations, for instance \cite{DLLZ2023}).

\medskip
In this section we denote by $F$ a complete discretely valued field of mixed characteristic $(0,p)$ with perfect residue field and fixed algebraic closure $\bar{F}$.

\begin{nota}
For a rigid analytic variety $S$ over $F$, we write $\hat{\Z}_{p,S}$ (resp. $\hat{\Q}_{p,S}$) to denote the constant sheaf on $S_\proet$ associated to $\Z_p$ (resp. $\Q_p$) and for a $\Z_p$-local system $\mathbb{L}$ (resp. $\Q_p$-local
system) on $S_\et$, we let $\widehat{\mathbb{L}}$ to denote the lisse $\hat{\Z}_{p,S}$-module (resp. $\hat{\Q}_{p,S}$-module) on $S_\proet$ associated with $\mathbb{L}$ (cf. the discussion in \cite[Section 8.2]{Sch2013}). We let $H^j_\et(S,\mathbb{L})$ (resp. $H^j_\et(S_{\bar{F}},\mathbb{L})$) denote the $p$-adic étale cohomology groups.

For $A$ a complete subring of $\C_p$ and $\FF$ a $\hat{\Z}_{p,S}$-module, we also set $\FF_{\!A}:=\FF\otimes_{\hat{\Z}_{p,S}}\hat{A}_S$, where $\hat{A}_S$ is the constant pro-étale sheaf attached to $A$.
\end{nota}

\begin{rmk}
\label{etaleproetalesystems}
According to \cite[Proposition 8.2]{Sch2013}, the association $\mathbb{L}\to\widehat{\mathbb{L}}$ defines an equivalence between the category of $\Z_p$-local systems on $S_\et$ and the category of lisse $\hat{\Z}_{p,S}$-modules on $S_\proet$.    
\end{rmk}

In what follows we view the algebraic varieties $Y,X,\Es$ introduced in section \ref{derhametale} as rigid analytic varieties over $\Q_p$ (or better locally noetherian adic spaces over $\Spa(\Q_p,\Z_p)$), without changing the notation.

\begin{defi}
We let $\HH^1(\Es):= R^1 u_* \hat{\Z}_{p,\Es}$ to be the first relative pro-étale cohomology sheaf of the family $u:\Es\to Y$. We also let $\TT_p(\Es):= \HH om_{\hat{\Z}_{p,Y}}(\HH^1(\Es),\hat{\Z}_{p,Y})$ denote the relative $p$-adic Tate module of the family $u:\Es\to Y$.    
\end{defi}

The sheaves $\HH^1(\Es)$ and $\TT_p(\Es)$ are (pro-étale versions of) rank $2$ $\Z_p$-local systems. The perfect relative $p$-adic Weil pairing can then be seen as a perfect pairing
\begin{equation}
\label{relativeWeilproet}
    \TT_p(\Es)\otimes_{\hat{\Z}_{p,Y}} \TT_p(\Es)\to \hat{\Z}_{p,Y}(1)\,,
\end{equation}
under which one obtains an isomorphism $ \TT_p(\Es)\cong \HH^1(\Es)(1)$.

Here $\hat{\Z}_{p,Y}(1)$ is the Tate twist of $\hat{\Z}_{p,Y}$ (one can obtain it as usual as $\varprojlim_n \mu_{p^n,_Y}$ with Galois action given by the $p$-adic cyclotomic character). More generally for a lisse $\hat{\Z}_{p,Y}$-module and every $n\in\Z$, we let $\FF(n):=\FF\otimes_{\hat{\Z}_{p,Y}} \hat{\Z}_{p,Y}(n)$ (with the usual conventions on higher Tate twists).

\begin{defi}
For every integer $r\geq 0$, we define $\hat{\Z}_{p,Y}$-modules $\Hs_r:=\Sym^r(\HH^1(\Es))$ and $\Ls_r:=\mathrm{Tsym}^r( \TT_p(\Es))$ on $Y_\proet$.
\end{defi}

Clearly, the relative Weil pairing \eqref{relativeWeilproet} induces for all integers $r\geq 0$ an isomorphism
\begin{equation}
\label{isosr}
    s_r:\Hs_{r,\Q_p}\xrightarrow{\cong}\Ls_{r,\Q_p}(-r)
\end{equation}
(we have to invert $p$ to obtain an isomorphism when we pass to $\Sym^r$ and $\mathrm{Tsym}^r$).

The careful reader will immediately notice the abuse of notation for the symbols $\Hs_r$ and $\Ls_r$. This is justified in the following remark.

\begin{rmk}
Let $S$ be any smooth rigid analytic variety over $\Q_p$ (or any smooth locally noetherian adic space over $\Spa(\Q_p,\Z_p)$). As explained in \cite{Sch2013} (and extended in \cite{DLLZ2023} to the case of not necessarily proper varieties), when a $\Z_p$-local system $\mathbb{L}$ on $S_\et$ is \emph{de Rham} (cf. \cite[Definition 8.3]{Sch2013}, \cite[Theorem 3.9]{LZ2017}), one can attach to it a pair $(\mathcal{E},\nabla)$, where $\mathcal{E}$ is a filtered vector bundle on $S$ and $\nabla:\mathcal{E}\to\mathcal{E}\otimes\Omega^1_S$ is an integrable connection satisfying Griffiths transversality, in such a way that for all integers $j\geq 0$ there is a canonical isomorphism
\begin{equation}
\label{etaledeRhamcomp}
    \B_\dR\otimes_{\Z_p}H^j_\et(S_{\bar{\Q}_p},\mathbb{L})\cong \B_\dR\otimes_{\Q_p}H^j_\dR(S,\mathcal{E})\,,
\end{equation}
implying that
\[
\D_\dR(H^j_\et(S_{\bar{\Q}_p},\mathbb{L})):=H^0\big(\Q_p, \B_\dR\otimes_{\Z_p}H^j_\et(S_{\bar{\Q}_p},\mathbb{L})\big)\cong H^j_\dR(S,\mathcal{E})\,.
\]
Let us mention here two more features of this kind of results.
\begin{itemize}
    \item [(i)] One can extend such a result to a comparison between Kummer-étale cohomology and de Rham cohomology with poles along a divisor, considering the datum of $S$ as above together with a normal crossing divisor $D\subset S$. More precisely one can define a notion of \emph{de Rham} Kummer-étale $\Z_p$-local system on $(S,D)$ and, given such a local system $\mathbb{L}$, one can attach to it a pair $(\mathcal{E},\nabla)$, where $\mathcal{E}$ is a filtered vector bundle on $S$ and $\nabla$ is an integrable log connection (same as \ref{GaussManinX} above) satisfying Griffiths transversality (cf. \cite[Theorem 1.7]{DLLZ2023}). We will need this when $S=X_1(N)_{\Q_p}$ and $D=C_1(N)_{\Q_p}$ (the cusps).

    Moreover, the étale analogue of the isomorphism \ref{XYdeRhamiso} holds, i.e., there are  canonical identifications
    \begin{equation}
        H^i_\ket(X_1(N)_{\bar{\Q}_p}, \mathscr{F}_r(n))\cong H^i_\et(Y_1(N)_{\bar{\Q}_p},\mathscr{F}_r(n))
    \end{equation}
    for $\mathscr{F}\in\{\Hs,\Ls\}$, $r\in\Z_{\geq 0}$ and $n\in\Z$ (note that abusively we do not change the notation for the coefficients on the Kummer étale side).
    \item [(ii)] The whole picture extends also to the case of cohomology with compact support. We refer to \cite{LLZ2023} for this generalization.
\end{itemize}    
It is clear that, under the association $\mathbb{L}\mapsto (\mathcal{E},\nabla)$, the $\hat{\Z}_p$-local system $\HH^1(\Es)$ is sent to $(\Hs,\nabla)$, whence the choice of keeping the same notation for $\Hs_r$ and $\Ls_r$ for $r\in\Z_{\geq 0}$.
\end{rmk}

\medskip
In the sequel we will need the following facts concerning the Hecke action on cohomology groups.
\begin{itemize}
    \item [(i)] For $\mathscr{F}\in\{\Hs,\Ls\}$ one can define the action of Hecke operators $T_\ell$ for primes $\ell\nmid N$ and $U_\ell$ for $\ell\mid N$ and of dual Hecke operators $T'_\ell$ for $\ell\nmid N$ and $U'_\ell$ for $\ell\mid N$ on the cohomology groups $H^j_\et(Y_1(N)_R,\mathscr{F}_r)$ for any $\Q$-algebra $R$. We refer to \cite[Section 2.3]{BSVast1} (where $\Hs$ is denoted $\mathscr{S}$) for the precise definition of these operators (as usual they arise from the suitable Hecke correspondences).
    \item [(ii)]  One can also define, for every unit $d\in(\Z/N\Z)^\times$, a diamond operator $\langle d\rangle$ and a dual diamond operator $\langle d\rangle'$ on $H^j_\et(Y_1(N)_R,\mathscr{F}_r)$ for any $\Q$-algebra $R$. An Atkin--Lehner operator $w_N$ (and its dual $w'_N$) acts on $H^j_\et(Y_1(N)_R,\mathscr{F}_r)$ for any $\Q[\zeta_N]$-algebra $R$, where $\zeta_N$ is a fixed $N$-th root of unity in $\bar{\Q}$. We refer to \cite[Paragraph 2.3.1]{BSVast1} for more details.
    \item [(iii)] The action of Hecke and diamond operators can also be defined on compactly supported cohomology and on the corresponding de Rham cohomology groups.
    \item [(iv)] Assume that $N=N^\circ p^n$ with $p\nmid N^\circ$ and $n\in\Z_{\geq 1}$. We will also need Atkin--Lehner operators $w_{N^\circ}$ and $w_{p^n}$ acting on modular forms and more generally on the cohomology of $Y_1(N^\circ p^n)$ (where $p\nmid N^\circ$). 
    The action of $w_{N^\circ}$ on a cuspidal modular form $\xi\in S_k(\Gamma_1(N^\circ p^n))$ is given by
    \begin{equation}
    \label{wnoperator}
        w_{N^\circ}(\xi):=\langle 1 ; N^\circ\rangle (\xi|_k\,\omega_{N^\circ})\qquad \omega_{N^\circ}:=\omega_{N^{\circ}\!,p^n}:=\begin{pmatrix}N^{\circ} & -1 \\ N^{\circ}p^n c & N^{\circ}d\end{pmatrix}\,,
    \end{equation}
    where we require that $\det(\omega_{N^\circ})=N$ (for $c,d\in\Z$) and the diamond operator $\langle 1; N^\circ\rangle$ is the one corresponding to the unique element of $(\Z/N^\circ p^n)^\times$ which is congruent to $1$ modulo $N^\circ$ and to $N^\circ$ modulo $p^n$. Similarly, we define:
    \begin{equation}
    \label{wptoperator}
        w_{p^n}(\xi):=\langle p^n; 1\rangle(\xi|_k \omega_{p^n})\qquad \omega_{p^n}:=\omega_{p^n\!,N^\circ}:=\begin{pmatrix}p^n & -1 \\ N^\circ p^nc_n & p^nd_n\end{pmatrix}\,,
    \end{equation}
    where we require that $\det(\omega_{p^n})=p^n$ (for $c_n,d_n\in\Z$) and the diamond operator $\langle p^n; 1\rangle$ is the one corresponding to the unique element of $(\Z/N^\circ p^n)^\times$ which is congruent to $1$ modulo $p^n$ and to $p^n$ modulo $N^\circ$. 
    
    The operators $w_{N^\circ}$ and $w_{p^n}$ are the \emph{inverses} of the corresponding operators appearing in \cite{AL1978}. One can also define a geometric version of such operators (and of their duals $w'_{N^\circ}$ and $w'_{p^n}$) on $H^1_\et(Y_1(N)_R,\mathscr{F}_r)$ for $\mathscr{F}\in\{\Hs,\Ls\}$ and for any $\Q[\zeta_{N}]$-algebra $R$ (where $\zeta_{N}$ is a fixed primitive $N$-th root of unity in $\bar{\Q}$). We refer again \cite[Paragraph 2.3.1]{BSVast1} for more details.
    \item [(v)] The isomorphisms $s_r$ and Poincaré duality induce perfect pairings $\langle ~,~\rangle_r$:
    \begin{equation}
    \label{etalepairing}
    \begin{tikzcd}
    & H^1_\et(Y_1(N)_{\bar{\Q}},\Ls_r(1))_{\Q_p}\otimes_{\Q_p} H^1_{\et,c}(Y_1(N)_{\bar{\Q}},\Hs_r)_{\Q_p}\arrow[d, "\text{ $\langle ~,~\rangle_r$}"] \\
    & H^2_{\et,c}(Y_1(N)_{\bar{\Q}},\Z_p(1))_{\Q_p}\cong\Q_p\,.
    \end{tikzcd}    
    \end{equation}
    The operators $T_\ell$ and $T'_\ell$ (resp. $T'_\ell$ and $T_\ell$) are adjoint to each other under the pairing \eqref{etalepairing}. The same applies to the operators $U_\ell$ and $U'_\ell$ for $\ell\mid N$.
    \item [(vi)] After fixing an algebraic embedding $\Q_p\hookrightarrow\C$, the classical Eichler--Shimura isomorphism
    \begin{equation}
    \label{ESisoclass}
        H^1_\et(Y_1(N)_{\bar{\Q}},\Hs_r)\otimes_{\Z_p}\C\cong M_{r+2}(\Gamma_1(N))\oplus\overline{S_{r+2}(\Gamma_1(N))}
    \end{equation}
    commutes with the action of Hecke and diamond operators on both sides. If $N=N^\circ p^n$ with $p\nmid N^\circ$, the isomorphism \ref{ESisoclass} commutes also with the action of $w_{N^\circ}$ and $w_{p^n}$ on both sides.
\end{itemize}

\medskip
From now on in this section $L$ will denote a finite extension of $\Q_p$.

\begin{nota}
\label{Heckeisotypicalcomp}
If $V$ is a $\Q_p$-vector space, we write $V_L:=V\otimes_{\Q_p}L$ and if $M$ is a $\Z_p$-module, we write $M_L:=M\otimes_{\Z_p}L$.

For an $L$-vector space $H$ endowed with an action of the good Hecke operators of level $N$ and for $\xi\in S_\nu(N,\chi,L)$ an eigenform, we let $H[\xi]$ denote the $\xi$-Hecke isotypical component of $H$ (i.e., the maximal $L$-vector subspace where the Hecke operators act with the same eigenvalues as on $\xi$).  
\end{nota}

\begin{defi}
Given $\xi\in S_\nu(N,\chi,L)$ with $\nu\geq 2$ a normalized eigenform, one can attach to it two Galois representations.
\begin{itemize}
    \item [(i)] We let $V_{N}(\xi)$ be the maximal $L$-quotient of $H^1_{\et}(Y_1(N)_{\bar{\Q}},\Ls_{\nu-2}(1))_L$ where the dual good Hecke operators and dual diamond operators act with the same eigenvalues as those of $\xi$.
    \item [(ii)] We let $V^*_{N}(\xi):=H^1_\etc(Y_1(N)_{\bar{\Q}},\Hs_{\nu-2})_L[\xi]$ (cf. Notation \ref{Heckeisotypicalcomp}) be the maximal $L$-submodule of $H^1_\etc(Y_1(N)_{\bar{\Q}},\Hs_{\nu-2})_L$ where good Hecke and diamond operators act with the same eigenvalues as those of $\xi$.
    \end{itemize}
When the level $N$ is understood, we simply write $V(\xi)=V_N(\xi)$ and $V^*(\xi)=V^*_N(\xi)$.
\end{defi}

\begin{rmk}
If $\xi$ is new of level $N$, then $V^*_N(\xi)$ is identified with the $p$-adic Deligne representation associated to $\xi$ and $V_N(\xi)$ is identified with its dual. In general $V_N(\xi)$ (resp. $V_N^*(\xi)$ is non-canonically isomorphic to finitely many copies of $V_{N_\xi}(\xi^{\circ})$ (resp. $V^*_{N_\xi}(\xi^{\circ})$), where $\xi^{\circ}$ is the newform of level dividing $N$ associated to $\xi$ and $N_\xi\mid N$ is the corresponding level.
\end{rmk}

\begin{defi}
For an eigenform $\xi\in S_\nu(N,\chi,L)$ (with $\nu\geq 2$) and $?\in\{*,\emptyset\}$, we set
\[
V^?_{\dR,N}(\xi):=\D_\dR(V^?_{N}(\xi))=H^0(\Q_p,\B_\dR\otimes_L V^?_N(\xi))
\]
and we simply write $V^?_\dR(\xi)$ if the level $N$ is understood.   
\end{defi}

\medskip
The comparison isomorphism \eqref{etaledeRhamcomp} yields canonical isomorphisms
\begin{equation}
\label{fil0fil1}
    \Fil^0V_\dR(\xi)\cong S_\nu(\Gamma_1(N),L)[\xi^w]\qquad \Fil^1 V^*_\dR(\xi)\cong S_\nu(\Gamma_1(N),L)[\xi]
\end{equation}
(where $\xi^w:=w_{N}(\xi)$ and we follow Notation \ref{Heckeisotypicalcomp}) and a perfect duality
\begin{equation}
\label{dRduality}
\langle -,-\rangle_\xi: V_\dR(\xi)\otimes_L V^*_\dR(\xi)\to \D_\dR(L)=L\,,
\end{equation}
under which we get identifications
\begin{equation}
\label{vdrsufil1}
    V^*_\dR(\xi)/\Fil^1 V^*_\dR(\xi)\cong (S_\nu(\Gamma_1(N),L)[\xi^w])^\vee
\end{equation}
and
\begin{equation}
    V_\dR(\xi)/\Fil^0V_\dR(\xi)\cong (S_\nu(\Gamma_1(N),L)[\xi])^\vee\,,
\end{equation}
where $(-)^\vee$ denotes the $L$-dual of an $L$-vector space.

\subsection{The étale Abel--Jacobi map}
\label{abeljacobi}
In this section we fix a positive integer $M$ coprime to $p$ and a positive integer $t$ and we assume that $Mp^t\geq 5$. We consider a triple of cuspidal modular forms
\[
f=\sum_{n=1}^{+\infty}a_n(f)q^n,\qquad g=\sum_{n=1}^{+\infty}a_n(g)q^n,\qquad h=\sum_{n=1}^{+\infty}a_n(h)q^n
\]
with
\[
f\in S_k(Mp^t,\chi_f\omega^{2-k+k_0}\varepsilon_f),\qquad g\in S_l(Mp^t,\chi_g\omega^{2-l+l_0}\varepsilon_g),\qquad h\in S_m(Mp^t,\chi_h\omega^{2-m+m_0}\varepsilon_h).
\]
Here, $\omega$ is the Teichmüller character modulo $p$, $\chi_\xi$ is a character defined modulo $M$ and $\varepsilon_\xi$ is a character valued in $\mu_{p^\infty}$ for $\xi\in\{f,g,h\}$.
 
\begin{ass}
\label{balselfdual}
\begin{itemize}
\item [(i)] For $\xi\in\{f,g,h\}$, we assume that $\xi$ is a normalized eigenform, i.e., it holds $a_1(\xi)=1$ and $\xi$ is an eigenform for all the Hecke operators $T_\ell$ for all primes $\ell\nmid M$ (i.e., the good Hecke operators). We also assume that $\xi$ is an eigenform for the $U_p$ operator.
\item [(ii)] There exist a positive integer $M_1\mid M$ and a non-negative integer $s\leq t$ such that $f\in S_k(M_1p^s,\chi_f\omega^{2-k}\varepsilon_f,L)$ is a normalized $p$-ordinary newform of level $M_1p^s\geq 5$ or the ordinary $p$-stabilization of a $p$-ordinary newform of level $M_1\geq 5$.
\item [(iii)] The triple $(f,g,h)$ is \emph{tamely self-dual}, i.e., $\chi_f\chi_g\chi_h$ is the trivial character modulo $M$ and $k_0+m_0+l_0\equiv 0\mod (p-1)$.
\item [(iv)] The triple of weights $(k,l,m)$ is balanced and geometric, i.e., $(k,l,m)$ are the sizes of the edges of a triangle and $\nu\geq 2$ for $\nu\in\{k,l,m\}$.
\end{itemize}
\end{ass}
\medskip

\begin{nota}
\begin{itemize}
    \item [(i)]We fix a finite extension $L$ of $\Q_p$ containing the Fourier coefficients of $f,g,h$ (and a primitive $Mp^t$-th root of $1$) via the fixed embedding $\iota_p$.
    \item [(ii)] From now on in this section, we write $Y_t:=Y_1(Mp^t)_\Q$ (modular curve over $\Q$), with corresponding universal elliptic curve $u_t:\Es_t\to Y_t$.
\end{itemize}
\end{nota}

\begin{defi}
With the above notation, we set $r_1:=k-2$, $r_2:=l-2$, $r_3:=m-2$, $r:=(r_1+r_2+r_3)/2$, $\mathbf{r}:=(r_1,r_2,r_3)$. We also define the Dirichlet character of conductor a power of $p$ given by:
\[
\psi_{fgh}:=\omega^{(r_2+r_3-r_1-2k_0)/2}\cdot(\varepsilon_f^{-1}\varepsilon_g\varepsilon_h)^{-1/2}\,.
\]
We define $f':=f\otimes\omega^{k-2-k_0}\varepsilon_f^{-1}$ and $h':=h\otimes\psi_{fgh}$.
\end{defi}

\medskip
Applying \cite[Theorem 3.2]{AL1978}), we can also give the following definition.
\begin{defi}
\label{f'circdefi}
We let $f'^\circ$ denote the unique normalized newform of level dividing $M_1p^{2s}$ such that $f'=f\otimes\omega^{k-2-k_0}\varepsilon_f^{-1}=(f'^\circ)^{[p]}$ ($p$-depletion, i.e., $(f'^\circ)^{[p]}=f'^\circ- V_p\circ U_p (f'^\circ)$).    
\end{defi}

\begin{rmk}
\label{f'desc}
It holds that $f'^\circ\in S_k(M_1p^s,\chi_f\omega^{k-2-k_0}\varepsilon_f^{-1},L)$. More precisely:
\begin{itemize}
    \item [(i)] if $f$ is the ordinary $p$-stabilization of a newform $f^\circ$ of level $M_1$, then $f'^\circ=f^\circ$ (we have $k-k_0\equiv 2\mod p-1$ and $\varepsilon_f$ trivial in this case);
    \item [(ii)] if $f\in S_2(M_1p,\chi_f,L)$ is a newform, then $f'^\circ=f$;
    \item [(iii)] if $f$ is new of level $M_1p^s$ with $s\geq 1$ and it is $p$-primitive (i.e., the conductor of $\omega^{2-k+k_0}\varepsilon_f$ is exactly $p^s$), then $f'^\circ$ is the normalized eigenform given by a suitable multiple of $w_{p^s}(f)$ (this follows looking at the action of the good Hecke operators and knowing that $w_{p^s}(f)$ is new of level $M_1p^s$).
\end{itemize}
\end{rmk}

\begin{rmk}
\label{f'h'rmk}
Since we are not assuming that $g$ and $h$ are newforms, it is harmless to impose that $f$ is new of level $M_1p^s$ with $s\in\Z_{\geq 0}$ such that $s<t/2$. In this way, up to enlarging $t$, we can assume that $f'\in S_k(Mp^t,\chi_f\omega^{k-2-k_0}\varepsilon_f^{-1},L)$ and that $h'\in S_m(Mp^t,\chi_h\omega^{2-m+m_0}\varepsilon_h\psi_{fgh}^2)$ (i.e., that the $p$-part of the level is not increased by the twist). Note that if $\omega^{k-2-k_0}\varepsilon_f^{-1}$ (resp. $\psi_{fgh}$) is trivial, we want $f'=f^{[p]}$ (resp. $h'=h^{[p]}$).
\end{rmk}

\begin{defi}
We define
\begin{equation}
V(f,g,h):=V_{Mp^t}(f')\otimes_L V_{Mp^t}(g)\otimes_L V_{Mp^t}(h')(-1-r)      
\end{equation}
and
\begin{equation}
V^*(f,g,h):=V^*_{Mp^t}(f')\otimes_L V^*_{Mp^t}(g)\otimes_L V^*_{Mp^t}(h')(r+2)\,. 
\end{equation}
\end{defi}

\medskip
\begin{rmk}
\begin{itemize}
    \item [(i)] The representations $V(f,g,h)$ and $V^*(f,g,h)$ are Kummer self-dual by design and moreover they are canonically isomorphic to the Kummer dual of each other (essentially by the pairing \eqref{etalepairing} and by our choices of twists).
    \item [(ii)] Note that if the character of $f$ has trivial $p$-part (e.g. cases (i) and (ii) in Remark \ref{f'desc} above) and if we assume that the triple $(f,g,h)$ is self-dual (i.e., the product of the characters of the three forms is the trivial character), then $V^?_{Mp^t}(\xi')=V^?_{Mp^t}(\xi)$ for $\xi\in\{f,h\}$ and $?\in\{*,\emptyset\}$.
\end{itemize}
\end{rmk}

In \cite[Section 3]{BSVast1}, the authors associate to the triple $(f,g,h)$ a Galois cohomology class $\kappa(f,g,h)\in H^1(\Q,V(f,g,h))$. 

\medskip
Here is a diagram depicting the situation.

\begin{center}
    \begin{tikzcd}
   & \DET_\mathbf{r}^\et\in H^0_\et(Y_t,\Hs_\mathbf{r}(r))_L\arrow[dd, bend right = 90, mapsto]\arrow[r, "d_{t,*}"]\arrow[dr, "AJ_\et"]& H^4_\et(Y_t^3,\Hs_{[\mathbf{r}]}(r+2))_L
    \arrow[d, "\mathrm{HS}"] \\
    & H^1\big(\Q, H^3_\et(Y^3_{t,\bar{\Q}},\Ls_{[\mathbf{r}]}(2-r))_L\big)
    \arrow[d, twoheadrightarrow, "pr^{fgh}_\et"']& H^1\big(\Q, H^3_\et(Y^3_{t,\bar{\Q}},\Hs_{[\mathbf{r}]}(r+2))_L\big)\arrow[l, "s_\mathbf{r}", "\cong"'] \\
    & \kappa(f,g,h)\in H^1(\Q, V(f,g,h)) 
\end{tikzcd}
\end{center}

\begin{rmk}
\label{AJetrmk}
The following discussion explains the diagram above.
\begin{itemize}
    \item [(i)] $d_t: Y_t\hookrightarrow Y_t\times Y_t\times Y_t$ is the diagonal embedding and $d_{t,*}$ is the corresponding Gysin map.
    \item [(ii)] For $\mathscr{F}\in\{\Hs,\Ls\}$ and $\mathbf{r}=(r_1,r_2,r_3)$ as above, the $\Z_p$-local system $\mathscr{F}_\mathbf{r}$ on $Y_t$ is defined as
    \[    \mathscr{F}_\mathbf{r}:=\mathscr{F}_{r_1}\otimes_{\Z_p}\mathscr{F}_{r_2}\otimes_{\Z_p}\mathscr{F}_{r_3}
    \]
    and the $\Z_p$-local system $\mathscr{F}_{[\mathbf{r}]}$ on $Y_t^3$ is defined as
    \[        \mathscr{F}_{[\mathbf{r}]}:=p_1^*\mathscr{F}_{r_1}\otimes_{\Z_p}p_2^*\mathscr{F}_{r_2}\otimes_{\Z_p} p_3^*\mathscr{F}_{r_3}\,,
    \]
    where $p_j: Y_t\times Y_t\times Y_t\to Y_t$ is the natural projection on the $j$-th factor. In particular, it follows that $d_t^*\mathscr{F}_{[\mathbf{r}]}=\mathscr{F}_\mathbf{r}$.
    \item [(iii)] The map $\mathrm{HS}$ is a morphism coming from the Hochschild--Serre spectral sequence
    \begin{equation}
    \label{HSspectral}
        H^i\big(\Q,H^j_\et(Y_{t,\bar{\Q}}^3,\Hs_{[\mathbf{r}]}(r+2))\big)\Rightarrow H^{i+j}_\et(Y_t^3, \Hs_{[\mathbf{r}]}(r+2))\,.
    \end{equation}
    The existence of such a morphism is due to the fact that $H^4_\et(Y_{t,\bar{\Q}}^3,\Hs_{[\mathbf{r}]}(r+2))=0$ (by Artin vanishing theorem, as $Y_t$ is an affine curve).
    \item [(iv)] The étale Abel--Jacobi map in this setting is defined as $AJ_\et:=d_{t,*}\circ\mathrm{HS}$.
    \item [(v)] The isomorphism $s_\mathbf{r}$ is induced by the isomorphisms $s_{r_j}$ for $j\in\{1,2,3\}$ (cf. equation \eqref{isosr}).
    \item [(vi)] The projection $pr_\et^{fgh}$ is induced in Galois cohomology by viewing $V(f,g,h)$ as a quotient of $H^3_\et(Y^3_{t,\bar{\Q}},\Ls_{[\mathbf{r}]}(2-r))_L$ via Künneth formula and projection to the corresponding Hecke isotypical component.
\end{itemize}
\end{rmk}

\medskip
The class $\kappa(f,g,h)$ is the image under the composition $pr^{fgh}_\et\circ s_{\mathbf{r}}\circ AJ_\et$ of the element
    \begin{equation}
    \label{detrdef}
        \DET_\mathbf{r}^\et\in H^0_\et(Y_t,\Hs_\mathbf{r}(r))_L\,,
    \end{equation}
    which we now describe. Write $Y=Y_t$ till the end of this section. 
    
    We fix a geometric point $y:\Spec(\bar{\Q})\to Y$ and we consider the étale fundamental group $\pi_1^\et(Y,y)$. Passing to the stalk at $y$ induces an equivalence of categories
    \[
    \mathrm{Loc}_{Y}(\Z_p)\simeq \mathrm{Rep}_{\Z_p}^{cont}(\pi_1^\et(Y,y)),
    \]
    where: 
    \begin{itemize}
        \item [(i)] $\mathrm{Loc}_{Y}(\Z_p)$ is the category of étale $\Z_p$-local systems on $Y$;
        \item [(ii)] $\mathrm{Rep}_{\Z_p}^{cont}(\pi_1^\et(Y,y))$ is the category of continuous representations of $\pi_1^\et(Y,y)$ in finite free $\Z_p$-modules.
    \end{itemize}
    In particular, the stalk $ \TT_p(\Es)_y$ is a rank 2 free $\Z_p$-module. The $p$-adic Weil pairing $ \TT_p(\Es)_y\otimes_{\Z_p} \TT_p(\Es)_y\to \Z_p(1)$ is well-known to be perfect, $\Z_p$-bilinear, alternating and Galois-invariant (for the action of $G_{\Q}$). The construction recalled in Section \ref{appendixlinear} below applies to this setting (with $M= \TT_p(\Es)_y$), yielding an element
    \[
    \DET_\mathbf{r}\in H^0_{cont}(\Aut_{\Z_p}( \TT_p(\Es)_y), S_\mathbf{r}\otimes\Z_p[r])\,.
    \]
    The action of $\pi_1^\et(Y,y)$ on $ \TT_p(\Es)_y$ is encoded in a continuous morphism $\pi_1^\et(Y,y)\to \Aut_{\Z_p}( \TT_p(\Es)_y)$. We thus obtain a functor
    \[
(-)^\et:\mathrm{Rep}_{\Z_p}^{cont}(\Aut_{\Z_p}( \TT_p(\Es)_y))\to \mathrm{Rep}_{\Z_p}^{cont}(\pi_1^\et(Y,y))\simeq\mathrm{Loc}_{Y}(\Z_p)\,,\quad N\rightsquigarrow N^\et\,.
    \]
    It follows from the construction that, with the notation of the following Section \ref{appendixlinear}, $S_n^{\et}=\Hs_n$ for all integers $n\geq 0$ and $\Z_p[m]^\et=\Z_p(m)$ for all $m\in\Z$. As a consequence, we obtain an inclusion at the level of invariants
    \[
    H^0_{cont}(\Aut_{\Z_p}( \TT_p(\Es)_y), S_\mathbf{r}\otimes\Z_p[r])\hookrightarrow H^0_{cont}(\pi_1^\et(Y,y),\Hs_\mathbf{r}(r)_y)\cong H^0_\et(Y,\Hs_\mathbf{r}(r))\,.
    \]
    The element $\DET_\mathbf{r}^\et$ is defined as the image of the invariant $\DET_\mathbf{r}$ inside $H^0_\et(Y,\Hs_\mathbf{r}(r))_L$ via the above inclusion (followed by extension of scalars to $L$).
    
\subsection{Construction of the invariant \texorpdfstring{$\DET_\mathbf{r}$}{[DET]}}
\label{appendixlinear}
In this section, we let $R$ denote either a local ring or a principal ideal domain and we let $M$ denote a free $R$-module of rank $2$, with perfect alternating $R$-bilinear form
\[
\langle~,~\rangle_M: M\times M\to R\,.
\]
Set $G:=\Aut_R(M)$ and let $S:=\Hom_R(M,R)$ denote the $R$-dual of $M$, which we consider as a left-$R[G]$ module in the usual way, i.e.
\[
g*\lambda (v):=\lambda(g^{-1}(v))\qquad g\in G, \lambda\in S, v\in M\,.
\]
For $n\geq 0$ we let $S_n:=\Sym^n(S)$ denote the $n$-th symmetric power of $S$, defined as the maximal symmetric quotient of $S^{\otimes n}$. We view $S_n$ as a $R[G]$ module with the induced action.

For $m\in\Z$, we let $R[m]$ denote the ring $R$, with $G$-action via the $m$-th power of the determinant. We think of $R[m]$ as the free $R$-module of rank one, with fixed generator $a_m=1\in R$ such that $g*a_m=\det(g)^m\cdot a_m$ for all $g\in G$.

\medskip
In particular, the pairing $\langle~,~\rangle_M$ can be viewed as a perfect alternating $R$-bilinear form $\langle~,~\rangle_M:M\times M\to R[1]$, which is also $G$-equivariant, i.e. $\langle g*v_1,g*v_2\rangle_M=g*\langle v_1,v_2\rangle$.

\medskip
We fix a triple of non-negative integers $\mathbf{r}:=(r_1,r_2,r_3)$ such that:
\begin{itemize}
    \item [(i)] $r:=(r_1+r_2+r_3)/2\in\Z$;
    \item [(ii)] for every permutation of $\{i,j,k\}$ of $\{1,2,3\}$, it holds $r_i+r_j> r_k$. 
\end{itemize} 

We finally set $S_\mathbf{r}:= S_{r_1}\otimes_R S_{r_2}\otimes_R S_{r_3}$, which we endow with the structure of $R[G]$-module via the diagonal action on the three factors.

\medskip
The aim of this section is to produce a canonical invariant element
\[
\DET_\mathbf{r}\in H^0(G,S_\mathbf{r}\otimes R[r])\,.
\]

\medskip
In order to proceed more explicitly, we fix a symplectic $R$-basis $\{e_1,e_2\}$ of $M$ (i.e., $\langle e_1,e_2\rangle_M=1$, $\langle e_2,e_1\rangle_M=-1$). We let $\{\varepsilon_1,\varepsilon_2\}$ be the $R$-basis of $S$ which is dual to $\{e_1,e_2\}$.

In this way we identify $M=R\oplus R$ (column vectors), $S=R\oplus R$ (row vectors) and $G=\GL_2(R)$, with the natural action of $\GL_2(R)$ by matrix multiplication on column vectors on $M$ and the action of $\GL_2(R)$ on $S$ given by
\[
g*\lambda = \lambda\cdot g^{-1}\qquad g\in\GL_2(R),\lambda\in S\,.
\]
For every $n\geq 0$ we can then identify $S_n$ with the $R$-module of two-variable homogeneous polynomials of degree $n$ with $R$-coefficients, with $\GL_2(R)$-action given by
\[
g*P(x_1,x_2)= P\Big(\big(g^{-1}\cdot \begin{psmallmatrix}
     x_1 \\ x_2
\end{psmallmatrix}
\big)^t\Big)\,,
\]
where $(\cdot)^t$ denotes transposition.

\medskip
We view $S_\mathbf{r}$ as the $R$-module of polynomials in six variables $(x_1,x_2,y_1,y_2,z_1,z_2)$ with $R$-coefficients which are homogeneous of degree $r_1$ with respect to $(x_1,x_2)$, homogeneous of degree $r_2$ with respect to $(y_1,y_2)$ and homogeneous of degree $r_3$ with respect to $(z_1,z_2)$. The action of $\GL_2(R)$ on $S_\mathbf{r}$ can then be explicitly described in terms of the variables.

\medskip
We finally define an element $P_\mathbf{r}\in S_\mathbf{r}$ as follows
\[
P_\mathbf{r}(x_1,x_2,y_1,y_2,z_1,z_2):=\det\begin{pmatrix}
    x_1 & x_2 \\ y_1 & y_2
\end{pmatrix}^{r-r_3}\cdot \det\begin{pmatrix}
    x_1 & x_2 \\ z_1 & z_2
\end{pmatrix}^{r-r_2}\cdot
\det\begin{pmatrix}
    y_1 & y_2 \\ z_1 & z_2
\end{pmatrix}^{r-r_1}
\]
and one can easily check that for every $g\in\GL_2(R)$ it holds $g*P_\mathbf{r}=\det(g)^{-r}\cdot P_\mathbf{r}$, so that clearly $\DET_\mathbf{r}:=P_\mathbf{r}\otimes a_r\in S_\mathbf{r}\otimes R[r]$ is an invariant for the $G=\GL_2(R)$-action.

It is also clear that the element $\DET_\mathbf{r}$ depends only on $M$ and the pairing $\langle~,~\rangle_M$, but not on the choice of a symplectic basis.

\section{Some \texorpdfstring{$p$}{p}-adic Hodge theory}
\label{padicHTchapter}
In this section we introduce the necessary tools from $p$-adic Hodge theory and we apply them to the study of the Galois representation $V(f,g,h)$, seen as a representation of $G_{\Q_p}$. The main references are \cite[Section 1]{BLZ2016} and \cite{GM2009}.
\subsection{Filtered \texorpdfstring{$(\varphi,N)$}{(phi,N)}-modules and Galois representations}
\label{phiNsection}
For the generalities on $(\varphi,N)$-modules with coefficients we refer to \cite[Section 2]{GM2009}. As in the previous sections, we let $L$ be a finite and large enough extension of $\Q_p$ and we denote by $\Q_p^{nr}$ the maximal unramified extension of $\Q_p$ (inside the fixed algebraic closure $\bar{\Q}_p$) and we let $\sigma\in\Gal(\Q_p^{nr}/\Q_p)$ denote the Frobenius (i.e., the unique lift to $\Q_p^{nr}$ of the Frobenius automorphism $x\mapsto x^p$ of $\bar{\F}_p$).

\begin{defi}
\label{filteredphiNmodule}
A filtered $(\varphi,N,G_{\Q_p},L)$-module $D$ is a free $(\Q_p^{nr}\otimes_{\Q_p}L)$-module of finite rank endowed with:
\begin{itemize}
    \item [(i)] the Frobenius endomorphism: a $\sigma$-semilinear, $L$-linear, bijective map $\varphi:D\to D$;
    \item [(ii)] the monodromy operator: a $\Q_p^{nr}\otimes_{\Q_p}L$-linear, nilpotent endomorphism $N:D\to D$ such that $N\circ\varphi=p\cdot \varphi\circ N$;
    \item [(iii)] a $\sigma$-semilinear, $L$-linear action of $G_{\Q_p}$, commuting with $\varphi$ and $N$, and such that every element of $D$ has an open stabilizer for such action (i.e., it is a \emph{smooth} action);
    \item [(iv)] a decreasing, separated, exhaustive, $G_{\Q_p}$-stable filtration on $\bar{\Q}_p\otimes_{\Q_p^{nr}} D$ given by free $(\bar{\Q}_p\otimes_{\Q_p}L)$-submodules.
\end{itemize}
\end{defi}

There are similar definitions for filtered $(\varphi,N,G_F,L)$-modules, where $F$ is any finite extension of $\Q_p$ and one can view filtered $(\varphi,N,G_{\Q_p},L)$-modules as filtered $(\varphi,N,G_F,L)$-modules by restriction.

\begin{nota}
We fix a filtered $(\varphi,N,G_{\Q_p},L)$-module $D$ and we let $F$ be a finite extension of $\Q_p$. Denote by $F_0$ the maximal unramified subextension of $F$ and let $q=p^d$ be the cardinality of the residue field of $F$. We will always assume that $F\subseteq L$ in what follows. 

We set
\[
D_{\st,F_0}:=D^{G_F}\quad D_{\st,F}:=D^{G_F}\otimes_{F_0}F,\quad D_{\dR,F}:=(D\otimes_{\Q_p^{nr}}\bar{\Q}_p)^{G_F},\quad D_{\cris,F}=D_{\st,F}^{N=0}.
\]    
\end{nota}

\begin{ass}
Every $p$-adic Galois representation $V$ of $G_{\Q_p}$ appearing in the sequel will be a de Rham (equivalently, potentially semistable) representation.    
\end{ass}

\begin{defi}
\begin{itemize}
    \item [(i)] If $V$ is a $p$-adic Galois representation of $G_{\Q_p}$ with coefficients in $L$, $F$ is a finite extension of $\Q_p$ and $?\in\{\dR,\st,\cris\}$, we set
\[
\D_{?,F}(V):=(\B_?\otimes_{\Q_p}V)^{G_F}.
\] 
We say that $V$ is $F$-semistable (resp. $F$-crystalline) if 
$\D_{\st,F}(V)$ (resp. $\D_{\cris,F}(V)$) is a free $F_0\otimes_{\Q_p} L$-module of rank equal to $\dim_L(V)$.
    \item [(ii)] For $V$ as in (i), we also define the $(\Q_p^{nr}\otimes_{\Q_p}L)$-module attached to it as
    \begin{equation}
\label{Dpst}
    \D_\pst(V):=\varinjlim_{\substack{M/\Q_p \\ \text{finite}}} (\B_\st\otimes_{\Q_p}V)^{G_M}.
    \end{equation}
\end{itemize}
\end{defi}

\begin{rmk}
If $V$ is a $p$-adic Galois representation of $G_{\Q_p}$ with coefficients in $L$, the $(\Q_p^{nr}\otimes_{\Q_p}L)$-module $\D_\pst(V)$ inherits a structure of filtered $(\varphi,N,G_{\Q_p}, L)$-module. The functor $\D_\pst(-)$ provides an equivalence of categories between potentially semistable Galois representations with coefficients in $L$ and \emph{admissible} filtered $(\varphi,N,G_{\Q_p}, L)$-modules.
\end{rmk}

For later purposes, we define the so-called Bloch--Kato subspaces in first Galois cohomology group.
\begin{defi}
\label{BKsubspacesdef}
For $V$ a (de Rham) $p$-adic Galois representation of $G_{\Q_p}$ and $F$ a finite extension of $\Q_p$, one defines:
\begin{align*}
 H^1_g(F,V) & :=\Ker\big(H^1(F,V)\to H^1(F,\B_\dR\otimes_{\Q_p}V)\big)\,,\\
 H^1_f(F,V) & :=\Ker\big(H^1(F,V)\to H^1(F,\B_\cris\otimes_{\Q_p}V)\big)\,,\\
 H^1_e(F,V) & :=\Ker\big(H^1(F,V)\to H^1(F,\B_\cris^{\varphi=1}\otimes_{\Q_p}V)\big)\,.
\end{align*}    
\end{defi}

\medskip
One can study the cohomology of admissible filtered $(\varphi,N)$-modules and compare it with the Galois cohomology of the associated Galois representation.

\begin{defi}
Let $D$ be a filtered $(\varphi,N,G_{\Q_p},L)$-module and $F\subseteq L$ as above. The cohomology groups $H^i_\st(F,D)$ are given by the cohomology of the complex
\begin{equation}
\label{syntomiccomplex}
    \begin{tikzcd}[row sep =tiny, column sep = tiny]
    & C^\bullet_{\st,F}(D): &D_{\st,F_0}\oplus \Fil^0D_\dR\arrow[r] &D_{\st,F_0}\oplus D_{\st,F_0}\oplus D_{\dR,F}\arrow[r] & D_{\st,F_0} \\
    & & (u,v)\arrow[r, mapsto] &((1-\varphi)u, Nu, u-v) \\
    & & & (w,x,y)\arrow[r, mapsto] & Nw-(1-p\varphi)x
    \end{tikzcd} 
\end{equation}
concentrated in degrees $0,\,1$ and $2$.    
\end{defi}

\begin{teo}
\label{phiNGaloistheorem}
The complex $C^\bullet_{\st,F}(-)$ computes $\mathrm{Ext}$ groups $\Ext^i(\Q_p^{nr}\otimes_{\Q_p}L,-)$ for $i=0,1,2$ in the category of (\emph{admissible}) filtered $(\varphi, N, G_F,L)$-modules. If $V$ is a $p$-adic representation of $G_{\Q_p}$ with $L$-coefficients and $D=\D_\pst(V)$, the functor $\D_\pst$ induces functorial maps $H^i_\st(F,D)\to H^i(F,V)$ for every finite extension $F\subseteq L$ of $\Q_p$. These maps are isomorphisms for $i=0$ and injective for $i=1$. If $V$ is $F$-semistable, then for $i=1$ the image of the map $H^1_\st(F,D)\to H^i(F,V)$ coincides with the subspace $H^1_g(F,V)$ of Definition \ref{BKsubspacesdef}.
\begin{proof}
This is a well-known result. We refer to \cite[Remarks 2.6 and 2.7]{NN2016} and the references given therein for more details. Note that if $D=\D_\pst(V)$ with $V$ an $F$-semistable, then $H^1_\st(F,D)$ also classifies $L$-linear extensions of $L$ by $V$ which are potentially semistable. The statement concerning the image of the map $H^1_\st(F,D)\to H^i(F,V)$ follows easily recalling that $H^1_g(F,V)$ classifies $L$-linear extensions of $L$ by $V$ which are de Rham, equivalently potentially semistable.
\end{proof}
\end{teo}

\begin{defi}
\label{stBKexpdefi}
Let $V$ be an $F$-semistable representation of $G_{\Q_p}$ with $L$-coefficients. We define the semistable Bloch--Kato exponential map for $V$ as the isomorphism
\[
\exp_{\st,V}: H^1_\st(F,\D_\pst(V))\xrightarrow{\cong}H^1_g(F,V)
\]
afforded by Theorem \ref{phiNGaloistheorem}.
\end{defi}

\medskip
Now we describe slight generalizations/modifications of the complex $C^\bullet_{\st,F}(D)$ for $D$ a filtered $(\varphi,N, G_{\Q_p},L)$-module. We let $\Phi=\varphi^d$, so that $\Phi$ is $F_0$-linear on $D^{G_F}$ and extends to a linear endomorphism of $D_{\st,F}$.
For every polynomial $Q(T)\in 1+T\cdot L[T]$ one can define two variants of the above complex \eqref{syntomiccomplex} given by
\begin{equation}
\label{finitepolynomialphi}
\begin{tikzcd}[row sep =tiny, column sep = tiny]
    & C^\bullet_{\st,F,Q}(D): &D_{\st,F_0}\oplus \Fil^0D_\dR\arrow[r] &D_{\st,F_0}\oplus D_{\st,F_0}\oplus D_{\dR,F}\arrow[r] & D_{\st,F_0} \\
    & & (u,v)\arrow[r, mapsto] &(Q(\varphi)u, Nu, u-v) \\
    & & & (w,x,y)\arrow[r, mapsto] & Nw-Q(p\varphi)x
    \end{tikzcd}    
\end{equation}
and 
\begin{equation}
\label{finitepolynomialPhi}
\begin{tikzcd}[row sep =tiny, column sep = tiny]
    & \Tilde{C}^\bullet_{\st,F,Q}(D): &D_{\st,F}\oplus \Fil^0D_\dR\arrow[r] &D_{\st,F}\oplus D_{\st,F}\oplus D_{\dR,F}\arrow[r] & D_{\st,F} \\
    & & (u,v)\arrow[r, mapsto] &(Q(\Phi)u, Nu, u-v) \\
    & & & (w,x,y)\arrow[r, mapsto] & Nw-Q(q\Phi)x
    \end{tikzcd}    
\end{equation}
and define cohomology groups $H^i_{\st,Q}(F,D)$ and $\Tilde{H}^i_{\st,Q}(F,D)$ (for $i=0,1,2$) accordingly. In particular, note that $H^i_{\st,1-T}(F,D)=H^i_\st(F,D)$ in our notation.

\begin{rmk}
\label{inclH0}
Note that there is always a natural inclusion 
\[
H^0_{\st,1-T}(F,D)=D_{\st,F_0}^{\varphi=1}\cap\Fil^0 D_{\dR,F} \subseteq \Tilde{H}^0_{\st,1-T}(F,D)=D_{\st,F}^{\Phi=1}\cap\Fil^0 D_{\dR,F}.
\]
\end{rmk}

\begin{lemma}
\label{lemmast2}
Let $P_1,P_2\in 1+T\cdot L[T]$ be two polynomials. Then there is a natural morphism of complexes $C^\bullet_{\st,F,P_1}(D)\to C^\bullet_{\st,F,P_1P_2}(D)$ given by
\begin{center}
    \begin{tikzcd}[column sep = small]
    & C^\bullet_{\st,F,P_1}(D):\arrow[d, "c_{P_2}"] &D_{\st,F_0}\oplus \Fil^0D_{\dR,F}\arrow[r]\arrow[d, "id\oplus id"] &D_{\st,F_0}\oplus D_{\st,F_0}\oplus D_{\dR,F}\arrow[r]\arrow[d, "P_2(\varphi)\oplus id\oplus id"] & D_{\st,F_0}\arrow[d, "P_2(p\varphi)"] \\
    & C^\bullet_{\st,F,P_1P_2}(D): &D_{\st,F_0}\oplus \Fil^0D_{\dR,F}\arrow[r] &D_{\st,F_0}\oplus D_{\st,F_0}\oplus D_{\dR,F}\arrow[r] & D_{\st,F_0}
    \end{tikzcd}
\end{center}
which is a quasi-isomorphism if $P_2(\varphi)$ and $P_2(p\varphi)$ are bijective on $D_{\st,F_0}$.

Moreover the morphism $c_{P_2}$ always induces a short exact sequence of the form:
\begin{align*}
    0\to H^0_{\st,P_2}(F,D)\to D_{\st,F_0}^{P_2(\varphi)=0,\, N=0} &\to \Ker(H^1_{\st,P_1}(F,D)\to H^1_{\st,P_1P_2}(F,D))\to 0\\
    w &\mapsto [(w,0,0)]
\end{align*}
\begin{proof}
This is also an easy exercise. The only point where one has to be a slightly careful is to show that every class $[(w,x,y)]$ inside $\Ker(H^1_{\st,P_1}(F,D)\to H^1_{\st,P_1P_2}(F,D))$ can be represented as $[(w',0,0)]$ for a suitable $w'\in D_{\st,P_2}^{P_2(\varphi)=0,\, N=0}$.

But if  $(P_2(\varphi)w,x,y)=(P_1P_2(\varphi)u,Nu,u-v)$ for some $(u,v)\in D_{\st,F_0}\oplus\Fil^0D_{\dR,F}$, then $w'=w-P_1(\varphi)u$ does the job.
\end{proof}
\end{lemma}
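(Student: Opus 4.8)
The plan is to treat the statement as three essentially independent checks. First I would verify that $c_{P_2}$ is a morphism of complexes: the left-hand square commutes because polynomials in $\varphi$ commute, so $P_2(\varphi)P_1(\varphi)=(P_1P_2)(\varphi)$; the right-hand square commutes by the same identity applied to $p\varphi$ together with the relation $N\circ P(\varphi)=P(p\varphi)\circ N$, valid for any $P\in L[T]$, which one obtains by iterating $N\varphi=p\varphi N$ on monomials. For the quasi-isomorphism assertion, I would simply observe that when $P_2(\varphi)$ and $P_2(p\varphi)$ are bijective on $D_{\st,F_0}$ all three vertical arrows of $c_{P_2}$ are isomorphisms — the identity in degree $0$, the map $P_2(\varphi)\oplus\mathrm{id}\oplus\mathrm{id}$ in degree $1$, and $P_2(p\varphi)$ in degree $2$ — so $c_{P_2}$ is an isomorphism of complexes, a fortiori a quasi-isomorphism.

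For the short exact sequence I would argue by hand. The assignment $w\mapsto[(w,0,0)]$ is well defined with values in $\Ker(H^1_{\st,P_1}(F,D)\to H^1_{\st,P_1P_2}(F,D))$: for $w\in D_{\st,F_0}^{P_2(\varphi)=0,\,N=0}$ the triple $(w,0,0)$ is a $1$-cocycle of $C^\bullet_{\st,F,P_1}(D)$ since $Nw=0$, and its image $c_{P_2}(w,0,0)=(P_2(\varphi)w,0,0)$ vanishes. The one place that needs a little care is surjectivity onto that kernel: if $[(w,x,y)]$ dies in $H^1_{\st,P_1P_2}(F,D)$, then $c_{P_2}(w,x,y)=(P_2(\varphi)w,x,y)$ is a coboundary in $C^\bullet_{\st,F,P_1P_2}(D)$, say $(P_2(\varphi)w,x,y)=((P_1P_2)(\varphi)u,Nu,u-v)$ with $(u,v)\in D_{\st,F_0}\oplus\Fil^0D_{\dR,F}$; then $w':=w-P_1(\varphi)u$ lies in $D_{\st,F_0}^{P_2(\varphi)=0,\,N=0}$, because $P_2(\varphi)w'=P_2(\varphi)w-(P_1P_2)(\varphi)u=0$ and $Nw'=Nw-P_1(p\varphi)Nu=Nw-P_1(p\varphi)x=0$ by the cocycle relation for $(w,x,y)$, and subtracting the coboundary $d^0_{P_1}(u,v)$ gives $[(w,x,y)]=[(w',0,0)]$.

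Finally I would identify the kernel of $w\mapsto[(w,0,0)]$: the class of $(w,0,0)$ is trivial in $H^1_{\st,P_1}(F,D)$ precisely when $w=P_1(\varphi)u$ for some $u\in D_{\st,F_0}$ with $Nu=0$ whose image in $D_{\dR,F}$ lies in $\Fil^0$, and I would check that this set is exactly the image of the natural map out of $H^0_{\st,P_2}(F,D)$, which closes the exact sequence. The cleanest packaging of the last part is probably to pass to the mapping cone $\mathrm{Cone}(c_{P_2})$: a direct computation shows that every degree-$0$ cohomology class of the cone has a unique representative of the shape $((w,0,0),0)$ with $w\in D_{\st,F_0}^{P_2(\varphi)=0,\,N=0}$, so that $H^0(\mathrm{Cone}(c_{P_2}))\cong D_{\st,F_0}^{P_2(\varphi)=0,\,N=0}$, and the four-term piece of the long exact sequence of the cone around degrees $0$ and $1$ is precisely the asserted sequence. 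I do not expect a real obstacle: essentially everything is bookkeeping, and the only subtle spot is the surjectivity step, where the obvious candidate $w$ itself need not land in the target subspace and must be corrected by the coboundary $P_1(\varphi)u$ coming from the witness of triviality in the target complex.
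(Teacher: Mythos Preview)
Your proposal is correct and follows essentially the same approach as the paper: the paper's proof is deliberately terse (``an easy exercise''), singling out only the surjectivity step, and your argument there is literally the same --- correct the given representative by the coboundary $d^0_{P_1}(u,v)$ to land on $w'=w-P_1(\varphi)u$. The extra detail you provide (commutativity of the squares via $N\varphi=p\varphi N$, the observation that bijectivity of $P_2(\varphi)$ and $P_2(p\varphi)$ makes $c_{P_2}$ an isomorphism of complexes, and the mapping-cone repackaging) is all sound and simply fleshes out what the paper leaves implicit.
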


\begin{defi}
\label{convenient}
An admissible $(\varphi,N,G_{\Q_p},L)$-module $D$ is $(F,Q)$-\textbf{convenient} if $D$ is $F$-crystalline (i.e., $D_{\dR,F}=D_{\st,F}=D_{\cris,F}$) and $Q(\Phi)$ and $Q(q\Phi)$ are bijective on $D_{\st,F}$.
\end{defi}
\begin{lemma}
\label{lemmast1}
In the above setting, if $D$ is $(F,Q)$-convenient, then the morphism of complexes $[\Fil^0D_{\dR,F}\to D_{\dR,F}\to 0]\to \Tilde{C}^\bullet_{\st,F,Q}(D)$ given by the obvious inclusions of $\Fil^0D_{\dR,F}$ inside $D_{\st,F}\oplus\Fil^0 D_{\dR,F}$ and of $D_{\dR,F}$ inside $D_{\st,F}\oplus D_{\st,F}\oplus D_{\dR,F}$ is actually a quasi-isomorphism. If, moreover, we consider $P(T)\in 1+T\cdot L[T]$ such that $P(T)\mid Q(T^d)$, then we can actually identify
\[
\frac{D_{\dR,F}}{\Fil^0D_{\dR,F}}\cong \Tilde{H}^1_{\st,Q}(F,D)\cong H^1_{\st,P}(F,D)\,.
\]
\begin{proof}
This is an easy exercise. One can check that the inverse to the isomorphism 
\[
\frac{D_{\dR,F}}{\Fil^0D_{\dR,F}}\xrightarrow{\cong} \Tilde{H}^1_{\st,Q}(F,D)
\]
is given by $[(w,x,y)]\mapsto y-Q(\Phi)^{-1}w\mod \Fil^0 D_{\dR,F}$. The identification $H^1_{\st,P}(F,D)\cong \Tilde{H}^1_{\st,Q}(F,D)$ follows immediately from Lemma \ref{lemmast2}, taking $P_1(T)=P(T)$ and $P_2(T)=Q(T^d)/P(T)$.
\end{proof}
\end{lemma}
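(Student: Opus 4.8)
The plan is to reduce the whole statement to an elementary computation of the cohomology of $\widetilde{C}^\bullet_{\st,F,Q}(D)$, exploiting that $(F,Q)$-convenience trivializes all the relevant structure. First I would unwind the hypothesis. Since $D$ is $F$-crystalline we have $D_{\dR,F}=D_{\st,F}=D_{\cris,F}$; in particular the monodromy operator $N$ vanishes on $D_{\st,F}$, hence (by faithfully flat descent along $F_0\hookrightarrow F$) also on $D_{\st,F_0}$. By definition of convenience, $Q(\Phi)$ and $Q(q\Phi)$ are bijective $F$-linear endomorphisms of $D_{\st,F}$. With $N=0$ the differentials of $\widetilde{C}^\bullet_{\st,F,Q}(D)$ collapse to $d^0(u,v)=(Q(\Phi)u,\,0,\,u-v)$ and $d^1(w,x,y)=-Q(q\Phi)x$.

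Second, I would compute the cohomology of this three-term complex by hand. Injectivity of $Q(\Phi)$ forces $u=0$, and then $v=u=0$, in any element of $\ker d^0$, so $H^0=0$; surjectivity of $Q(q\Phi)$ gives $H^2=0$. For $H^1$, injectivity of $Q(q\Phi)$ forces $x=0$ in $\ker d^1$, so $\ker d^1=\{(w,0,y):w\in D_{\st,F},\ y\in D_{\dR,F}\}$, while $\operatorname{im}d^0=\{(w,0,Q(\Phi)^{-1}w-v):w\in D_{\st,F},\ v\in\Fil^0 D_{\dR,F}\}$. Subtracting $d^0(Q(\Phi)^{-1}w,0)$ from a cocycle $(w,0,y)$ brings it to $(0,0,y-Q(\Phi)^{-1}w)$, and $(0,0,y')$ is a coboundary precisely when $y'\in\Fil^0 D_{\dR,F}$; hence $y\mapsto(0,0,y)$ induces an isomorphism $D_{\dR,F}/\Fil^0 D_{\dR,F}\xrightarrow{\sim}\widetilde{H}^1_{\st,Q}(F,D)$ whose inverse is $[(w,x,y)]\mapsto y-Q(\Phi)^{-1}w\bmod\Fil^0 D_{\dR,F}$. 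On cohomology the comparison map of the statement realizes exactly this isomorphism in degree $1$ and the zero maps in degrees $0$ and $2$, so it is a quasi-isomorphism.

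Third, for the identification with $H^1_{\st,P}(F,D)$ I would bring in Lemma \ref{lemmast2}. Write $Q(T^d)=P(T)R(T)$ with $R\in 1+T\cdot L[T]$. Using the identities $\varphi^d=\Phi$ and $(p\varphi)^d=q\Phi$ (valid since $\varphi$ fixes $p$), substitution of $T=\varphi$ and $T=p\varphi$ in $Q(T^d)=P(T)R(T)$ yields operator identities $Q(\Phi)=P(\varphi)R(\varphi)$ and $Q(q\Phi)=P(p\varphi)R(p\varphi)$ on $D_{\st,F_0}$; since the left-hand sides are bijective and $D_{\st,F_0}$ is finite-dimensional over $L$, each of $P(\varphi),P(p\varphi),R(\varphi),R(p\varphi)$ is a bijective $L$-linear endomorphism of $D_{\st,F_0}$. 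Lemma \ref{lemmast2} with $(P_1,P_2)=(P,R)$ then gives a quasi-isomorphism $C^\bullet_{\st,F,P}(D)\xrightarrow{\sim}C^\bullet_{\st,F,Q(T^d)}(D)$, and the evident inclusion $D_{\st,F_0}\hookrightarrow D_{\st,F}$ (recall $D_{\dR,F}=D_{\st,F}$ here) exhibits $C^\bullet_{\st,F,Q(T^d)}(D)$ as a subcomplex of $\widetilde{C}^\bullet_{\st,F,Q}(D)$ which, by the same computation as in the second step, is again a quasi-isomorphism. Hence $H^1_{\st,P}(F,D)\cong\widetilde{H}^1_{\st,Q}(F,D)\cong D_{\dR,F}/\Fil^0 D_{\dR,F}$. (Equivalently one may bypass the comparison and rerun the second step verbatim on $C^\bullet_{\st,F,P}(D)$, using bijectivity of $P(\varphi)$ and $P(p\varphi)$.)

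I do not expect a genuine obstacle here: the content is entirely that convenience trivializes the complex, and the rest is bookkeeping. The only points demanding a little attention are the sign of the degree-$0$ component of the comparison map of complexes, and keeping straight the two parallel incarnations of the syntomic complex — the $\Phi$-linear version on $D_{\st,F}$ and the $\sigma$-semilinear $\varphi$-version on $D_{\st,F_0}$ — together with the compatibility of the substitutions $T\mapsto\varphi$, $T\mapsto p\varphi$ with passing to $d$-th powers.
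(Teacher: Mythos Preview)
Your proposal is correct and follows essentially the same approach as the paper's proof: both give the explicit inverse $[(w,x,y)]\mapsto y-Q(\Phi)^{-1}w\bmod\Fil^0 D_{\dR,F}$ for the first isomorphism, and both invoke Lemma~\ref{lemmast2} with $P_1=P$ and $P_2=Q(T^d)/P(T)$ for the second. You supply more detail than the paper (which just calls it an ``easy exercise''), in particular making explicit the intermediate comparison $C^\bullet_{\st,F,Q(T^d)}(D)\hookrightarrow\widetilde C^\bullet_{\st,F,Q}(D)$ via $D_{\st,F_0}\hookrightarrow D_{\st,F}$, and your caution about the sign in degree~$0$ of the comparison map is well placed.
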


\medskip
For $P,Q\in 1+T\cdot L[T]$, we let $P*Q\in 1+T\cdot L[T]$ be the polynomial whose roots are $\{\alpha_i\beta_j\}$ if $\{\alpha_i\}$ and $\{\beta_j\}$ are the roots of $P$ and $Q$ respectively. We then have the following proposition.

\begin{prop}
\label{cupprodst}
Let $D_1,D_2$ be two filtered $(\varphi,N,G_{\Q_p},L)$-modules. Then there are cup products
\[
C^\bullet_{\st,F,P}(D_1)\times C^\bullet_{\st,F,Q}(D_2)\to C^\bullet_{st,F,P*Q}(D_1\otimes_{\Q_p^{nr}\otimes_{\Q_p}L}D_2),
\]
associative and graded-commutative up to homotopy, hence inducing well-defined products on cohomology groups. The same applies to the $F$-lineared variants of such complexes, denoted $\Tilde{C}^\bullet_{\st,F,Q}(-)$ above.
\begin{proof}
See \cite[Proposition 1.3.2]{BLZ2016}.
\end{proof}
\end{prop}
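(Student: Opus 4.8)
The plan is to build the cup product completely explicitly, by first recognising $C^\bullet_{\st,F,Q}(D)$ as a mapping fibre of total complexes. Observe that the relation $N\varphi^n=p^n\varphi^nN$ gives $N\,Q(\varphi)=Q(p\varphi)\,N$, so the three-term complex $D_{\st,F_0}\to D_{\st,F_0}^{\oplus2}\to D_{\st,F_0}$, with differentials $u\mapsto(Q(\varphi)u,Nu)$ and $(w,x)\mapsto Nw-Q(p\varphi)x$, is exactly the total complex $\mathrm{Kos}^\bullet_Q(D)$ of the commuting square with horizontal maps $Q(\varphi)$ (top), $Q(p\varphi)$ (bottom) and vertical maps $N$. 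Then $C^\bullet_{\st,F,Q}(D)$ is the mapping fibre of the morphism $\mathrm{Kos}^\bullet_Q(D)\oplus[\Fil^0D_{\dR,F}]\to[D_{\dR,F}]$ (both $D_{\dR,F}$-terms placed in degree $0$) sending the canonical image of $\mathrm{Kos}^0$ and the inclusion of $\Fil^0D_{\dR,F}$ to their difference in $D_{\dR,F}$. Every part of the product structure will then be assembled from the standard K\"unneth/Koszul machinery for total complexes, together with one multiplicativity input for each ``direction'' of this presentation.

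The three inputs are: \textbf{(monodromy)} the Leibniz rule $N_{D_1\otimes D_2}=N_{D_1}\otimes1+1\otimes N_{D_2}$, which powers the standard sign-twisted Koszul product in the $N$-direction; \textbf{(Hodge)} the facts that $D\mapsto D_{\dR,F}$ is lax symmetric monoidal, that $\Fil^0\otimes\Fil^0$ lands in $\Fil^0$ under $D_{1,\dR}\otimes_FD_{2,\dR}\to(D_1\otimes D_2)_{\dR}$, and that the canonical maps $D_{\st,F_0}\to D_{\dR,F}$ respect this multiplication, so that the Hodge part of the product and the gluing term are the obvious ones; and \textbf{(Frobenius)} --- the one new ingredient, and the reason $P*Q$ appears --- the polynomial identity
\[
(P*Q)(T_1T_2)=\mathcal{A}(T_1,T_2)\,P(T_1)+\mathcal{B}(T_1,T_2)\,Q(T_2)\quad\text{in }L[T_1,T_2],
\]
proved by iterating the elementary identity $1-abT_1T_2=(1-aT_1)+aT_1(1-bT_2)$ over the linear factors of $P$ and $Q$ (telescoping: once $(P'*Q)(T_1T_2)$ is in the desired form, multiply by the factor identity $(P*Q)(T_1T_2)=(P'*Q)(T_1T_2)\cdot Q(aT_1T_2)$ and use that $Q(aT_1T_2)-Q(T_2)$ is divisible by $1-aT_1$). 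Substituting the commuting operators $T_1=\varphi_{D_1}\otimes1$, $T_2=1\otimes\varphi_{D_2}$ decomposes $(P*Q)(\varphi_{D_1\otimes D_2})$ into a piece factoring through $P(\varphi_{D_1})$ and a piece through $Q(\varphi_{D_2})$; substituting $pT_i$ gives the matching decomposition of $(P*Q)(p\varphi)$, and the two are intertwined by $N$ since everything in sight is a polynomial in the $\varphi$'s. The operators $\mathcal{A}$, $\mathcal{B}$ are precisely the components of the cup product in the mixed Frobenius bidegrees.

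With these in hand I would define the pairing $C^\bullet_{\st,F,P}(D_1)\times C^\bullet_{\st,F,Q}(D_2)\to C^\bullet_{\st,F,P*Q}(D_1\otimes D_2)$ bidegree by bidegree: plain multiplication in Frobenius- and monodromy-degree $0$; the Koszul product (with signs) in the $N$-direction; $\mathcal{A}$ and $\mathcal{B}$ in the mixed Frobenius bidegrees; the $D_{\dR,F}$-multiplication on the Hodge part. One then checks it is a morphism of complexes --- a routine but sign-heavy verification in which the Koszul/Leibniz relations for $N$, the defining identity of $\mathcal{A},\mathcal{B}$, and $\Fil$-multiplicativity are exactly what is needed --- and that its components in total degree $\geq3$ vanish, so that it genuinely lands in the three-term complex $C^\bullet_{\st,F,P*Q}(D_1\otimes D_2)$; passing to cohomology is then immediate.

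The hard part is the last clause: that these products are associative and graded-commutative \emph{up to homotopy}. Neither holds on the nose. The Koszul product is only (co)associative and (co)commutative up to the standard shuffle homotopies; and the Frobenius decomposition is visibly asymmetric, the two ways of writing $1-abT_1T_2$ differing by $aT_1(1-bT_2)-bT_2(1-aT_1)=(1-bT_2)-(1-aT_1)$, which one reads as an explicit null-homotopy, while for triple products the three telescopings of $1-abcT_1T_2T_3$ agree only up to explicit coboundaries. So the real work is to produce these homotopies with all signs, combine them into homotopies of morphisms of the (three- and four-term) complexes involved, and check the coherence needed to conclude associativity and graded-commutativity on cohomology. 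This is exactly what is carried out in \cite[Proposition~1.3.2]{BLZ2016}, and that is the argument I would follow.
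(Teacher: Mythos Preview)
Your proposal is correct and takes essentially the same approach as the paper: the paper's proof is simply a citation of \cite[Proposition~1.3.2]{BLZ2016}, and your sketch (mapping-fibre description, Leibniz rule for $N$, the polynomial identity $(P*Q)(T_1T_2)=\mathcal{A}(T_1,T_2)P(T_1)+\mathcal{B}(T_1,T_2)Q(T_2)$, explicit homotopies) is precisely the construction carried out there, with the explicit formulas matching the table reproduced in Remark~\ref{tableremark}. You have supplied more detail than the paper does, but there is no divergence in method.
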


\begin{rmk}
\label{tableremark}
Here we include the table (taken from \cite{BLZ2016}) giving the recipe to compute the pairing of the above Lemma \ref{cupprodst}. The cup product at the level of complexes is obtained summing up the various contributions.

\begin{equation}
\label{finitepolproduct}
\begin{tabular}{ |c||c|c|c|}
 \hline
  $\times$ & $(u',v')$ & $(w',x',y')$ & $z'$\\
  \hline\hline
 $(u,v)$& $(u\otimes u', v\otimes v')$& $\begin{psmallmatrix}
     b(\varphi_1,\varphi_2)(u\otimes w'),\\
     u\otimes x',\\
     (\lambda u +(1-\lambda)v)\otimes y'
 \end{psmallmatrix}$ & $b(\varphi_1,p\varphi_2)(u\otimes z')$ \\ 
 \hline
 $(w,x,y)$ & $\begin{psmallmatrix}
     a(\varphi_1,\varphi_2)(w\otimes u'),\\
     x\otimes u',\\
     y\otimes(\lambda v'+(1-\lambda)u')
 \end{psmallmatrix}$ & \begin{minipage}{4 cm}
   \centering{$-a(\varphi_1,p\varphi_2)(w\otimes x')$\\$+ b(p\varphi_1,\varphi_2)(x\otimes w')$}  
 \end{minipage} & 0\\
 \hline
$z$ & $a(p\varphi_1,\varphi_2)(z\otimes u')$ & $0$ & $0$\\
 \hline
\end{tabular}
\end{equation}

\medskip
In the table, we have fixed $a(X,Y), b(X,Y)\in L[X,Y]$ such that
\[
P*Q(XY)=a(X,Y)P(X)+b(X,Y)Q(Y)
\]
and $\lambda\in L$. One can check that changing the polynomials $a(X,Y)$ and $b(X,Y)$ or changing $\lambda$ will change the product by a chain homotopy (i.e., the induced pairings on cohomology groups are well-defined). A completely analogous table (replacing $p$ with $q=p^d$ and $\varphi_i$ with $\Phi_i$) describes the product in the $F$-linearized version of the semistable complexes.
\end{rmk}

\subsection{Study of the relevant filtered \texorpdfstring{$(\varphi,N)$}{[(phi,N)]}-modules}
\label{studyphiNsection}
Now we can go back to the setting of section \ref{abeljacobi} to study the filtered $(\varphi,N)$-modules attached to the modular forms $(f,g,h)$ more closely. Recall that 
\[
V^?_\dR(f,g,h):=\D_\dR(V^?(f,g,h))\,.
\]

We let $\breve{f}:=\lambda_{M_1}(f)^{-1}\cdot w_{M_1}(f)$, where $\lambda_{M_1}(f)$ is the pseudo-eigenvalue for the action of $w_{M_1}$ on $f$. It follows from \cite[theorem 1.1]{AL1978} that $\lambda_{M_1}(f)$ is an algebraic number of complex absolute value $1$. We denote by $e_{\breve{f}}$ the idempotent corresponding to $\breve{f}$ by the theory of $p$-stabilized ordinary newforms (cf. \cite[Chapter 4]{Hi1985a}).

\begin{defi}
\label{defidifferentials}
\begin{itemize}
    \item [(i)] For $\xi\in\{g,h'\}$ we let $\omega_\xi\in\Fil^1 V^*_{\dR, Mp^t}(\xi)$ be the element corresponding to $\xi$ under the isomorphism \eqref{fil0fil1}.
    \item [(ii)] Consider the $G_{\Q}$-representation $V_{Mp^t}^*(f')$. We let $\eta_{f'}\in V^*_{\dR,Mp^t}(f')/\Fil^1 V^*_{\dR,Mp^t}(f')$ be the element corresponding under the isomorphism \eqref{vdrsufil1} to the linear functional
\begin{center}
\begin{tikzcd}[row sep = tiny, column sep = small]
& S_k(\Gamma_1(Mp^t),L)[f'^w]\arrow[r] &S_k(M_1p^t,\chi_f^{-1}\omega^{2-k+k_0}\varepsilon_f,L)\arrow[r] & L\\
& \gamma\arrow[r, mapsto] & \mathrm{Tr}_{Mp^t/M_1p^t}(\gamma)\\
&&\delta\arrow[r, mapsto] & a_1(e_{\breve{f}}(\delta))
\end{tikzcd}
\end{center}
Note that this gives rise to a non-trivial functional since $\breve{f}\in S_k(\Gamma_1(Mp^t),L)[f'^w]$.
\end{itemize}
\end{defi}

\begin{rmk}
The fact the such a linear functional actually takes values in $L$ follows from the work of Hida (cf. \cite[Proposition 4.5]{Hi1985a}).
\end{rmk}

\begin{rmk}
\label{F1crys}
Since $f$ is $p$-ordinary, we know that $V_{Mp^t}^*(f)$ is $F_1$-semistable, where $F_1$ is a cyclotomic extension of $\Q_p$ generated by a $p^n$-th root of unity for some $n\geq 0$. More precisely (cf. Remark \ref{f'desc}):
\begin{itemize}
    \item [(i)] if $f$ is the ordinary $p$-stabilisation of a newform $f^\circ$ of level $M_1$, then $V_{Mp^t}^*(f)$ is already crystalline over $\Q_p$;
    \item [(ii)] if $f\in S_2(M_1p,\chi_f,L)$ is a newform, then $V_{Mp^t}^*(f)$ is semistable (but not crystalline) over $\Q_p$;
    \item [(iii)] if $f$ is new of level $M_1p^s$ and $p$-primitive, then $V_{Mp^t}^*(f)$ becomes crystalline over $\Q_p(\zeta_{p^s})$ (where $\zeta_{p^s}$ is a primitive $p^s$-th root of unity).
\end{itemize}
The same remarks apply to $V_{Mp^t}^*(f')$.   
\end{rmk}

\begin{lemma}
\label{decompeta}
The filtered $L$-vector space $V^*_{\dR, Mp^t}(f')$ decomposes, as $L$-vector space, as
\[
V^*_{\dR, Mp^t}(f')=\Fil^1 V^*_{\dR, Mp^t}(f')\oplus\Big(F_1\otimes_{\Q_p}\D_{\st,F_1}\big(V^*_{Mp^t}(f')
\big)^{\varphi=a_p(f)}\Big)^{\Gal(F_1/\Q_p)}
\]
\begin{proof}
In order to simplify the notation, we write $V=V^*_{Mp^t}(f')$, $V_\dR=V^*_{\dR, Mp^t}(f')$ and $V_{\st,F_1}=\D_{\st,F_1}\big(V^*_{Mp^t}(f')
\big)$ in this proof.

It is possible to describe explicitly the structure of filtered $(\varphi,N,\Gal(F_1/\Q_p),L)$-module of $V_{\st,F_1}$. We refer to \cite[Sections 3.1 and 3.2]{GM2009}, where the authors actually describe the duals of our modules.

Combining everything, we can give the following explicit description of $V_{\st,F_1}$. Such module is non-canonically isomorphic to a finite number of copies of a two-dimensional filtered $(\varphi,N,\Gal(F_1/\Q_p),L)$-module with Hodge-Tate weights $\{1-k,0\}$, which we denote by $D$. Clearly, it is enough to prove the corresponding statement of the lemma for $D$.

\medskip
If $V$ is $F_1$-crystalline, then $D$ has a basis $\{e_1,e_2\}$ as $L$-vector space such that
\[
\left\{
\begin{aligned}
    &\varphi(e_1) && = &&& \chi_f(p)p^{k-1}a_p(f)^{-1}\cdot e_1\\
    & \varphi(e_2) && = &&& a_p(f)\cdot e_2\\
    & \Fil^1 D && = &&& (F_1\otimes_{\Q_p} L)(x_f e_1+ y_f e_2)\\
    & N && = &&& 0\\
    & g(e_1) && = &&&  \big(\omega^{2-k}\varepsilon_f\big)(g)\cdot e_1\\
    & g(e_2) && = &&& e_2\\
    & g && \in &&& \Gal(F_1/\Q_p)
    \end{aligned}
    \right.
\]
where $x_f,y_f\in F_1\otimes_{\Q_p}L$ are elements such that either $y_f=0$ and $x_f\neq 0$ (the \emph{split} case) or they are both non-zero (the \emph{non-split} case). These elements are unique up to multiplication by elements of $L^\times$.

\medskip
If $V$ is not $F_1$-crystalline (i.e., $f\in S_2(M_1p,\chi_f)$ newform, under our assumptions), then we can choose $F_1=\Q_p$ and $D$ has a basis $\{e_1,e_2\}$ as $L$-vector space
\[
\left\{
\begin{aligned}
    &\varphi(e_1) && = &&&  p\cdot a_p(f)\cdot e_1\\
    & \varphi(e_2) && = &&& a_p(f)\cdot e_2\\
    & \Fil^1 D && = &&& (F_1\otimes_{\Q_p} L) (e_1 -\mathfrak{L} e_2)\\
    & N(e_1) && = &&& e_2\\
    & N(e_2) && = &&& 0
    \end{aligned}
    \right.
\]
where $\mathfrak{L}=\mathfrak{L}_p(f)$ is the $\mathfrak{L}$-invariant of $f$ (defined as in \cite{Mazur1994}).

\medskip
From the explicit description of the filtration on $D$, it follows easily in all cases that
\[
\Fil^1(D)\cap \big(F_1\otimes_{\Q_p} D^{\varphi=a_p(f)}\big)=\{0\}
\]
and that $D^{\varphi=a_p(f)}$ is one-dimensional (over $L$). We thus get a decomposition of $F_1\otimes_{\Q_p}D$ as $L$-vector space given by
\[
F_1\otimes_{\Q_p}D=\Fil^1 D\oplus \big(F_1\otimes_{\Q_p} D^{\varphi=a_p(f)}\big)
\]
and it follows that such a decomposition is stable for the action of $\Gal(F_1/\Q_p)$ (cf. the discussion in \cite[Section 3.2]{GM2009}) and gives an analogous decomposition for $F_1\otimes_{\Q_p}V_{\st,F_1}$. 

Taking $\Gal(F_1/\Q_p)$-invariants yields
\begin{align*}
V_\dR & =\big(F_1\otimes_{\Q_p}V_{\st,F_1}\big)^{\Gal(F_1/\Q_p)}=\\
 & =\big(\Fil^1 V_{\dR,F_1}\big)^{\Gal(F_1/\Q_p)}\oplus \big(F_1\otimes_{\Q_p} D^{\varphi=a_p(f)}\big)^{\Gal(F_1/\Q_p)} =\\
& = \Fil^1 V_\dR \oplus \big(F_1\otimes_{\Q_p} D^{\varphi=a_p(f)}\big)^{\Gal(F_1/\Q_p)}\, ,
\end{align*}
as we wished to prove.
\end{proof}
\end{lemma}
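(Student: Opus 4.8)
The plan is to reduce the assertion to a computation on a single two-dimensional filtered $(\varphi,N,\Gal(F_1/\Q_p),L)$-module and then read the decomposition off its explicit shape. First I would invoke remark \ref{F1crys}: since $f$, hence $f'$, is $p$-ordinary, there is a cyclotomic field $F_1=\Q_p(\zeta_{p^n})$ over which $V^*_{Mp^t}(f')$ becomes semistable, and $\D_{\st,F_1}\bigl(V^*_{Mp^t}(f')\bigr)$ is non-canonically a finite direct sum of copies of one two-dimensional filtered $(\varphi,N,\Gal(F_1/\Q_p),L)$-module $D$ with Hodge--Tate weights $\{1-k,0\}$, namely the one attached (up to twist and an Atkin--Lehner operator) to the newform $(f')^\circ$. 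All the operations appearing in the statement -- base change to $F_1$, passage to $\Fil^1$, passage to the $\varphi=a_p(f)$-eigenspace, and $\Gal(F_1/\Q_p)$-invariants -- commute with finite direct sums, so it suffices to prove $F_1\otimes_{\Q_p}D=\Fil^1 D\oplus\bigl(F_1\otimes_{\Q_p}D^{\varphi=a_p(f)}\bigr)$ as $L$-vector spaces, with both summands stable under $\Gal(F_1/\Q_p)$.

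Next I would write $D$ down explicitly following \cite[sections 3.1--3.2]{GM2009}, distinguishing the crystalline case (the ordinary $p$-stabilization of a newform of level prime to $p$, or a $p$-primitive form) from the Steinberg case ($k=2$ with $f$ a newform of level $M_1p$, where $F_1=\Q_p$ and $N\neq0$). In each case $D$ admits an $L$-basis $\{e_1,e_2\}$ with $e_2$ spanning a $\varphi$-eigenline of unit eigenvalue $a_p(f)$ and $e_1$ spanning a $\varphi$-eigenline of eigenvalue $\chi_f(p)p^{k-1}a_p(f)^{-1}$ (crystalline case) or $pa_p(f)$ (Steinberg case); moreover $\Fil^1 D$ is a line of $F_1\otimes_{\Q_p}D$ transverse to $(F_1\otimes_{\Q_p}L)e_2$ -- it is $(F_1\otimes_{\Q_p}L)e_1$ in the \emph{split} sub-case, $(F_1\otimes_{\Q_p}L)(x_fe_1+y_fe_2)$ in the \emph{non-split} sub-case, and $L(e_1-\mathfrak{L}e_2)$ in the Steinberg case -- while $\Gal(F_1/\Q_p)$ fixes $e_2$ and acts on $e_1$ through a character.

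The linear algebra is then short and uniform across cases. The two $\varphi$-eigenvalues of $D$ are distinct, and this is exactly where $p$-ordinarity of $f$ is used (together with $k\geq2$, which holds by assumption \ref{balselfdual}(iv)): $v(a_p(f))=0$ whereas the other eigenvalue has valuation $k-1\geq1$ in the crystalline case and $1$ in the Steinberg case. Hence $D^{\varphi=a_p(f)}=Le_2$ is one-dimensional over $L$, so both $F_1\otimes_{\Q_p}D^{\varphi=a_p(f)}$ and $\Fil^1 D$ are free of rank one over $F_1\otimes_{\Q_p}L$; since their intersection inside the rank-two free module $F_1\otimes_{\Q_p}D$ is zero, their sum is the whole module, which gives the displayed decomposition. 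Both summands are $\Gal(F_1/\Q_p)$-stable -- $\Fil^1 D$ because the filtration is $G_{\Q_p}$-stable, and $F_1\otimes_{\Q_p}D^{\varphi=a_p(f)}$ because $\Gal(F_1/\Q_p)$ commutes with $\varphi$ -- so by Galois descent, the invariants functor being exact and a quasi-inverse to extension of scalars $F_1\otimes_{\Q_p}(-)$ on $F_1$-semilinear objects, the decomposition descends to $\Gal(F_1/\Q_p)$-invariants. Reassembling the copies of $D$, the $\Fil^1$ part becomes $\Fil^1 V^*_{\dR,Mp^t}(f')$ and the eigenline part becomes the second term of the statement.

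The only non-formal step, which I expect to be the main obstacle, is the transversality $\Fil^1 D\cap\bigl(F_1\otimes_{\Q_p}D^{\varphi=a_p(f)}\bigr)=\{0\}$ in the \emph{non-split} crystalline case and the Steinberg case, where a generator of $\Fil^1 D$ has both coordinates nonzero; I would obtain it from the explicit classification, the conceptual reason being weak admissibility of $D$ -- the slope-zero eigenline $Le_2$ is a $(\varphi,N)$-sub-object with $t_N=0$, so were it contained in $\Fil^1 D$ it would have $t_H\geq1$, contradicting $t_H\leq t_N$. Everything else -- the reduction to $D$, the eigenvalue count and the descent -- is routine.
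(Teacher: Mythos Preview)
Your proposal is correct and follows essentially the same approach as the paper: reduce to a single two-dimensional filtered $(\varphi,N,\Gal(F_1/\Q_p),L)$-module $D$ via \cite{GM2009}, write down explicit bases in the crystalline and Steinberg cases, check that $D^{\varphi=a_p(f)}$ is a line transverse to $\Fil^1 D$, and descend through $\Gal(F_1/\Q_p)$-invariants. Your observation that the transversality can also be seen conceptually via weak admissibility (the slope-zero sub-object $Le_2$ would violate $t_H\leq t_N$ if it lay in $\Fil^1$) is a nice addition not present in the paper, which relies purely on the explicit classification.
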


\medskip
\begin{rmk}
We observe that the decomposition proved in the above Lemma \ref{decompeta} does not depend on the choice of $F_1\subset \Q_p(\mu_{p^\infty})$ (Galois over $\Q_p$) such that $V^*_{Mp^t}(f')$ is $F_1$-semistable, i.e., if $F_2$ is another such extension we can identify
\[
\Big(F_1\otimes_{\Q_p} \D_{\st,F_1}\big(V^*_{Mp^t}(f')\big)^{\varphi=a_p(f)}\Big)^{\Gal(F_1/\Q_p)}=\Big(F_2\otimes_{\Q_p} \D_{\st,F_2}\big(V^*_{Mp^t}(f')\big)^{\varphi=a_p(f)}\Big)^{\Gal(F_2/\Q_p)}.
\]
This follows easily from the fact that the action of $\varphi$ and the Galois action commute and from Hilbert Theorem 90.
\end{rmk}

\medskip
\begin{defi}
\label{etafrobp}
With the notation introduced above, we define
\[
\eta_{f'}^{\varphi=a_p}\in\Big(F_1\otimes_{\Q_p} \D_{\st,F_1}\big(V^*_{Mp^t}(f')\big)^{\varphi=a_p(f)}\Big)^{\Gal(F_1/\Q_p)}\subset V^*_{\dR, Mp^t}(f')
\]
as the unique lift of $\eta_{f'}$ to the subspace $\Big(F_1\otimes_{\Q_p} \D_{\st,F_1}\big(V^*_{Mp^t}(f')\big)^{\varphi=a_p(f)}\Big)^{\Gal(F_1/\Q_p)}$.
\end{defi}

\begin{rmk}
Since the triple $(k,l,m)$ is balanced, we have that 
\begin{equation}
\label{testtriplecohom}
\eta_{f'}^{\varphi=a_p}\otimes\omega_g\otimes\omega_{h'}\otimes t_{r+2}\in\Fil^0(V^*_\dR(f,g,h)).  
\end{equation}
Here, for all $n\in\Z$, $t_n$ denotes a (canonical) generator of $\D_\dR(\Q_p(n))$ (on which the Frobenius $\varphi$ acts as multiplication by $p^{-n}$).    
\end{rmk}

\medskip
The idea is now to associate to $\kappa(f,g,h)$ an element in $\Fil^0(V_\dR(f,g,h))^\vee$, so that we can pair it with $\eta_{f'}^{\varphi=a_p}\otimes\omega_g\otimes\omega_{h'}\otimes t_{r+2}$. We are led to study the Bloch--Kato local conditions for the Galois representation $V(f,g,h)$. 

\begin{defi}
\label{expconvtriple}
A triple $(f,g,h)$ satisfying Assumption \ref{balselfdual} is called $F$-\textbf{exponential} if the equality
\[
H^1_e(F,V(f,g,h))=H^1_f(F,V(f,g,h))=H^1_g(F,V(f,g,h))
\]
holds (for an appropriate $F$ depending on $(f,g,h)$).

We say that a triple $(f,g,h)$ satisfying Assumption \ref{balselfdual} is $(F,Q)$-\textbf{convenient} for a finite extension $F/\Q_p$ and $Q\in 1+ T\cdot L[T]$ if the $(\varphi,N,G_{\Q_p},L)$-module $\D_\pst(V(f,g,h))$ is $(F,Q)$-convenient, in the sense of definition \ref{convenient}.
\end{defi}

\begin{rmk}
It is easy to check that if $(f,g,h)$ is $(F,1-T)$-convenient, then it is $F$-exponential.
\end{rmk}

As already explained in the introduction, in the sequel we will be mostly interested in the following setting.

\begin{ass}
\label{cuspasslocp}
The forms $g$ and $h$ are supercuspidal at $p$ and lie in the kernel of $U_p$. In particular (since $p$ is odd), the Galois representations $V_{Mp^t}(g)$ and $V_{Mp^t}(h)$, seen as local Galois representations of $G_{\Q_p}$, are isomorphic to (finitely many copies of) the induced representations of a character of a quadratic extension of $\Q_p$ (which is not the restriction of a character of $G_{\Q_p}$). 
\end{ass}

\begin{rmk}
Asking that $g$ and $h$ are supercuspidal at $p$ implies directly that $g$ and $h$ lie in the kernel of $U_p^m$ for some $m$ large enough, since we are not assuming that $g$ and $h$ are new of level $Mp^t$. The slightly stronger Assumption \ref{cuspasslocp} will be needed to simplify some arguments in what follows.
\end{rmk}

\begin{rmk}
\label{phigammagh}
As explained in \cite[Sections 3.3 and 3.4]{GM2009}, under our Assumption \ref{cuspasslocp} we can find a finite Galois extension of $\Q_p$, which we denote by $F$, such that:
\begin{itemize}
    \item [(i)] the maximal unramified subextension of $F$ is of the form $F_0:=\Q_{p^{2a}}$ for some $a\in\Z_{\geq 1}$;
    \item [(ii)] $F$ contains the cyclotomic extension $F_1$ of Remark \ref{F1crys};
    \item [(iii)] $F$ is contained in our field of coefficients $L$ (up to extending $L$ if necessary);
    \item [(iv)] $V_{Mp^t}(g)$, $V_{Mp^t}(h)$ and $V_{Mp^t}(h')$ are $F$-crystalline.
\end{itemize}

Moreover, one can describe the filtered $(\varphi,N,\Gal(F/\Q_p),L)$-modules associated to $V_{Mp^t}(\xi)$ for $\xi\in\{g,h,h'\}$ as follows (always relying on \cite[Sections 3.3 and 3.4]{GM2009}). Such modules are given by (finitely many copies of) a rank $2$ free $F_0\otimes_{\Q_p}L$-module $D_\xi$ with basis $\{v_\xi,w_\xi\}$ and such that
\[
\left\{
\begin{aligned}
    &\varphi(v_\xi) && = &&& \mu_\xi\cdot v_\xi \\
    & \varphi(w_\xi) && = &&& \mu_\xi\cdot w_\xi\\
    & N && = &&& 0\\
    & \mu_\xi && \in  &&&L^\times\\
    &\ord_p(\mu_\xi) && = &&&\tfrac{1-\nu}{2}
        \end{aligned}
    \right.
\]
where $\nu$ denotes the weight of the form $\xi$. Note that $\mu_h=\mu_{h'}$ since $h'$ is a twist of $h$ by a character of order a power of $p$.  

\medskip
One could also give a more explicit description of the Galois group $\Gal(F/\Q_p)$ and of its action on such modules, but we will not need it for the moment. The same remark applies to the filtration on $F\otimes_{F_0}D_\xi$.

\medskip
Recall that the Frobenius endomorphism $\varphi$ is $F_0$-semilinear, so that if we want to look at it as a linear operator on $D_\xi$, we have to view $D_\xi$ as an $L$-vector space of dimension $4a$.    
\end{rmk}

\begin{prop}
\label{efgequal}
Under Assumptions \ref{balselfdual} and \ref{cuspasslocp}, we have that:
\begin{itemize}
    \item [(a)] if the form $f$ has weight $k>2$, then the triple $(f,g,h)$ is $(F,1-T)$-convenient (for $F$ as in Remark \ref{phigammagh});
    \item [(b)] if the form $f$ is a newform in $S_2(M_1p,\chi_f,L)$, then the triple $(f,g,h)$ is $F$-exponential.
\end{itemize}
\begin{proof}
The inclusions
\[
H^1_e(F,V(f,g,h))\subseteq H^1_f(F,V(f,g,h))\subseteq H^1_g(F,V(f,g,h))
\]
are always true. Since $V(f,g,h)$ is Kummer self-dual (by design), in order to show that $(f,g,h)$ is $F$-exponential, it is enough to prove that $H^1_e(F,V(f,g,h))= H^1_f(F,V(f,g,h))$. As a consequence of \cite[corollary 3.8.4]{BK1990}, we are then reduced to show that
\[
\D_{\cris,F}(V(f,g,h))^{\varphi=1}=\{0\}\, .
\]
We will write
\[
D_{fgh}:=D_{f'}\otimes_{\tilde{L}}D_g\otimes_{\tilde{L}} D_{h'}\otimes_{\tilde{L}} (\tilde{L}\cdot t_{-1-r})\,,
\]
where $\tilde{L}:=F_0\otimes_{\Q_p}L$ and recall that $r=(k+l+m-6)/2$ and that $\varphi(t_{-1-r})=p^{r+1} t_{-1-r}$.

Here $D_{f'}$ is, up to extending scalars to $F_0$, the dual of the module $D$ appearing in the proof of Lemma \ref{decompeta}. We can fix an $\tilde{L}$-basis of $D_{f'}$ given by $\{v_f,w_f\}$ (essentially passing to the dual basis of the one described in the proof of Lemma \ref{decompeta}), such that
\[
\varphi(v_f)=\chi_f(p)^{-1}p^{1-k}a_p(f)\cdot v_f\, ,\qquad \varphi(w_f)=a_p(f)^{-1}\cdot w_f\,,\qquad N=0
\]
if $V(f)$ is $F_1$-crystalline and
\[
\varphi(v_f)=a_p(f)^{-1}\cdot v_f\, ,\quad\varphi(w_f)=(p a_p(f))^{-1}\cdot w_f\, ,\qquad N(v_f)=-w_f\,\quad N(w_f)=0
\]
if $f$ is a newform in $S_2(M_1p,\chi_f,L)$.

\medskip
Since $\D_{\st,F}(V(f,g,h))$ is isomorphic to a finite direct sum of copies of $D_{fgh}$, it is enough to prove that $D_{fgh}^{\varphi=1,N=0}=\{0\}$. We will look at $D_{fgh}$ as an $L$-vector space of dimension $16a$ where $\varphi$-acts $L$-linearly.

We fix $\alpha\in L$, a primitive $p^{2a}-1$-th root of $1$, so that $F_0=\Q_p(\alpha)$ and the arithmetic Frobenius $\sigma$ (i.e., the generator of $\Gal(F_0/\Q_p)$ inducing the arithmetic Frobenius modulo $p$) acts on $\alpha$ simply as $\sigma(\alpha)=\alpha^p$.

By our previous discussion we know that a basis $\mathscr{B}$ of $D_{fgh}$ as $L$-vector space can be described as follows:
\[
\mathscr{B}:=\big\{(\alpha^{p^j}\otimes 1)\cdot e_\tau \mid j=0,\dots,2a-1,\quad \tau\in\{v,w\}^{ \{1,2,3\}} \big\}
\]
where for $\tau:\{1,2,3\}\to\{v,w\}$ we let
\[
e_\tau=\tau(1)_f\otimes\tau(2)_g\otimes\tau(3)_{h'}\otimes t_{-1-r}\,.
\]

\medskip
Now we prove assertion (a) in the statement. Since $k>2$, $V_{Mp^t}(f)$ is $F$-crystalline (so that $V(f,g,h)$ is $F$-crystalline) and we have
\[
\varphi((\alpha^{p^j}\otimes 1)\cdot e_\tau):=\begin{cases}
\alpha^{(p-1)p^j}\cdot p^{r+2-k}\chi_f(p)^{-1}\cdot a_p(f)\cdot\mu_g\mu_{h'}\cdot((\alpha^{p^j}\otimes 1)\cdot e_\tau) &\text{ if }\tau(1)=v,\\
\alpha^{(p-1)p^j}\cdot p^{r+1}\cdot a_p(f)^{-1}\cdot \mu_g\mu_{h'}\cdot((\alpha^{p^j}\otimes 1)\cdot e_\tau)& \text{ if }\tau(1)=w.
\end{cases}
\]
If we let $\lambda_{j,\tau}$ to be the $\varphi$-eigenvalue relative to $(\alpha^{p^j}\otimes 1)\cdot e_\tau$ as described above, we can immediately compute that
\[
|\lambda_{j,\tau}|_p=\begin{cases}
    p^{k/2} &\text{ if }\tau(1)=v,\\
    p^{1-k/2} &\text{ if }\tau(1)=w.
\end{cases}
\]
so that if we assume $k>2$ it cannot happen that $\lambda_{j,\tau}=1$ or $\lambda_{j,\tau}=p^{-1}$. We have thus proven that $(f,g,h)$ is $(F,1-T)$-convenient (and in particular $F$-exponential).

\medskip
For assertion (b), note that $N((\alpha^{p^j}\otimes 1)\cdot e_\tau)=0$ if and only if $\tau(1)=w$. Assuming $\tau(1)=w$ we get
\[
\varphi((\alpha^{p^j}\otimes 1)\cdot e_\tau):=\alpha^{(p-1)p^j}\cdot p^r\cdot a_p(f)^{-1}\cdot \mu_g\mu_{h'}\cdot((\alpha^{p^j}\otimes 1)\cdot e_\tau)\,.
\]
and one checks that
\[
|\alpha^{(p-1)p^j}\cdot p^r\cdot a_p(f)^{-1}\cdot \mu_g\mu_{h'}|_p=p,
\]
so that also in this case $D_{fgh}^{\varphi=1,N=0}=\{0\}$ and the proof is complete.
\end{proof}
\end{prop}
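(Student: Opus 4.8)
The plan is to reduce both assertions to a vanishing statement for a $\varphi$-invariant subspace of a concrete filtered $(\varphi,N)$-module, and then to settle that statement by a $p$-adic valuation count on an explicit basis.

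First I would record the standing inclusions $H^1_e(F,V(f,g,h))\subseteq H^1_f(F,V(f,g,h))\subseteq H^1_g(F,V(f,g,h))$. Since $V(f,g,h)$ is Kummer self-dual by construction, local Tate duality identifies $H^1_g(F,V(f,g,h))$ with the annihilator of $H^1_e(F,-)$ of an isomorphic copy of the same representation, so proving $F$-exponentiality amounts to proving $H^1_e(F,V(f,g,h))=H^1_f(F,V(f,g,h))$; by \cite[corollary 3.8.4]{BK1990} this is in turn equivalent to $\D_{\cris,F}(V(f,g,h))^{\varphi=1}=0$. Since $\D_{\st,F}(V(f,g,h))$ is a finite direct sum of copies of the module $D_{fgh}:=D_{f'}\otimes_{\tilde{L}}D_g\otimes_{\tilde{L}}D_{h'}\otimes_{\tilde{L}}(\tilde{L}\cdot t_{-1-r})$ with $\tilde{L}:=F_0\otimes_{\Q_p}L$, the claim reduces to showing $D_{fgh}^{\varphi=1,\,N=0}=0$. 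For assertion (a) I additionally need $1-\Phi$ and $1-q\Phi$ to be bijective on $D_{\st,F}$ (where $\Phi=\varphi^{2a}$ and $q=p^{2a}$); as $D_{fgh}$ is $F$-crystalline and finite-dimensional this is once more an eigenvalue condition, and it will follow from the same valuation estimates — it suffices that every $\varphi$-eigenvalue on $D_{fgh}$ have $p$-adic absolute value different from both $|1|_p=1$ and $|p^{-1}|_p=p$.

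Next I would assemble the explicit model of $D_{fgh}$. The module $D_{f'}$ is, after extension of scalars to $\tilde{L}$, the dual of the two-dimensional module $D$ occurring in the proof of Lemma \ref{decompeta}: it has a basis $\{v_f,w_f\}$ on which, in the $F_1$-crystalline case, $\varphi$ acts by $\chi_f(p)^{-1}p^{1-k}a_p(f)$ on $v_f$ and by $a_p(f)^{-1}$ on $w_f$ with $N=0$, while in the weight-$2$ newform case $\varphi$ acts by $a_p(f)^{-1}$ on $v_f$ and by $(p\,a_p(f))^{-1}$ on $w_f$ with $N(v_f)=-w_f$ and $N(w_f)=0$. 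By Remark \ref{phigammagh}, $D_g$ and $D_{h'}$ are copies of rank-two $\tilde{L}$-modules with basis $\{v_\xi,w_\xi\}$ on which $\varphi$ acts by a scalar $\mu_\xi\in L^\times$ with $N=0$, and $\ord_p(\mu_g)=\tfrac{1-l}{2}$, $\ord_p(\mu_{h'})=\tfrac{1-m}{2}$; finally $\varphi(t_{-1-r})=p^{r+1}t_{-1-r}$. Fixing $\alpha\in L$ a primitive $(p^{2a}-1)$-th root of unity, so that $F_0=\Q_p(\alpha)$ and $\sigma(\alpha)=\alpha^p$, I obtain an explicit $L$-basis of $D_{fgh}$ given by the vectors $(\alpha^{p^j}\otimes1)\cdot e_\tau$ for $j=0,\dots,2a-1$ and $\tau\in\{v,w\}^{\{1,2,3\}}$, where $e_\tau:=\tau(1)_f\otimes\tau(2)_g\otimes\tau(3)_{h'}\otimes t_{-1-r}$; on these the ($L$-linear, $F_0$-semilinear) Frobenius acts diagonally up to the shift $\alpha^{p^j}\mapsto\alpha^{p^{j+1}}$, with an explicit scalar $\lambda_{j,\tau}$.

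Finally I would run the valuation count. Because $\alpha^{(p-1)p^j}$ is a root of unity and $\ord_p(a_p(f))=0$ ($f$ being $p$-ordinary), one computes in case (a) that $|\lambda_{j,\tau}|_p=p^{k/2}$ when $\tau(1)=v$ and $|\lambda_{j,\tau}|_p=p^{1-k/2}$ when $\tau(1)=w$; for $k>2$ neither of these equals $1$ or $p$, so $\lambda_{j,\tau}\notin\{1,p^{-1}\}$, and the same bound propagates to $\Phi=\varphi^{2a}$ and to $q\Phi$, giving $D_{fgh}^{\varphi=1,N=0}=0$ together with the bijectivity of $1-\Phi$ and $1-q\Phi$, i.e.\ $(F,1-T)$-convenience. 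In case (b), the constraint $N=0$ forces $\tau(1)=w$ because $N(v_f)=-w_f\ne0$, and then $|\lambda_{j,\tau}|_p=p\ne1$, so again $D_{fgh}^{\varphi=1,N=0}=0$ and $(f,g,h)$ is $F$-exponential. I expect the only real difficulty to be bookkeeping: handling the $\sigma$-semilinearity of $\varphi$ over $F_0\otimes_{\Q_p}L$ correctly when $D_{fgh}$ is viewed as a $16a$-dimensional $L$-vector space, and keeping careful track of the powers of $p$ contributed by the Tate twist $t_{-1-r}$ and by the three constituents, so that the final absolute values $p^{k/2}$ and $p^{1-k/2}$ come out as stated.
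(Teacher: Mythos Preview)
Your proposal is correct and follows essentially the same approach as the paper: reduce via Kummer self-duality and \cite[corollary 3.8.4]{BK1990} to the vanishing of $D_{fgh}^{\varphi=1,N=0}$, diagonalise $\varphi$ over the explicit $L$-basis $(\alpha^{p^j}\otimes 1)\cdot e_\tau$, and conclude by the $p$-adic valuation count. Your treatment is in fact slightly more careful than the paper's on one point: you note explicitly that the $(F,1-T)$-convenience condition concerns $\Phi=\varphi^{2a}$ and $q\Phi$ rather than $\varphi$ itself, and that the absolute-value estimate on the $\varphi$-eigenvalues transfers to the required statement for $\Phi$.
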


\begin{rmk}
\label{princseriescase}
The triple $(f,g,h)$ is $(F,1-T)$-convenient also when the following condition is satisfied:
\begin{itemize}
\item [(PS)] the newforms of level dividing $Mp^t$ attached to $g$ and $h$ are $p$-primitive newforms (i.e., the power of $p$ dividing their level and the conductor of their character is the same).
\end{itemize} 
Equivalently, we are assuming that the newforms attached to $g$ and $h$ are either unramified principal series at $p$ (if their level is coprime to $p$) or they are ramified principal series at $p$ and have finite $p$-slope (otherwise).

In this case, the proof relies on the control of the complex absolute values of the eigenvalues of Frobenius (following from the Ramanujan--Petersson conjecture) and one can fix $F\subset\Q_p(\mu_{p^\infty})$ a finite, totally ramified, cyclotomic extension of $\Q_p$ generated by a $p^n$-th root of unity (for some $n\in\Z_{\geq 0}$ large enough) - so that in particular $F_0=\Q_p$. The proof is identical to that of \cite[Lemma 3.5]{BSVast1} and the proof of Lemma \ref{admissiblelemma} below contains (part of) the required computations, so we omit the details here.
\end{rmk}

When $(f,g,h)$ is $F$-exponential, \cite[Corollary 3.8.4]{BK1990} shows that the Bloch--Kato exponential map
\begin{equation}
\label{BKexpF}
\exp_{\BK,F}: \frac{\D_{\dR,F}(V(f,g,h))}{\Fil^0 \D_{\dR,F}(V(f,g,h))}\to H^1_e(F,V(f,g,h))=H^1_g(F,V(f,g,h))    
\end{equation}
is an isomorphism.

\medskip
The perfect duality \eqref{dRduality} induces a perfect duality
\begin{equation}
\label{dRdualityfgh}
\langle -,-\rangle_{fgh}: V_\dR(f,g,h)\otimes_L V^*_\dR(f,g,h)\to \D_\dR(L(1))=L,
\end{equation}
under which one has an identification
\[
V_\dR(f,g,h)/\Fil^0(V_\dR(f,g,h))\cong\Fil^0(V^*_\dR(f,g,h))^\vee.
\]

Moreover, note that we have identifications
\[
\D_{\dR,F}(V(f,g,h))=F\otimes_{\Q_p}V_\dR(f,g,h)\,,\qquad \Fil^0 \D_{\dR,F}(V(f,g,h))=F\otimes_{\Q_p}\Fil^0 V_\dR(f,g,h)
\]
yielding an isomorphism
\begin{equation}
\label{derhamid}
\frac{\D_{\dR,F}(V(f,g,h))}{\Fil^0 \D_{\dR,F}(V(f,g,h))}\cong F\otimes_{\Q_p}\frac{V_\dR(f,g,h)}{\Fil^0(V_\dR(f,g,h))}.    
\end{equation}

\medskip
\begin{defi}
Let $(f,g,h)$ be an $F$-exponential triple (for some finite Galois extension $F/\Q_p$). We define the Bloch--Kato logarithm
\[
\log_\BK^{fgh}: H^1_g(\Q_p,V(f,g,h))\to \Fil^0(V^*_\dR(f,g,h))^\vee
\]
as the following composition (we write $V=V(f,g,h)$ for simplicity):
\begin{center}
\begin{tikzcd}[column sep = huge]
H^1_g(\Q_p,V)\arrow[r, "\res_{F/\Q_p}", "\cong"'] & H^1_g(F,V)^{\Gal(F/\Q_p)}\arrow[r, "\tfrac{1}{[F:\Q_p]}\cdot \exp_{\BK,F}^{-1}"]&\frac{V_\dR}{\Fil^0(V_\dR)}\cong\Fil^0(V^*_\dR)^\vee\,. 
\end{tikzcd}
\end{center}
\end{defi}

\begin{rmk}
\label{BKremark}
The first isomorphism $\res_{F/\Q_p}$ above follows from the Hochschild--Serre spectral sequence in continuous group cohomology and the fact that the representations involved are vector spaces over characteristic zero fields ($\B_\dR$ is a field). Note that the map $\log_{BK}^{fgh}$ is actually independent of $F$ (i.e., if $F'$ is another finite Galois extension of $\Q_p$ such that $V(f,g,h)$ is $F'$-exponential, we obtain the same map via our construction). Note, moreover, that we are crucially using the isomorphism \eqref{derhamid} and the fact that the exponential map \eqref{BKexpF} is equivariant for the action of $\Gal(F/\Q_p)$ (so that it induces and isomorphism between the Galois invariants on both sides).

\medskip
In the next section, we will show that $\kappa(f,g,h)\in H_g(\Q_p,V(f,g,h))$, so that one can indeed view $\log_\BK^{fgh}(\kappa(f,g,h))$ as a linear functional on $\Fil^0(V^*_\dR(f,g,h))$ and study its value at 
\[
\eta_{f'}^{\varphi=a_p}\otimes\omega_g\otimes\omega_{h'}\otimes t_{r+2}\in\Fil^0(V^*_\dR(f,g,h))\,.
\]
\end{rmk}


\section{The syntomic Abel--Jacobi map}
\label{syntomicabeljacobichapt}
This section develops the formalism of syntomic and finite polynomial cohomology, with the aim of giving a syntomic description of the Abel--Jacobi map in our setting. We also recall the needed facts about the Coleman's study of the geometry of modular curves as rigid analytic spaces. 

\subsection{Nekov\'{a}\v{r}--Nizio\l{} syntomic cohomology}
\label{NNsyntomicsection}
In this section we recollect some facts concerning syntomic (and finite-polynomial) cohomology for varieties over $p$-adic fields. 

We will have to consider recent extensions of the original theory for varieties over $p$-adic fields admitting a smooth integral model (due, among others, to Besser, cf. \cite{Bes2000}), since the modular curves $X_1(Mp^t)$ (for $t\geq 1$) only admit a semistable model over the ring of integers of a (large enough) finite extension of $\Q_p$.

\medskip
As in the previous sections, let $F$ denote a fixed finite extension of $\Q_p$, with ring of integers $\OO_F$ and residue field $k_F$ of cardinality $q=p^d$. We fix a choice of a uniformizer $\varpi\in\OO_F$. Let $F_0=W(k_F)[1/p]$ denote the maximal unramified subextension of $\Q_p$ inside $F$.

\medskip
Let $X$ be any $F$-variety (i.e., a reduced separated $F$-scheme of finite type). For every $n\in\Z$, Nekov\'{a}\v{r}--Nizio\l{} (\cite{NN2016}) define complexes $R\Gamma_{\NN}(X,n)$ and $R\Gamma_{\NN,c}(X,n)$ which compute syntomic cohomology, generalizing Besser's work. The main properties of this cohomology theory are collected in \cite[Theorem A]{NN2016}.  

\medskip
In \cite{BLZ2016}, the authors extend the constructions of Nekov\'{a}\v{r}--Nizio\l{} and define cohomology theories $R\Gamma_{\NN}(X,n,P)$ and $R\Gamma_{\NN,c}(X,n,P)$ for any polynomial $P\in 1+T\cdot \Q_p[T]$, which reduce to the theory of \cite{NN2016} when $P(T)=1-T$.

\begin{rmk}
\label{NNsyntomicfeaturesrmk}
In the following list we recall some features of Nekov\'{a}\v{r}--Nizio\l{} syntomic $P$-cohomology that we will need in the sequel.
\begin{itemize}
    \item [(i)] For $P\in 1+T\cdot\Q_p[T]$, $n\in\Z$ and $?\in\{c,\emptyset\}$, there is a spectral sequence
    \begin{equation}
    \label{etaletosyntomicspectralsequencetrivial}  E_2^{i,j}:=H^i_{\st,P}\Big(F,\D_\pst\big(H^j_{\et,?}(X_{\bar{\Q}_p},\Q_p(n))\big)\Big)\Rightarrow H^{i+j}_{\NN,?}(X,n,P).
    \end{equation}
    \item [(ii)] Given $P,Q\in 1+T\cdot\Q_p[T]$, one has for every $n,m\in\Z$ a natural map
    \begin{equation}
    \label{PQsyntomicpairingtrivial}
        R\Gamma_{\NN}(X,n,P)\otimes_{K_0}R\Gamma_{\NNc}(X,m,Q)\to R\Gamma_{\NNc}(X,n+m,P*Q)\,,
    \end{equation}
    inducing cup products
    \begin{equation}
    \label{syntomiccupprodtrivial}
    H^i_{\NN}(X,n,P)\times H^j_{\NNc}(X,m,Q)\xrightarrow{\cup}H^{i+j}_{\NNc}(X,n+m,P*Q)\,.
    \end{equation}
    The spectral sequence \eqref{etaletosyntomicspectralsequencetrivial} is compatible with this cup product and the cup product of Proposition \ref{cupprodst}. This compatibility is made completely explicit in \cite[Section 2.4]{BLZ2016} and includes sign changes in the formulas of Table \eqref{tableremark} according to the degrees of the cochains involved.
    \item [(iii)] If $P\in 1+T\cdot\Q_p[T]$ is such that $P(\zeta)\neq 0\neq P(\zeta/p)$ for all $\zeta\in\mu_d$ and $X$ has dimension $d_X$, then there is a trace map
    \begin{equation}
    \label{syntomictracemaptrivial}
        \Tr_{X,\NN, P}: H^{2d_X+1}_{\NNc}(X,d_X+1,P)\twoheadrightarrow H^1_{\st,P}(F,\Q_p^{nr}(1))\cong F
    \end{equation}
    whose kernel is the second step $\Fil^2$ of the $3$-step filtration induced on the cohomology group $H^{2d_X+1}_{\NNc}(X,d_X+1,P)$ by the spectral sequence \eqref{etaletosyntomicspectralsequencetrivial}.

    The isomorphism $H^1_{\st,P}(F,\Q_p^{nr}(1))\cong F$ is explicitly given by sending a class $[(w,z,y)]\in H^1_{\st,P}(F,\Q_p^{nr}(1))$ to $y-P(\sigma/p)^{-1}w$.
    \item [(iv)] Assume that $\iota:Z\to X$ is a smooth proper morphism of smooth $F$-varieties of relative dimension $j$. Then, for every $i\in\Z_{\geq 0}$, $P,Q\in 1+T\cdot \Q_p[T]$, $n\in\Z$, there are pushforward maps
    \begin{equation}
    \label{syntomicGysin}
        \iota_*: H^i_{\NN}(Z,n,P)\to H^{i+2j}_{\NN}(X,n+j,P)\,.
    \end{equation}
    satisfying the following projection formula:
    \begin{equation}
    \label{syntomicprojformulatrivial}
        \iota_*\alpha\cup\beta = \iota_*(\alpha\cup\iota^*\beta)
    \end{equation}
    for every $\alpha\in H^i_{\NN}(Z,n,P)$ and every $\beta\in H^j_{\NNc}(X,m,Q)$, where the cup product is the syntomic cup product defined in \eqref{syntomiccupprodtrivial} (on $Z$ and $X$ respectively). Moreover, the pushforward maps are compatible with the pushforward in \'{e}tale cohomology (cf. the discussion at the beginning of \cite[Section 6.4]{LZ2024} for this last fact, which is not directly stated in \cite{NN2016}).
\end{itemize}
\end{rmk}

\begin{rmk}
One can also define an $F$-linearized version of Nekov\'{a}\v{r}--Nizio\l{} syntomic and finite polynomial cohomology, replacing the semilinear Frobenius $\varphi$ with its $d$-th power (recall $d=[F_0:\Q_p]$) and extending scalars to $F$ everywhere in the construction. This is what actually happens in \cite{BLZ2016}, so we refer to this work for more details. We will denote this linearized theory with $\widetilde{R\Gamma}_{\NN}(X,n,P)$ (resp. $\widetilde{R\Gamma}_{\NNc}(X,n,P)$ for the compactly supported version). Now one can consider polynomials $P\in 1+T\cdot F[T]$. This version of syntomic and finite polynomial cohomology is less refined that the semilinear one, but more treatable when it comes to computations. Note that the two versions agree when $F=\Q_p$.

\medskip
The discussion of Remark \ref{NNsyntomicfeaturesrmk} has an analog for the $F$-linearized syntomic/finite polynomial cohomology, which is essentially obtained replacing $R\Gamma_{\NN,?}(-)$ with $\widetilde{R\Gamma}_{\NN,?}(-)$ and $H^i_{st,P}$ with $\Tilde{H}^i_{\st,P}$. Let us just remark that in this case the trace map of (iii) above is defined for polynomials such that $P(1)\neq 0\neq P(1/q)$ (for $q=p^d$) and that the isomorphism $\Tilde{H}^1_{\st,P}(F,\Q_p^{nr}(1))\cong F$ is now explicitly given by sending a class $[(w,z,y)]\in \Tilde{H}^1_{\st,P}(F,\Q_p^{nr}(1))$ to $y-P(1/q)^{-1}w$.
\end{rmk}

\subsection{Liebermann's trick for Nekov\'{a}\v{r}--Nizio\l{} syntomic cohomology}
\label{NNsyntomicwithcoeff}
Thanks to the so-called \emph{Liebermann's trick} (cf. for instance \cite[Paragraph 4.1.3]{BSV2020a} for the case of modular curves with smooth $\Z_p$-model and \cite[Sections 5.3 and 6.5]{LZ2024} for the general case of PEL type Shimura varieties in which no smooth $\Z_p$-model is required), we will always be able to assume that the relevant cohomology groups for (products of) modular curves appearing in the sequel are direct summands of cohomology groups of (products of) suitable Kuga-Sato varieties with trivial coefficients. Hence it is possible to define Nekov\'{a}\v{r}--Nizio\l{} syntomic $P$-cohomology over modular curves (resp. triple product of modular curves) with coefficients in an algebraic representation $V$ of $\GL_2$ (resp. $\GL_2\times\GL_2\times\GL_2$). 

\medskip
We denote these cohomology groups by $H^i_{\NN}(W,\mathcal{V},n,P)$ (resp. $H^i_{\NNc}(W,\VV,n,P)$ for the compactly supported version), where $W$ is a modular curve or a triple product of modular curves defined over $\Q_p$ (or a finite extension $F$ of $\Q_p$). 

\begin{rmk}
\label{NNsynwithcoeffrmk}
Let $Y:=Y_1(Mp^t)_F$ and denote $d:Y\hookrightarrow Y^3$ the diagonal embedding. 
As observed in \cite[Section 6.5]{LZ2024}, the properties (i)-(iv) described in Remark \ref{NNsyntomicfeaturesrmk} have a counterpart in this setting, given as follows. For (i)' and (ii)' we write $S=Y$ or $S=Y^3$ and $V$ is an algebraic representation of $\GL_2$ or $\GL_2\times\GL_2\times\GL_2$ accordingly.
\begin{itemize}
    \item [(i)'] For $P\in 1+T\cdot\Q_p[T]$, $n\in\Z$ and $?\in\{c,\emptyset\}$, there is a spectral sequence:
    \begin{equation}
    \label{etaletosyntomicspectralsequencecoeff}  E_2^{i,j}:=H^i_{\st,P}\Big(F,\D_\pst\big(H^j_{\et,?}(S_{\bar{\Q}_p},\VV(n))\big)\Big)\Rightarrow H^{i+j}_{\NN,?}(S,\VV, n,P).
    \end{equation}
    \item [(ii)'] Given $P,Q\in 1+T\cdot\Q_p[T]$, one has for every $n,m\in\Z$
    \begin{equation}
    \label{syntomiccupprodcoeff}
    H^i_{\NN}(S,\VV,n,P)\times H^j_{\NNc}(S,\WW,m,Q)\xrightarrow{\cup}H^{i+j}_{\NNc}(S,\VV\otimes\WW, n+m,P*Q)\,.
    \end{equation}
    The spectral sequences \eqref{etaletosyntomicspectralsequencecoeff} are compatible with this cup product and the cup product of Proposition \ref{cupprodst} (with the same \emph{caveat} concerning sign changes as in \eqref{syntomiccupprodtrivial}).
    \item [(iv)'] Fix an algebraic representation $V$ of $\GL_2\times\GL_2\times\GL_2$ and let $W=V|_{\GL_2}$. Then for every $P\in 1+ T\cdot \Q_p[T]$, $n\in\Z$, there are pushforward maps (again compatible with pushforward in \'{e}tale cohomology)
    \begin{equation}
    \label{syntomicGysincoeff}
        d_*=(d^{\WW})_*: H^i_{\NN}(Y,\WW,n,P)\to H^{i+4}_{\NN}(Y^3,\VV,n+2,P)\,.
    \end{equation}
    and pullback maps
    \begin{equation}
    \label{syntomicpullbackcoeff}
        d^*=(d^{\WW})^*: H^i_{\NNc}(Y^3,\VV,n,P)\to H^i_{\NNc}(Y,\WW,n,P)\,.
    \end{equation}
    
    satisfying the following projection formula:
    \begin{equation}
    \label{syntomicprojformulacoeff}
        d_*\alpha\cup\beta = d_*(\alpha\cup d^*\beta)
    \end{equation}
    for every $\alpha\in H^i_{\NN}(Y,\WW,n,P)$ and every $\beta\in H^j_{\NNc}(Y^3,\VV,m,Q)$, where the cup product is the syntomic cup product defined in \eqref{syntomiccupprodcoeff} (on $Y$ and $Y^3$ respectively).
\end{itemize}
\end{rmk}

\begin{rmk}
The discussion of this section applies \emph{mutatis mutandis} to the $F$-linearized version of Nekov\'{a}\v{r}--Nizio\l{} syntomic/finite polynomial cohomology.
\end{rmk}

\subsection{The syntomic Abel--Jacobi map}
\label{abeljacobisyn}
The following diagram summarizes the étale and the syntomic versions of the Abel--Jacobi maps over $F$. Note again that we use the same symbol for the \emph{syntomic coefficients} and their étale counterparts. Here $(f,g,h)$ is again a triple of modular forms satisfying assumption \ref{balselfdual}.

\begin{center}
    \begin{tikzcd}
    & H^0_{\NN}(Y_{t,F},\Hs_\mathbf{r},r)\cong H^0_\et(Y_{t,F},\Hs_\mathbf{r}(r))\arrow[d, "AJ_{\syn,F}" ']\arrow[dr, bend left= 10, "AJ_{\et,F}"]\\
    & H^1_\st\big(F,\D_\pst(H^3_\et(Y^3_{t,\bar{\Q}_p},\Hs_{[\mathbf{r}]}(r+2))_L)\big)\arrow[d,"s_\mathbf{r}"', "\cong"]\arrow[r, "\exp_\st", "\cong"'] & H^1_g\big(F, H^3_\et(Y^3_{t,\bar{\Q}_p},\Hs_{[\mathbf{r}]}(r+2))_L\big)\arrow[d, "s_\mathbf{r}", "\cong"'] \\
    & H^1_\st\big(F,\D_\pst(H^3_\et(Y^3_{t,\bar{\Q}_p},\Ls_{[\mathbf{r}]}(2-r))_L)\big)\arrow[d, twoheadrightarrow, "pr^{fgh}_{\st}"']\arrow[r, "\exp_\st", "\cong"'] & H^1_g\big(F, H^3_\et(Y^3_{t,\bar{\Q}_p},\Ls_{[\mathbf{r}]}(2-r))_L\big)\arrow[d, twoheadrightarrow, "pr^{fgh}_{\et}"] \\
    & H^1_\st(F,\D_\pst(V(f,g,h)))\arrow[r, "\exp_\st", "\cong"'] & H^1_g(F, V(f,g,h))
    \end{tikzcd}
\end{center}

\begin{rmk}
\label{AJsyntomicrmk}
Some remarks are in order.
\begin{itemize}
    \item [(i)] The isomorphism $H^0_{\NN}(Y_{t,F},\Hs_\mathbf{r},r)\cong H^0_\et(Y_{t,F},\Hs_\mathbf{r}(r))$ can be realized as the composition of the following canonical isomorphisms:
    \begin{align*}
        H^0_{\NN}(Y_{t,F},\Hs_\mathbf{r},r) & \cong H^0_\st\big(F,\D_\pst(H^0_\et(Y_{t,\bar{\Q}_p},\Hs_\mathbf{r}(r)))\big)\\
        &\cong  H^0\big(F,H^0_\et(Y_{t,\bar{\Q}_p}, \Hs_\mathbf{r}(r))\big)\\
        &\cong H^0_\et(Y_{t,F}, \Hs_\mathbf{r}(r))\,.
    \end{align*}
    The first isomorphism follows from the spectral sequence \eqref{etaletosyntomicspectralsequencecoeff}, the second isomorphism follows from Theorem \ref{phiNGaloistheorem} and the third isomorphism is afforded by the Hochschild--Serre spectral sequence.
    \item [(ii)] The construction of the syntomic Abel--Jacobi map $AJ_{\syn,F}$ follows the same ideas as for the étale counterpart (cf. the diagram above Remark \ref{AJetrmk} and the remark itself). As described in Remark \ref{NNsynwithcoeffrmk}, the diagonal embedding $d_t: Y_{t,F}\hookrightarrow Y_{t,F}^3$ gives rise to a pushforward map
    \[
    d_{t,*}: H^0_{\NN}(Y_{t,F},\Hs_\mathbf{r},r)\to H^4_{\NN}(Y_{t,F}^3,\Hs_{[\mathbf{r}]},r+2)\,.
    \]
    Set $D^j:=\D_\pst\big(H^j_\et(Y_{t,\bar{\Q}_p}^3,\Hs_{[\mathbf{r}]}(r+2)_L)\big)$. Then the spectral sequence
    \begin{equation}
    \label{spectralWt}
        E_2^{i,j}:=H^i_\st(F,D^j)\Rightarrow H^{i+j}_{\NN}(Y_{t,F}^3,\Hs_{[\mathbf{r}]},r+2)_L
    \end{equation}
    induces a surjective morphism $H^4_{\NN}(Y_{t,F}^3,\Hs_{[\mathbf{r}]},r+2)_L\twoheadrightarrow H^1_\st(F,D^3)$, whose kernel is the second step filtration $\Fil^2\big(H^4_{\NN}(Y_{t,F}^3,\Hs_{[\mathbf{r}]},r+2)_L\big)$. The syntomic Abel--Jacobi maps is the composition
    \[
    AJ_{\syn,F}: H^0_{\NN}(Y_{t,F},\Hs_\mathbf{r},r)\xrightarrow{d_{t,*}} H^4_{\NN}(Y_{t,F}^3,\Hs_{[\mathbf{r}]},r+2)\twoheadrightarrow H^1_\st(F,D^3)\,.
    \]
    \item [(iii)] The commutativity $AJ_{\syn,F}=\exp_{\st}\circ AJ_{\et,F}$ uses crucially the compatibility between pushforwards in syntomic and \'{e}tale cohomology (cf. Remark \ref{NNsyntomicfeaturesrmk}, (iv)).
\end{itemize}
\end{rmk}

\begin{cor}
The class $\kappa(f,g,h)\in H^1(\Q_p,V(f,g,h))$ defined in Section \ref{abeljacobi} always belongs to the subspace $H^1_g(\Q_p,V(f,g,h))$.
\begin{proof}
The above discussion and Theorem \ref{phiNGaloistheorem} show that the image of $\kappa(f,g,h)$ inside $H^1(F,V(f,g,h))$ belongs to the subspace $H^1_g(F,V(f,g,h))$ and it is $\Gal(F/\Q_p)$-invariant. We conclude recalling that restriction induces an isomorphism (cf. Remark \ref{BKremark})
\[
H^1_g(\Q_p,V(f,g,h))\xrightarrow[\cong]{\res_{F/\Q_p}}H^1_g(F,V(f,g,h))^{\Gal(F/\Q_p)}\,.
\]
\end{proof}
\end{cor}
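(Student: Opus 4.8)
The plan is to read off the conclusion from the commutative square relating the \'etale and syntomic Abel--Jacobi maps displayed just above remark~\ref{AJsyntomicrmk}. First I would fix the finite Galois extension $F/\Q_p$ of remark~\ref{phigammagh}, over which $V_{Mp^t}(g)$, $V_{Mp^t}(h)$ and $V_{Mp^t}(h')$ are crystalline and $V^*_{Mp^t}(f')$ (hence also $V_{Mp^t}(f')$) is semistable; since $V(f,g,h)=V_{Mp^t}(f')\otimes_L V_{Mp^t}(g)\otimes_L V_{Mp^t}(h')(-1-r)$, it is itself $F$-semistable. The key observation is that, after restricting the \emph{local} class $\kappa(f,g,h)\in H^1(\Q_p,V(f,g,h))$ along $G_F\hookrightarrow G_{\Q_p}$, one obtains exactly the image of the syntomic avatar $\DET_\mathbf{r}^\syn\in H^0_\syn(\XX_t,\Hs_\mathbf{r},r)$ of $\DET_\mathbf{r}^\et$ under the composite $\exp_\st\circ\, pr^{fgh}_\st\circ s_\mathbf{r}\circ AJ_{\syn,F}$. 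In particular $\res_{F/\Q_p}\kappa(f,g,h)$ lies in the image of the natural map $H^1_\st(F,\D_\pst(V(f,g,h)))\to H^1(F,V(f,g,h))$.

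Granting this, I would invoke theorem~\ref{phiNGaloistheorem}: since $V(f,g,h)$ is $F$-semistable, the image of that map is precisely $H^1_g(F,V(f,g,h))$, so $\res_{F/\Q_p}\kappa(f,g,h)\in H^1_g(F,V(f,g,h))$. Moreover, as $\kappa(f,g,h)$ already lives over $\Q_p$, its restriction to $G_F$ is automatically fixed by $\Gal(F/\Q_p)$. Finally, by remark~\ref{BKremark} restriction induces an isomorphism $H^1_g(\Q_p,V(f,g,h))\xrightarrow{\ \sim\ }H^1_g(F,V(f,g,h))^{\Gal(F/\Q_p)}$ (the Hochschild--Serre inflation--restriction sequence contributing nothing, all the representations being vector spaces over fields of characteristic zero), and this forces $\kappa(f,g,h)$ itself into $H^1_g(\Q_p,V(f,g,h))$.

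The real content of the argument is therefore not in the corollary itself but in the commutativity of the \'etale--syntomic comparison diagram with the nontrivial coefficients $\Hs_{[\mathbf{r}]}$: one needs the comparison between $p$-adic syntomic and \'etale cohomology of the semistable model $\W_t$, compatibly with the Gysin pushforwards along the diagonal ($\delta_{t,*}$ versus $d_{t,*}$) and with the Hochschild--Serre / spectral-sequence edge maps on each side, together with the routine compatibility of the global \'etale class localized at $p$ with the \'etale Abel--Jacobi map over $F$ under $\Q\hookrightarrow\Q_p\hookrightarrow F$. This is exactly where the ``Lieberman trick'' of remark~\ref{finalsyntomicrmk} enters, reducing everything to trivial coefficients on an auxiliary product of Kuga--Sato varieties, where the results of Nekov\'a\v{r}--Nizio\l{} (and their $P$-syntomic refinement) apply; once these inputs are in hand, the proof is formal.
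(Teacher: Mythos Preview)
Your proof is correct and follows exactly the approach of the paper: use the syntomic--\'etale comparison diagram to show that $\res_{F/\Q_p}\kappa(f,g,h)\in H^1_g(F,V(f,g,h))$, note Galois invariance, and conclude via the restriction isomorphism of remark~\ref{BKremark}. You have simply spelled out in more detail what the paper abbreviates as ``the above discussion,'' including the role of theorem~\ref{phiNGaloistheorem} and the Lieberman trick of remark~\ref{finalsyntomicrmk}.
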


\begin{rmk}
\label{usetildermk}
If we assume that the triple $(f,g,h)$ is $(F,1-T)$-convenient (cf. Definition \ref{expconvtriple}), then as observed in Lemma \ref{lemmast1} we have an identification $\Tilde{H}^1_{\st,1-T}(F,V(f,g,h))\cong H^1_{\st,1-T}(F,V(f,g,h))$. Moreover, it follows from Remark \ref{inclH0} that we have an inclusion $H^0_{\NN}(Y_{t,F},\Hs_{\mathbf{r}},r)\subseteq \Tilde{H}^0_{\NN}(Y_{t,F},\Hs_{\mathbf{r}},r)$. Hence, if we assume that $(f,g,h)$ is $(F,1-T)$-convenient, we can proceed with the $F$-linearized version of syntomic/finite polynomial cohomology (and a corresponding syntomic Abel--Jacobi map) for what concerns the proof of the explicit reciprocity law that we will give in Section \ref{explicitreciprocitychapt}. 
\end{rmk}

\subsection{Log-rigid syntomic and finite-polynomial cohomology}
\label{materialonsynfp}

We will refer to the work of Ertl and Yamada (cf. \cite{EY2021} and the preprints \cite{EY2024}, \cite{EY2025}) for a version of Hyodo--Kato cohomology for strictly semistable log-schemes over $k_F^0$ (where $k_F^0$ denotes the log-scheme $\Spec(k_F)$ with the log structure associated with the monoid homomorphism $\N\to k_F$, $1\mapsto 0$).
Building up on the aforementioned works, \cite{Yam2025} and \cite{HW2022}) develop versions of syntomic (and finite polynomial) cohomology with coefficients for strictly semistable log schemes over $\OO_F^\varpi$ (where $\OO_F^{\varpi}$ denotes the scheme $\Spec(\OO_F)$ with canonical log structure associated with the monoid homomorphismm $\N\to\OO_F$, $1\mapsto\varpi$). 

\medskip
We fix once and for all the choice of the uniformizer $\varpi\in\OO_F$ and we follow the discussion in \cite[Section 7]{LZ2024} to summarize the main properties of such cohomology theories.

\begin{defi}
A strictly semistable $\OO_F$-scheme with boundary is a pair $(X,D)$ where is an $\OO_F$-scheme flat of finite type and $D$ is a closed subscheme (also flat over $\OO_F$) such that:
\begin{enumerate}[(i)]
    \item the union of $D$ and of the special fiber $X_0$ of $X$ is a strict normal crossing divisor (see \cite[Tag 0CBN]{stacks-project} for the definition);
    \item each point of $X$ has a Zariski-open neighbourhood which is smooth over
    \[
    \Spec\big(\OO_F[t_1,\dots,t_m,s_1,\dots,s_n]/(t_1\cdots t_m-\varpi)\big)
    \]
    for some $m,n\in\Z_{\geq 0}$ and with $D$ corresponding to the locus $s_1\cdots s_n=0$.
\end{enumerate}
\end{defi}

We will write $U:X-D$ in what follows.

\medskip
By endowing a strictly semistable $\OO_F$-scheme with boundary $(X,D)$ with the log structure associated with the divisor $D\cup X_0$, one obtains a strictly semistable log-scheme with boundary over $\OO_F^\varpi$ (cf. \cite[Definition 4.4]{EY2021}).

In \cite[Section 2]{EY2021} the authors construct a Hyodo--Kato map in the derived category of $F_0$-vector spaces (depending on the choice of the uniformizer $\varpi\in\OO_F$ and of a branch of the $p$-adic logarithm $\log:F^\times\to F$)
\[
\Psi_\varpi:=\Psi_{\varpi,\log,F_0}: R\Gamma^{\HK}_{\rig}(X_0\langle D_0\rangle)\to R\Gamma_{\lrig}(X_0\langle D_0\rangle/\OO_F^\varpi)\,.
\]
The notations is as follows:
\begin{enumerate}[(i)]
    \item $R\Gamma^{\HK}_{\rig}(X_0\langle D_0\rangle)$ is a complex of $F_0$-vector spaces with $F_0$-semilinear Frobenius $\varphi$ and $F_0$-linear monodromy $N$ such that $N\varphi=p\varphi N$, called rigid Hyodo--Kato cohomology;
    \item $R\Gamma_{\lrig}(X_0\langle D_0\rangle/\OO_F^\varpi)$ is a complex of $F$-vector spaces, quasi-isomorphic to the de Rham cohomology of the dagger space $\XX=]X_0[_{X_F^{an}}^\dag$ (the tube of $X_0$ inside the analytification $X_F^{an}$ of $X_F$) with log-poles along $D_F$, called log-rigid cohomology.
\end{enumerate}

\begin{rmk}
\label{HKisoafterbasechange}
The Hyodo--Kato map $\Psi_\varpi$ becomes a quasi-isomorphism after extending scalars to $F$ on the source (cf. the discussion in \cite[p. 278]{EY2021}).
\end{rmk}

\begin{rmk}
\label{logrigpro}
Note that, if $X_F$ is proper over $F$, then $R\Gamma_{\lrig}(X_0\langle D_0\rangle/\OO_F^\varpi)$ is quasi-isomorphic to $R\Gamma_{\dR}(U_F)=R\Gamma(X_F,\Omega^\bullet_{X_F}\langle D_F\rangle)$, i.e., the algebraic de Rham cohomology of $X_F$ with log-poles along $D_F$.
\end{rmk}

Analogously, \cite{EY2024} in the compactly supported case constructs a map
\[
\Psi_\varpi:=\Psi_{\varpi,\log,F_0,c}: R\Gamma^{\HK}_{\rig}(X_0\langle -D_0\rangle)\to R\Gamma_{\lrig}(X_0\langle -D_0\rangle/\OO_F^\varpi)\,.
\]
such that $\Psi_{\varpi,\log,F_0,c}\otimes_{F_0}F$ is a quasi-isomorphism.

\medskip
In \cite[Section 4.2]{EY2021}, the authors attach to a pair $(X,D)$ and to an integer $r\geq 0$ a complex $R\Gamma_{\lrigs}(X\langle D\rangle,r)$ computing log-rigid syntomic cohomology, defined as the homotopy limit over a suitable diagram. If one assumes that $X$ is proper over $\OO_F$, then such complex is quasi isomorphic to the homotopy limit (cf. \cite[Section 1.2]{EY2021} for the conventions on the notation):
\begin{equation}
\label{rigidsyntomicdefinition}
\begin{aligned}
    R\Gamma_{\lrigs}&(X\langle D\rangle,r)\\
    & :=\left[ \hspace{-1.8 cm}
\begin{tikzcd}[column sep =  huge]
    & R\Gamma_{\rig}^{\HK}(X_0\langle D_0\rangle)\arrow[r, "\text{$(1-p^{-r}\varphi,\Psi_\varpi)$}"]\arrow[d, "N"] & R\Gamma^{\HK}_{\rig}(X_0\langle D_0\rangle)\oplus R\Gamma_{\dR}(U_F)/\Fil^r\arrow[d, "\text{$(N,0)$}"]\\
    & R\Gamma^{\HK}_{\rig}(X_0\langle D_0\rangle)\arrow[r, "1-p^{1-r}\varphi"] & R\Gamma^{\HK}_{\rig}(X_0\langle D_0\rangle)
\end{tikzcd}\right]\,.
\end{aligned}
\end{equation}

Here we are still writing $\Psi_\varpi$ to denote the composition of the Hyodo--Kato map $\Psi_\varpi$ with the quasi-isomorphism between log-rigid cohomology and de Rham cohomology afforded by Remark \ref{logrigpro} when $X$ is proper.

\begin{teo}
\label{logrigidvscristalline}
Let $(X,D)$ be a strictly semistable $\OO_F$-scheme with boundary such that $X$ is proper and let $U=X-D$. Then for all $r\in\Z_{\geq 0}$ there are canonical quasi-isomorphisms
\[
R\Gamma_{\lrigs}(X\langle D\rangle,r)\cong R\Gamma_{\NN}(U_F,r).
\]
\begin{proof}
This is \cite[Theorem 5.1]{EY2021}. 
\end{proof}
\end{teo}

Assume from now on in this section that $X$ is proper over $\OO_F$. 

\medskip
More generally, one can define log-rigid finite polynomial cohomology, replacing $1-p^{-r}\varphi$ with $P(p^{-r}\varphi)$ for $P\in 1+T\cdot\Q_p(T)$ in the diagram defining log-rigid syntomic cohomology and obtain isomorphisms
\begin{equation}
\label{rigidtocrissyn}
R\Gamma_{\lrigs}(X\langle D\rangle,r,P)\cong R\Gamma_{\NN}(U_F,r,P).   
\end{equation}

\begin{rmk}
\label{logrigidcristallinecompatibility}
Following \cite{EY2024} (as summarized in \cite[Section 7.3]{LZ2024}), one can also define a compactly supported version of log-rigid syntomic (and finite polynomial) cohomology and prove the existence of canonical quasi-isomorphisms for $r\in\Z_{\geq 0}$
\begin{equation}
\label{rigidtocrissync}
R\Gamma_{\lrigs}(X\langle -D\rangle,r,P)\cong R\Gamma_{\NNc}(U_F,r,P).    
\end{equation}
One can then obtain cup products
\[
R\Gamma_{\lrigs}(X\langle D\rangle,r,P)\times R\Gamma_{\lrigs}(X\langle -D\rangle,s,Q)\to R\Gamma_{\lrigs}(X\langle -D\rangle,r+s,P*Q)
\]
for $r,s\in\Z_{\geq 0}$ which are compatible with the cup product \eqref{syntomiccupprodtrivial} under the isomorphisms \eqref{rigidtocrissyn} and \eqref{rigidtocrissync}.
\end{rmk}

\begin{rmk}
\label{logrigidtilded}
Even for log-rigid syntomic (or finite polynomial) cohomology, one can obtain an $F$-linearized theory. When $X$ is proper over $\OO_F$, one just has to replace $\varphi$ with $\Phi=\varphi^d$, $p$ with $q=p^d$ and $R\Gamma_{\rig}^{\HK}(X_0\langle D_0\rangle)$ with $R\Gamma_{\rig}^{\HK}(X_0\langle D_0\rangle)\otimes_{F_0}F$ in the above diagram \eqref{rigidsyntomicdefinition}. We will denote it by $\widetilde{R\Gamma}_{\lrigs}(X\langle \pm D\rangle,r,P)$. Since Theorem \ref{logrigidvscristalline} (and more generally the quasi-isomorphisms \eqref{rigidtocrissyn} and \eqref{rigidtocrissync}) are a formal consequence of quasi-isomorphisms between log-rigid Hyodo--Kato cohomology and log-crystalline Hyodo--Kato cohomology, analog quasi-isomorphisms are valid for the $F$-linearized variants.
\end{rmk}

\subsection{Syntomic coefficients}
\label{syntomiccoefficientssection}
Log-rigid Hyodo--Kato and syntomic (or finite polynomial) cohomologies for strictly semistable schemes seem to lack a satisfying theory of coefficients, as far as we can deduce from the existing literature. In this section we try to underline some of the issues related with this problem and we outline some results that a reasonable theory of coefficients should afford.

\medskip
In \cite[Section 9]{Yam2025}, the author defines a category of syntomic coefficients as triples $(\mathscr{E},\Phi,\Fil)$, where $(\mathscr{E},\Phi)$ is a log overconvergent $F$-isocrystal (cf. \cite[Section 2.4]{Yam2025}) and $\Fil$ is a filtration on the de Rham realization $\mathscr{E}_{\dR}$ that satisfies Griffiths' transversality.

\medskip
Inside such category of coefficients, Yamada singles out a subcategory given by the triples $(\mathscr{E},\Phi,\Fil)$ in which $\mathscr{E}$ is \emph{unipotent} (i.e., an iterated extension of the trivial isocrystal). This condition is necessary to obtain a proof of the fact that the version of Hyodo--Kato map which allows non-trivial coefficients $(\mathscr{E},\Phi)$ is still a quasi-isomorphism (cf. \cite[Proposition 8.8]{Yam2025} and \cite[Corollary 3.8]{EY2024}).

\medskip
In \cite{HW2022}, the authors develop the formalism of finite polynomial cohomology with coefficients, employing the same category of coefficients as Yamada's (i.e., restricting to unipotent ones). On the other hand (cf. \cite[Section 6]{HW2022}), if one was able to prove that the Hyodo--Kato map is an isomorphism also for an isocrystal $(\mathscr{E},\Phi,\Fil)$ which is not necessarily unipotent, then the definition of syntomic (and finite polynomial) cohomology via a suitable mapping fiber and the subsequent formalism would work in the same way as in the unipotent case.

\begin{ass}
\label{conditional}
Till the end of Section \ref{endofproof}, we will \textbf{assume} that the results recalled in Theorem \ref{logrigidvscristalline}, Remark \ref{logrigidcristallinecompatibility} (above) and Proposition \ref{extrestrprop} (below) are still valid when $X$ is a suitable strictly semistable model of a modular curve and when the syntomic coefficients come from automorphic coefficients $\Hs_r$ for $r\geq 0$.
\end{ass}

As already mentioned in the introduction and explained in more detail in Section \ref{thegeneralcasesubsection} below, the recent preprint \cite{ABSV2025} develops a formalism that allows to obtain an unconditional proof of Theorem \ref{erlintroteo} (cf. also the more general statement of Theorem \ref{explicirreclawconj} below) for all balanced triples of weights $(k,l,m)$. We believe that it is still interesting to ask whether (and how) the results postulated in Assumption \ref{conditional} can be proven, possibly without having to recur to a Liebermann trick for Ertl--Yamada cohomology, which would require the existence of semistable integral models for the relevant compactified Kuga--Sato varieties (cf. the discussion in Section \ref{introcomparison} above).

\subsection{Rigid syntomic and finite polynomial cohomology for smooth schemes}
\label{rigidsyntomicsmoothsection}
For smooth $\OO_F$-schemes, one can rely on Besser's simpler version of rigid syntomic and finite polynomial cohomology (\cite{Bes2000}), which also admits a nice theory of syntomic coefficients (overconvergent filtered $F$-isocrystals, cf. \cite[Definition 8.1.1]{LZ2024}). In the rest of the section, we omit the coefficients to make the notation lighter.

\medskip
In this section we let $Z$ denote a smooth $\OO_F$-scheme, whose generic fiber $Z_F$ is proper over $F$. As above, we denote by $\ZZ$ the dagger space tube of $Z_0$ in the rigid analytification $Z_F^{an}$ of $Z_F$. We also let $D$ be a divisor on $Z$ intersecting transversely with the special fibre, so that $(Z,D)$ fits again in the definition of strictly semistable $\OO_F$-scheme with boundary.

Following \cite[Definition 8.1.4]{LZ2024} (resp. \cite[Definition 8.1.9]{LZ2024}), one can define for $r\in\Z$ and $P\in 1+T\cdot\Q_p(T)$ the rigid finite polynomial cohomology complex $R\Gamma_{\rigs}(Z\langle D\rangle,r,P)$ (resp. its compactly supported version $R\Gamma_{\rigs,c}(Z\langle-D\rangle,r,P)$).

Rigid finite polynomial cohomology admits a cup product (cf. \cite[p. 37]{Bes2012})
\[
R\Gamma_{\rigs}(Z\langle D\rangle,r,P)\times R\Gamma_{\rigs,c}(Z\langle -D\rangle,s,Q)\to R\Gamma_{\rigs,c}(Z\langle -D\rangle,r+s,P*Q).
\]

In what follows, we will make use of the following crucial result.

\begin{prop}
\label{extrestrprop}
Let $(X,D)$ be a strictly semistable $\OO_F$-scheme with boundary with $X$ proper and let $Z\subset X$ be a smooth open subscheme such that $Z_F$ is proper over $F$. Then for $r,s\in\Z_{\geq 0}$ and $P,Q\in 1+T\cdot\Q_p(T)$ there exist:
\begin{enumerate}[(i)]
    \item a restriction map $\res_Z\colon R\Gamma_{\lrigs}(X\langle D\rangle,r,P)\to R\Gamma_{\rigs}(Z\langle D\cap Z\rangle,r,P);$
    \item an extension-by-0 map $\Ext_0\colon R\Gamma_{\rigs}(Z\langle-D\cap Z\rangle,s,Q)\to R\Gamma_{\lrigs}(X\langle-D\rangle,s,Q)$.
\end{enumerate}
which are the transposes of each other under the pairings in log-rigid finite polynomial cohomology for $X$ and rigid finite polynomial cohomology for $Z$.
\begin{proof}
This follows combining Lemma 8.1.7, Proposition 8.1.11 and Corollary 8.1.15 of \cite{LZ2024}.
\end{proof}
\end{prop}

\begin{rmk}
\label{descriptionFlinearrigid}
Once more, one can define an $F$-linearized (i.e., tilded, according to our notation) version of the theory and of Proposition \ref{extrestrprop}. The definition simplifies to the mapping fibre:
\begin{equation}
\widetilde{R\Gamma}_{\rigs}(Z\langle D\rangle,r,P)=\big[\Fil^r R\Gamma_{\dR}(Z_F\langle D_F\rangle)\xrightarrow{P(q^{-r}\Phi)\circ\mathrm{sp}} R\Gamma_{\rig}(Z_0\langle D_0\rangle)_F\big].    
\end{equation}
Here $R\Gamma_{\rig}(Z_0\langle D_0\rangle)$ denotes the rigid cohomology of the special fibre, which can be obtained as
\[
R\Gamma_{\rig}(Z_0\langle D_0\rangle):=R\Gamma_{\dR}(\ZZ\langle\DD\rangle)
\]
and 
\[
\mathrm{sp}\colon R\Gamma_{\dR}(Z_F\langle D_F\rangle)\to R\Gamma_{\rig}(Z_0\langle D_0\rangle)
\]
is the so-called \emph{specialization map} (cf. \cite[p. 34]{Bes2012}).

One has short exact sequences
\begin{equation}
\label{sesrigidsyntomic}
\begin{tikzcd}[column sep=small, row sep=small]
&0\arrow[r]&\frac{H^{i-1}_{\rig}(Z_0\langle D_0\rangle)}{P(q^{-r}\Phi)\circ\mathrm{sp}(\Fil^r H^{i-1}_{\dR}(Z_F\langle D_F\rangle))}\arrow[r, "\mathbf{i}_P"] &\Tilde{H}^i_{\rigs}(Z,r,P)\arrow[dl, controls={+(-1,-1) and +(1,1)}]\\
&&\Fil^r H^{i}_{\dR}(Z_F\langle D_F\rangle)^{P(q^{-r}\Phi)\circ\mathrm{sp}=0}\arrow[r] &0.   
\end{tikzcd}
\end{equation}
For the compactly supported variant, $\widetilde{R\Gamma}_{\rigs,c}(Z\langle -D\rangle,r,P)$ one can again give a definition as mapping fibre which gives rise to analogous short exact sequences (cf. \cite[pp. 50-51]{Bes2012}). In particular, one has a surjection 
\begin{equation}
\label{ppsurjection}
\mathbf{p}_P\colon \Tilde{H}^i_{\rigs,c}(Z\langle -D\rangle,r,P)\twoheadrightarrow \Fil^r H^{i}_{\dR,c}(Z_F\langle-D_F\rangle)\cap H^{i}_{\rig,c}(Z_0\langle-D_0\rangle)^{P(\Phi)=0}.
\end{equation}
To be more precise, the above intersection is between $\Fil^r H^{i}_{\dR,c}(Z_F\langle-D_F\rangle)$ and the image of $H^{i}_{\rig,c}(Z_0\langle-D_0\rangle)^{P(\Phi)=0}$ under the so-called \emph{cospecialization map}.

\medskip
In the sequel we will use the following projection formula (cf. \cite[Proposition 4.6]{Bes2012}). For $x\in H^{i-1}_{\rig}(Z_0\langle D_0\rangle)$ and $y\in \Tilde{H}^i_{\rigs,c}(Z\langle -D\rangle,r,P)$ it holds
\begin{equation}
\label{crucialprojectionformula}
\mathbf{i}_Q(x)\cup_{\rigs} y=\mathbf{i}_Q(x\cup_{\rig}(\mathbf{p}_P(y)))    
\end{equation}
where the notation $\cup_{\rigs}$ (resp. $\cup_{\rig}$) refers to the cup product in rigid finite polynomial (resp. rigid) cohomology.
\end{rmk}

\subsection{De Rham cohomology and overconvergent modular forms}
\label{overconvergentsection}
In this section we recall some facts concerning the geometry of $X_1(Mp^t)$ (for $t\geq 1$ and $p\nmid M$) as a rigid analytic variety over $\Q_p$ (or a suitable extension of $\Q_p$). One can find a more detailed treatment in \cite{Col1997}, \cite[Section 4.4]{BE2010} and \cite[Section 4.1]{DR2017}. Recall that $L$ denotes a large enough finite extension of $\Q_p$ as usual.

\medskip
Set $K_t:=\Q_p(\zeta_{p^t})$ (where $\zeta_{p^t}$ denotes a primitive $p^t$-th root of unity). Thanks to \cite{KM1985}, we know that (if $M>4$) $X_1(Mp^t)_{K_t}$ admits a proper flat regular model $X_t$ over $\OO_{K_t}=\Z_p[\zeta_{p^t}]$, which represents the moduli problem $([\Gamma_1(M)],[\mathrm{bal}.\Gamma_1(p^t)^{\mathrm{can}}])$, whose special fiber is described in \cite[Theorem 13.11.4]{KM1985}. 
It is the union of $t+1$ irreducible components (which are smooth proper curves over $\F_p$), intersecting at supersingular points. In particular, the aforementioned theorem shows that there are exactly two irreducible components of $X_{t,0}$ (the special fiber of $X_t$) isomorphic to the Igusa curve usually denoted $\Ig(Mp^t)$ as curves over $\F_p$ (this Igusa curve represents over $\F_p$ the moduli problem denoted $([\Gamma_1(M)],[\Ig(p^r)])$ in \cite[Section 12.3]{KM1985}). Usually, these two irreducible components are denoted $\Ig_\infty(p^t)$ and $\Ig_0(p^t)$, to stress the fact that $\Ig_\infty(p^t)$ contains the cusp $\infty$.

\medskip
Up to extending scalars to a finite extension $F\subseteq L$ of $\Q_p(\zeta_{p^t})$, one can obtain a strictly semistable model $\Tilde{X}_t$ of $X_{t,F}$ over $\Spec(\OO_F)$, together with a birational map $\Tilde{X}_t\to X_t$. Since such a semistable model can be obtained by successive blowups at supersingular points in the special fiber, it follows that this map identifies two irreducible components of $\Tilde{X}_{t,0}$ (the special fiber of $\Tilde{X}_t$) with $\Ig_\infty(p^t)_{k_F}$ and $\Ig_0(p^t)_{k_F}$ (where $k_F$ is the residue field of $F$). Since the cusps are not involved in the successive blowups, we still have a cuspidal divisor $\Tilde{D}\subset \Tilde{X}_t$, so that $(\Tilde{X}_t,\Tilde{D})$ is a strictly semistable $\OO_F$-scheme with boundary.

\medskip
We can look at $X_{t,F}$ as a rigid analytic space $X_{t,F}^{\an}$ over $F$. We define $\W_\infty(p^t)$ and (resp.) $\W_0(p^t)$ of $X_{t,F}^{\an}$ as the preimages (i.e., the tubes) of $\Ig_\infty(p^t)_{k_F}$ and (resp.) $\Ig_0(p^t)_{k_F}$ under the reduction map. The spaces $\W_\infty(p^t)$ and $\W_0(p^t)$ are examples of what Coleman calls \emph{wide open subspaces}, i.e., they are isomorphic to the complement of a finite disjoint union of closed disks in a smooth curve. The preimage of the smooth locus of $\Ig_\infty(p^t)$ (resp. $\Ig_0(p^t)$) is an affinoid subset of $X_{t,F}^{\an}$ which will be denoted by $\A_\infty(p^t)$ (resp. $\A_0(p^t)$).

\medskip
Since $X_{t,F}$ is proper over $F$, rigid de Rham cohomology agrees with algebraic de Rham cohomology. In particular we can consider for every $\nu\geq 2$ and for $?\in\{\infty,0\}$ restriction maps
\[
\res_?: H^1_\dR(X_{t,F},\Hs_{\nu-2})_L\to H^1_\dR(\W_?(p^t),\Hs_{\nu-2})_L\, .
\]
According to \cite[Lemme 4.4.1]{BE2010}, one can equip the groups $H^1_\dR(\W_?(p^t),\Hs_{\nu-2})$ with an action of Hecke and diamond operators away from $Mp$ in such a way that the restriction maps are Hecke equivariant.

\medskip
Following the discussion in Section \ref{materialonsynfp}, we can consider (up to fixing once and for all a uniformizer $\varpi\in\OO_F$ and a branch of the $p$-adic logarithm) the rigid Hyodo--Kato cohomology $H^{1,\HK}_{\rig}(\Tilde{X}_{t,0}\langle \pm \Tilde{D}_0\rangle)$. 

\medskip
Recall that the $F_0$-vector space $H^{1,\HK}_{\rig}(\Tilde{X}_{t,0}\langle \pm \Tilde{D}_0\rangle)$ is endowed with the structure of $(\varphi,N)$-module. We let $\Phi:=\varphi^d$ (where $d=[F_0:\Q_p]$) so that, extending scalars from $F_0$ to $F$, we can endow (via the isomorphism induced by $\Psi_{\varpi}\otimes_{F_0}F$, cf. Remark \ref{HKisoafterbasechange}) $H^1_\dR(X_{t,F})$ with an $F$-linear Frobenius $\Phi$ and monodromy operator $N$ such that $N\circ\Phi=p^d\cdot \Phi\circ N$.

Using the crystalline version of Hyodo--Kato cohomology for semistable schemes over $\OO_F$ (namely log-crystalline cohomology), one can obtain a structure of $(\Phi,N)$-module on $H^1_\dR(X_{t,F},\Hs_{\nu-2})$ also when $\nu>2$. Thanks to the comparison isomorphisms between log-crystalline and rigid Hyodo--Kato cohomology (see \cite[Corollary 5.4 and Proposition 5.5]{EY2021}), these structures coincide when $\nu=2$ (and they should coincide more generally, cf. Assumption \ref{conditional}).

\medskip
One can endow the $F$-vector spaces $H^1_\dR(\W_?(p^t),\Hs_{\nu-2})$ with an $F$-linear Frobenius $\Phi_?$ such that the restriction map $\res_?$ is Frobenius equivariant for $?\in\{\infty,0\}$, as explained in \cite[pp. 647-648]{DR2017}. Since the operator $w_{p^t}$ interchanges $\W_\infty(p^t)$ and $\W_0(p^t)$ (cf. \cite[Lemma 4.4.3]{BE2010}), one has that
$\Phi_0=w_{p^t}^{-1}\circ\Phi_\infty\circ w_{p^t}$. 

\medskip
One of the main results of \cite{Col1997} (namely Theorem 2.1) can be stated as follows. The restriction maps induce an isomorphism
\begin{equation}
\label{decompinfzero}
    H^1_\dR(X_{t,F}, \Hs_{\nu-2})_L^{\mathrm{prim}}\xrightarrow{\cong}H^1_\dR(\W_\infty(p^t),\Hs_{\nu-2})_L^*\oplus H^1_\dR(\W_0(p^t),\Hs_{\nu-2})_L^*
\end{equation}
Here $H^1_\dR(X_{t,F}, \Hs_{\nu-2})_L^{\mathrm{prim}}$ denotes the $p$-primitive subspace, i.e., the subspace spanned by the common eigenspaces for the action of Hecke and diamond operators relative to systems of eigenvalues coming from $p$-primitive modular eigenforms of exact level dividing $Mp^t$ (recall that a modular newform is $p$-primitive if the $p$-part of the conductor of its nebentypus equals the $p$-part of its level).

The notation $H^1_\dR(\W_?(p^t),\Hs_{\nu-2})_L^*$ denotes the so-called \emph{pure classes}, i.e., those whose restriction to supersingular annuli is trivial (such annuli correspond to the complement $\W_?(p^t)-\A_?(p^t)$).

\medskip
According to \cite[p. 36]{BE2010}, the isomorphism \eqref{decompinfzero} is equivariant for the action of diamond and Hecke operators (away from $Mp$) and for the action of $\Phi$ on $H^1_\dR(X_{t,F}, \Hs_{\nu-2})_L^{\mathrm{prim}}$ and the action of $(\Phi_\infty,\Phi_0)$ on $H^1_\dR(\W_\infty(p^t),\Hs_{\nu-2})_L^*\oplus H^1_\dR(\W_0(p^t),\Hs_{\nu-2})_L^*$.

\medskip
In the same way as in \cite[Lemme 4.3.2]{BE2010}, one can show that the action of $\Phi_\infty$ on overconvergent modular forms of weight $\nu$, seen as sections of $\omega^{\nu}$ in a wide open neighbourhood of $\A_\infty(p^t)$ inside $\W_\infty(p^t)$, can be described as the $d$-th power of an operator $\varphi_\infty$ which on $q$-expansions act as $p^{(\nu-1)}\langle p\,;1\rangle V$. Here (cf. the end of Section \ref{derhametale}) $\langle a\,; b\rangle$ denotes the diamond operator corresponding to the unique class in $(\Z/Mp^t\Z)^\times$ which is congruent to $a$ modulo $M$ and to $b$ modulo $p^t$, while $V$ acts on $q$-expansions sending $q\mapsto q^p$ as usual.

\begin{rmk}
\label{phiinftyrmk}
According to \cite[p. 398]{Col1997} (see also \cite[Lemme 4.2.1]{BE2010}), for every quasi-Stein open  $W\subseteq W_\infty(p^t)$, the inclusion
\[
H^0(W,\omega^{\nu})\cong H^0(W,\omega^{\nu-2}\otimes\Omega^1\langle D_F\rangle)\hookrightarrow H^0(W,\Hs^{\nu-2}\otimes\Omega^1\rangle D_F\rangle)
\]
induces a Hecke-equivariant isomorphism
\begin{equation}
\label{Colemaniso}
\frac{H^0(W,\omega^{\nu})}{d^{k-1}\big(H^0(W,\omega^{2-k})\big)}\cong\frac{H^0(W,\Hs_{\nu-2}\otimes\Omega^1\langle D_F\rangle)}{\nabla\big(H^0(W,\Hs_{\nu-2})\big)}=H^1_{\dR}(W,\Hs_{\nu-2}),    
\end{equation}
where $d=q\cdot d/dq$ is Serre's derivative operator. The sections in $H^0(W,\omega^\nu)$ with $p$-depleted $q$-expansion admit a preimage under $d^{k-1}$ and, if $W$ contains the closure of $\A_\infty(p^t)$ (i.e., it is a strict neighbourhood of $\A_\infty(p^t)$), the restriction
\[
H^1_{\dR}(\W_\infty(p^t),\Hs_{\nu-2})\to H^1_{\dR}(W,\Hs_{\nu-2})
\]
is an isomorphism (cf. \cite[Remarque 4.4.5]{BE2010}). It follows that in this case the operator $U_p$ is an isomorphism on $H^1_\dR(\W_\infty(p^t),\Hs_{\nu-2})_L$  and it holds $U_p\circ \varphi_\infty=\varphi_\infty\circ U_p=\langle p\;;1\rangle p^{\nu-1}$.
\end{rmk}


\section{The explicit reciprocity law}
\label{explicitreciprocitychapt}
In this section we state and prove the main result of this work.
\subsection{Statement and first consequences}
We consider as before a triple $(f,g,h)$ of modular forms satisfying assumption \ref{balselfdual} and which is moreover $F$-exponential (cf. definition \ref{expconvtriple}). In what follows we keep the notation introduced in the previous sections.

\begin{defi}
\label{p-adicperiod}
We define the $p$-adic period
\[
\mathscr{I}_p(f,g,h):=\log_\BK^{fgh}(\kappa(f,g,h))(\eta_{f'}^{\varphi=a_p}\otimes\omega_g\otimes\omega_{h'}\otimes t_{r+2})\in L
\]
attached to the $F$-exponential triple $(f,g,h)$.
\end{defi}

Note again that the definition of $\mathscr{I}_p(f,g,h)$ does not depend on $F$, but only on the triple $(f,g,h)$ (and possibly on the common level $Mp^t$ chosen).

\begin{teo}
\label{explicirreclawconj}
Assume that the triple $(f,g,h)$ is $(F,1-T)$-convenient and that the forms $g$ and $h$ are $p$-depleted eigenforms. 
Then
\[
\mathscr{I}_p(f,g,h)=(-1)^{k-2}(r-k+2)!\cdot a_1(e_{\breve{f}}(\Tr_{Mp^t/M_1p^t}(d^{(k-l-m)/2}g\times h')))
\]
where $d=q\tfrac{d}{dq}$ denotes Serre's derivative operator and $a_1(-)$ denotes the first Fourier coefficient of the $q$-expansion at $\infty$.
\end{teo}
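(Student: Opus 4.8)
The plan is to follow the strategy of \cite[theorem A]{BSV2020a} together with the general recipe of \cite{BLZ2016} for the cohomological triple symbol, replacing throughout the good-reduction syntomic cohomology by the finite-polynomial cohomology of the semistable model $\W_t$ introduced in section \ref{syntomicabeljacobichapt}. First I would pass from the étale to the syntomic side. By proposition \ref{efgequal}(a) the triple $(f,g,h)$ is $(F,1-T)$-convenient, hence $F$-exponential, so $\log_\BK^{fgh}$ is well defined; moreover, by theorem \ref{phiNGaloistheorem} and definition \ref{stBKexpdefi} the map $\tfrac{1}{[F:\Q_p]}\exp_{\BK,F}^{-1}\circ\res_{F/\Q_p}$ entering its definition is inverse to $\exp_\st$. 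Combined with the commutative diagram relating $AJ_{\syn,F}$ and $AJ_{\et,F}$ of section \ref{abeljacobisyn}, this shows that $\log_\BK^{fgh}(\kappa(f,g,h))$ is represented by the syntomic Abel--Jacobi class $\delta_{t,*}(\DET_\mathbf{r}^\syn)\in H^1_\st(F,D^3)$. Pairing this against the test vector $\eta_{f'}^{\varphi=a_p}\otimes\omega_g\otimes\omega_{h'}\otimes t_{r+2}\in\Fil^0(V^*_\dR(f,g,h))$ through the duality \eqref{dRdualityfgh} then becomes, after lifting the test vector to a compactly supported finite-polynomial class $\widetilde\Omega$ on $\W_t$ via the identification $\widetilde H^1_{\st,Q}(F,D)\cong D_{\dR,F}/\Fil^0D_{\dR,F}$ of lemma \ref{lemmast1} (for a polynomial $Q$ whose roots are chosen so that the relevant $\varphi$-complements are inverted), a syntomic cup product followed by the trace map \eqref{syntomictracemap}.

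Next I would apply the syntomic projection formula \eqref{syntomicprojformula} for the closed immersion $\delta_t\colon\XX_t\hookrightarrow\W_t$: the triple symbol $\Tr_{\W_t,\syn}(\delta_{t,*}(\DET_\mathbf{r}^\syn)\cup\widetilde\Omega)$ collapses to a cup product on the \emph{single} modular curve $\XX_t$, namely $\Tr_{\XX_t,\syn}(\DET_\mathbf{r}^\syn\cup\delta_t^*\widetilde\Omega)$, where $\delta_t^*\widetilde\Omega$ is assembled from the three finite-polynomial lifts of $\eta_{f'}^{\varphi=a_p}$, $\omega_g$, $\omega_{h'}$ and the twist $t_{r+2}$. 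The coefficients are $\Hs_\mathbf{r}=\Hs_{r_1}\otimes\Hs_{r_2}\otimes\Hs_{r_3}$ and $\DET_\mathbf{r}^\syn$ is the syntomic incarnation of the polynomial $P_\mathbf{r}$ of section \ref{appendixlinear}; the determinantal factor of degree $r-r_1$ in the $(y,z)$-variables is exactly what, under Poincaré duality on $\Hs$, implements an iterate of the Gauss--Manin connection, and since $(k-l-m)/2=-(r-r_1+1)$ this produces both the constant $(r-k+2)!=(r-r_1)!$, the sign $(-1)^{k-2}=(-1)^{r_1}$, and the Serre-derivative power $d^{(k-l-m)/2}$ occurring in the statement.

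The computation of the finite-polynomial primitives is the technical heart, and this is where assumption \ref{cuspasslocp} is used. Restricting to the wide open $\W_\infty(p^t)$ of section \ref{overconvergentsection}, the de Rham classes of $g$ and $h'$ are $p$-depleted (because $U_pg=U_ph=0$ forces $g=g^{[p]}$ and $h'=(h')^{[p]}$) and of $\Phi_\infty$-slope $(1-l)/2$ and $(1-m)/2$; since $V$ is invertible on $H^1_\dR(\W_\infty(p^t),\Hs_{\nu-2})_L$ the operators $Q(p^{-n}\Phi)$ that occur are invertible, so the primitives $y_g,y_{h'}$ in the sextuple description of remark \ref{explicitsyntclasses} are genuine rigid (Coleman) primitives, computed on $q$-expansions by iterated applications of $d^{-1}$. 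Stitching them together according to $P_\mathbf{r}$ realizes the $p$-adic modular form $g\times d^{(k-l-m)/2}h'$ of weight $k$, and pairing it against the $\varphi=a_p(f)$-eigenclass $\eta_{f'}^{\varphi=a_p}$ via the residue/Poincaré pairing extracts precisely $a_1(e_{\breve f}(\Tr_{Mp^t/M_1p^t}(-)))$, which is forced by the very definition of $\eta_{f'}$ (definition \ref{defidifferentials}(ii)) together with the Hecke- and Atkin--Lehner-equivariance of the Hyodo--Kato/de Rham comparison. That no Euler-type factor appears, and that $h'$ rather than a further depletion occurs, both reflect $\alpha_g=\alpha_h=0$, so that the factor $(1-\beta_f\alpha_g\alpha_h/p^{r+2})$ of theorem \ref{erlintroteoPS} degenerates to $1$.

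The main obstacle I expect is twofold. On the analytic side, one must ensure that the formalism of section \ref{materialonsynfp} genuinely applies over the semistable (not smooth) model: that the Hyodo--Kato maps $\Psi_{F,?}$ are isomorphisms for the non-unipotent isocrystals attached to $\Hs_r$ and $\Ls_r$ (true by \cite{CI2010}, or via the Lieberman trick reducing to trivial coefficients on products of Kuga--Sato varieties as in remark \ref{finalsyntomicrmk}), and that $\delta_{t,*}$, the trace map and the cup products satisfy the compatibilities postulated in remark \ref{syntomicwishes}. On the arithmetic side, the delicate point is the \emph{localization} argument: showing that only the $\W_\infty(p^t)$-component of the Coleman decomposition \eqref{decompinfzero} contributes, while the $\W_0(p^t)$-part and the contributions along the supersingular annuli vanish. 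This is precisely where supercuspidality and $U_pg=U_ph=0$ enter, since they prevent any $p$-stabilization on the $\W_0$-side and make the relevant classes behave on $\W_\infty$ like the $p$-depleted overconvergent objects for which Coleman primitives exist; without this step the triple symbol would not reduce to a single Fourier coefficient. Finally, careful bookkeeping of the relations of remark \ref{f'desc} linking $f$, $f'$, $f'^\circ$ and $\breve f$, and of the many Tate twists, is needed to match the constants exactly.
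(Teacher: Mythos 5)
Your proposal follows essentially the same path the paper takes: étale-to-syntomic via the compatibility diagram of section \ref{abeljacobisyn}, the projection formula to collapse the cup product from $\W_t$ to $\XX_t$ (this is precisely proposition \ref{syntomictrace}), passage to a de Rham pairing (lemma \ref{syntoderham}), restriction to $\W_\infty(p^t)$, computation of Coleman primitives via $d^{-1}$-iterates on $p$-depleted $q$-expansions (lemma \ref{primitivelemma}), and a final combinatorial identity in the style of \cite[pp.\ 1023--1024]{BSV2020a} producing the constant $(-1)^{k-2}(r-k+2)!$. The overall architecture is correct.

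One substantive misattribution is worth flagging. You locate the source of the ``localization'' to $\W_\infty(p^t)$ in the supercuspidality of $g$ and $h$ (``this is precisely where supercuspidality and $U_pg=U_ph=0$ enter, since they prevent any $p$-stabilization on the $\W_0$-side''). In the paper's lemma \ref{restrictoWinftylemma} this step has nothing to do with $g$ or $h$: one first inserts the ordinary projector $e_\ord$ using its adjointness with $e'_\ord$ and the fact that $\eta_{f'}^{\varphi=a_p}$ is anti-ordinary (remark \ref{dualUponeta}), and then observes that $\Phi_0$ acts on $\eta_{f'}^{\varphi=a_p}|_{\W_0(p^t)}$ with eigenvalue $\beta_p(f)^d$, whereas $\Phi$ acts on $\eta_{f'}^{\varphi=a_p}$ with eigenvalue $a_p(f)^d$; since $f$ is $p$-ordinary of weight $k\geq 2$ these differ, forcing $\eta_{f'}^{\varphi=a_p}|_{\W_0(p^t)}=0$. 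The $f$-ordinarity, not the supercuspidality of $g,h$, is what kills the $\W_0$-component. The supercuspidality hypothesis ($U_p g=U_p h=0$, so $g=g^{[p]}$ and $h'=(h')^{[p]}$) enters elsewhere: it is what makes the primitives $F_\xi^\flat$ expressible through the $p$-adic limit operators $d^{-1-j}$ (lemma \ref{primitivelemma}) and what kills the correction term $y_{gh'}|_{\W_\infty(p^t)}$ (remark \ref{Ukillspdepleted}), and it is why no Euler factor $\left(1-\frac{\beta_f\alpha_g\alpha_h}{p^{r+2}}\right)^{-1}$ survives in the final formula. Relatedly, you refer to ``the primitives $y_g,y_{h'}$'' as the Coleman primitives; in the sextuple notation $(u,v;w,x,y;z)$ of remark \ref{explicitsyntclasses} the Coleman primitive is the Hyodo--Kato component $F_\xi$ (the ``$w$''-slot), while the paper shows that the $y$-components are constants (remark \ref{explicitmapspectral10}). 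These are fixable slips, not structural gaps, but as written they would make the key lemma \ref{restrictoWinftylemma} unprovable from the stated hypotheses.
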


\begin{rmk}
Note that Theorem \ref{explicirreclawconj} applies to the situation described by Assumption \ref{cuspasslocp}, as long as the triple $(f,g,h)$ is $(F,1-T)$-convenient.
\end{rmk}

\medskip
The rest of the paper is devoted to the proof of Theorem \ref{explicirreclawconj}. We will initially obtain a proof that, in the case of balanced triples of weights $(k,l,m)\neq(2,2,2)$ is conditional on Assumption \ref{conditional}. In the final Section \ref{thegeneralcasesubsection}, we indicate how the machinery introduced in the recent preprint \cite{ABSV2025} can be applied to obtain an unconditional proof of Theorem \ref{explicirreclawconj} for general balanced triples of weights $(k,l,m)$. 

\subsection{Restriction to the ordinary locus}
For $\xi\in\{g,h'\}$ we have that
\[
\label{omegaxi1}
\omega_\xi\otimes t_{\nu-1}\in\Fil^{\nu-1} H^1_{\dR,c}(Y_{t,\Q_p},\Hs_{\nu-2})_L,
\]
where (as before) $t_n$ is a generator of $\D_{\dR}(\Q_p(n))$ (depending on a choice of a compatible system of $p$-th power roots of unity in $\C_p$).

Then there exists a polynomial $P_\xi(T)\in 1+T\cdot L[T]$ such that $P_\xi(\Phi)(\omega_\xi\otimes t_{\nu-1})=0$. Here, we are letting the Frobenius $\Phi$ act on de Rham cohomology via the comparison isomorphism (recall that $Y_t$ acquires semistable reduction over $F$)
\begin{align*}
F\otimes_{F_0}\D_{\st,F}\big(H^1_{\et,c}(Y_{t,\bar{\Q}_p},\Hs_{\nu-2}(\nu-1))_L\big)&=\D_{\dR,F}\big(H^1_{\et,c}(Y_{t,\bar{\Q}_p},\Hs_{\nu-2}(\nu-1))_L\big)\\
& \cong H^1_{\dR,c}(Y_{t,F},\Hs_{\nu-2}[\nu-1])_L.
\end{align*}

In the situation of Remark \ref{phigammagh}, one can choose $P_\xi(T):=1-\mu_\xi^d p^{d(\nu-1)}\cdot T\in 1+T\cdot L[T]$. 

\medskip
With the notation introduced in Section \ref{phiNsection}, we have that
\[
\omega_\xi\otimes t_{\nu-1}\in \Tilde{H}^0_{\st,P_\xi}\big(F,\D_{\pst}(H^1_{\et,c}(Y_{t,\bar{\Q}_p},\Hs_{\nu-2}(\nu-1))_L)\big).
\]

\begin{lemma}
\label{degenklarge}
For every $P(T)\in 1+T\cdot L[T]$, every $\nu\in\Z_{\geq 2}$ and every $n\in\Z$, there are natural isomorphisms
\begin{equation}
\label{degeniso}
H^1_{\NNc}(Y_{t,F},\Hs_{\nu-2},n,P)_L\cong H^0_{\st,P}(F,D^1(n)).
\end{equation}
If, moreover, $\nu>2$, there are natural isomorphisms
\begin{equation}
\label{degeniso2}
H^2_{\NNc}(Y_{t,F},\Hs_{\nu-2},n)_L\cong H^1_{\st,P}(F,D^1(n)),
\end{equation}
where
\[
D^j(n):=\D_\pst\big(H^j_{\et,c}(Y_{t,\bar{\Q}_p},\Hs_{\nu-2}(n))_L\big).
\]
Analogous isomorphisms hold for the $F$-linearized (or tilded) variants.
\begin{proof}
It holds $H^0_{\et,c}(Y_{t,\bar{\Q}_p},\Hs_{\nu-2})=0$ for every $\nu\geq 2$ and, if $\nu>2$, it also holds that $H^0_\et(Y_{t,\bar{\Q}_p},\Hs_{\nu-2})=0$ (cf. \cite[Lemma 2.1]{BDP2013}) and (by duality) $H^2_{\et,c}(Y_{t,\bar{\Q}_p},\Hs_{\nu-2})=0$.
The claimed isomorphisms follow from the study of the degeneration of the spectral sequence (cf. equation \eqref{etaletosyntomicspectralsequencecoeff})
\begin{equation}
\label{syntomictostspectral}
    E_2^{i,j}:=H^i_{\st,P}(F,D^j(n))\Rightarrow H^{i+j}_{\NNc}(Y_{t,F},\Hs_{\nu-2},n,P)_L.
\end{equation}
The same proof applies to the $F$-linearized variants.
\end{proof}
\end{lemma}

The next lemma shows that, under suitable hypothesis, one can recover the isomorphism \eqref{degeniso2} also for weight $\nu=2$.
\begin{lemma}
Let $P(T)\in 1+T\cdot L[T]$ and $n\in\Z$ such that $P(\zeta p^{n-1})\neq 0\neq P(\zeta p^n)$ for all $\zeta\in\mu_d$. Then there is a natural isomorphism
\begin{equation}
H^2_{\NNc}(Y_{t,F},n,P)\cong H^1_{\st,P}\big(F,\D_\pst\big(H^1_{\et,c}(Y_{t,\bar{\Q}_p},\Q_p(n))_L\big)\big)  
\end{equation}
The analogous isomorphism holds for the $F$-linearized variants if the chosen polynomial satisfies $P(q^{n-1})\neq 0\neq P(q^n)$ (where recall $q=p^d$).
\begin{proof}
Note that in this case $\D_\pst\big(H^2_{\et,c}(Y_{t,\bar{\Q}_p},\Q_p(n))_L\big)\cong\Q_p^{nr}(1-n)$. Under the given hypothesis, Lemma \ref{lemmast1} shows that $H^i_{\st,P}(F,\Q_p^{nr}(1-n))=0$ for $i\neq 1$. This implies that, as in the previous Lemma \ref{degenklarge}, we can exploit the degeneration of the spectral sequence \eqref{syntomictostspectral} to deduce the result. The same proof applies to the $F$-linearized variants.
\end{proof}
\end{lemma}

\medskip
\begin{cor}
\label{syntomicliftsgh'}
For $\xi\in\{g,h'\}$, the class $\omega_\xi\otimes t_{\nu-1}$ can be uniquely lifted to a class
\[
\omega^{\NN}_{\xi,\nu-1}\in \Tilde{H}^1_{\NNc}(Y_{t,F},\Hs_{\nu-2},\nu-1,P_\xi)_L.
\]
\end{cor}

\begin{rmk}
\label{etalift}
Similarly, one can lift $\eta_{f'}^{\varphi=a_p}\in V^*_{\dR,Mp^t}(f')\subseteq H^1_{\dR,c}(Y_{t,\Q_p},\Hs_{k-2})_L$ to a syntomic class. We have
\[
\eta_{f'}^{\varphi=a_p}\in \Fil^0 H^1_{\dR,c}(Y_{t,F},\Hs_{k-2})_L
\]
and, setting, $P_f(T):=1-a_p(f)^{-d}\cdot T\in 1+T\cdot L[T]$ we have (by construction, cf. Definition \ref{etafrobp}) that
\[
P_f(\Phi)(\eta_{f'}^{\varphi=a_p})=0.
\]
One can proceed as above to get a unique lift
\begin{equation}
\label{synteta}
    \eta^{\NN}_{f'}\in \Tilde{H}^1_{\NNc}(Y_{t,F},\Hs_{k-2},0,P_f)_L
\end{equation} 
of $\eta_{f'}^{\varphi=a_p}$.   
\end{rmk}

\begin{prop}
\label{syntomictrace}
Assume that $(f,g,h)$ is $(F,1-T)$-convenient. Then the $p$-adic period introduced in Definition \ref{p-adicperiod} satisfies (with the notation introduced above):
\begin{equation}
\label{syntomictraceformula}
    \mathscr{I}_p(f,g,h) = \Tr_{Y_{t,F},\NN,P_{fgh}}\Big(\eta^{\NN}_{f'}\bigcup \Upsilon(\DET_\mathbf{r}^{\NN}\cup\omega^{\NN}_{g,l-1}\cup\omega^{\NN}_{h',m-1})\Big)\,.
\end{equation}
The notation is as follows.
\begin{itemize}
    \item [(i)] $\DET_\mathbf{r}^{\NN}\in H^0_{\NN}(Y_{t,F},\Hs_\mathbf{r},k-2,1-p^{k-2-r}T)$ is the syntomic incarnation of $\DET_\mathbf{r}^\et\otimes t_{k-2-r}$ (via the isomorphism discussed in Remark \ref{AJsyntomicrmk} (i)), which we view as element of $\Tilde{H}^0_{\NN}(Y_{t,F},\Hs_\mathbf{r},k-2,1-q^{k-2-r}T)$ with $q=p^d$, cf. Remark \ref{usetildermk}.
    \item [(ii)] The cup products are taken in ($F$-linearized) finite polynomial cohomology and the big cup product $\bigcup$ arises from the pairing
    \[
    \Hs_{k-2}\otimes_{\OO_{Y_{t,F}}}\Hs_{k-2}[k]\to \OO_{Y_{t,F}}[2]\,,
    \]
    where recall that for all $i\geq 0$ one has pairings $\Hs_i\otimes_{\OO_{Y_{t,F}}}\Hs_i\to \OO_{Y_{t,F}}[-i]$ (cf. equation \ref{deRhamdualsheaf}), which we also use to produce the map
    \[
    \Upsilon:\Hs_\mathbf{r}[k-2]\otimes_{\OO_{Y_{t,F}}}\Hs_{l-2}[l-1]\otimes_{\OO_{Y_{t,F}}}\Hs_{m-2}[m-1]\to\Hs_{k-2}[k]\,.
    \]
    \item [(iii)] The syntomic trace map
    \begin{equation}
    \label{tracefgh}
    \Tr_{Y_{t,F},\syn, P_{fgh},L}: \Tilde{H}^3_{\NNc}(Y_{t,F},2,P_{fgh})_L\twoheadrightarrow F\otimes_{\Q_p}L\xrightarrow{[F:\Q_p]^{-1}\Tr_{F/\Q_p}\otimes 1}L    
    \end{equation}
    is defined since $P_{fgh}:=P_f*(1-p^{d(k-2-r)}T)*P_g*P_{h'}$ satisfies the conditions $P_{fgh}(1)\neq 0\neq P_{fgh}(1/q)$ (by the assumption that $(f,g,h)$ is $(F,1-T)$-convenient). 
     \end{itemize}
\begin{proof}
Since we are assuming that $\D_\pst(V(f,g,h))$ is an $(F,1-T)$-convenient quotient of $\D_\pst(H^3_\et(Y^3_{t,\bar{\Q}_p},\Hs_{[\mathbf{r}]}(r+2))_L)$, 
the compatibility between cup products in the spectral sequences of type \eqref{etaletosyntomicspectralsequencecoeff} (cf. Remark \ref{NNsynwithcoeffrmk}, (ii)') (or better their $F$-linearized variants) implies that
\begin{equation}
\label{syntomicZt}
    \mathscr{I}_p(f,g,h)=\Tr_{Y_{t,F}^3,\NN,P_{fgh}}\big(\delta_{t,*}(\DET_\mathbf{r}^{\NN})\cup_{[\mathbf{r}]}\omega^{\NN}_{fgh}\big)\,.
\end{equation}

The notation is as follows:
\begin{itemize}
    \item [(i)] The trace $\Tr_{Y^3_{t,F},\syn,P_{fgh},L}:\Tilde{H}^7_{\NNc}(Y_{t,F}^3,4,P_{fgh})_L\twoheadrightarrow L$ is again defined as in \eqref{tracefgh} above.
    \item [(ii)] The class $\omega^{\NN}_{fgh}$ is the image of the element
    \[
    (\eta_{f'}^{\varphi=a_p})\otimes(\omega_g\otimes t_{l-1})\otimes (\omega_{h'}\otimes t_{m-1})\in \Tilde{H}^0_{\st,P_{fgh}}\big(F,\D_\pst(H^3_{\et,c}(Y^3_{t,\bar{\Q}_p},\Hs_{[\mathbf{r}]}(2r+4-k))_L)\big)
    \]
    under the isomorphism
    \[
    \Tilde{H}^3_{\NNc}(Y_{t,F}^3,\Hs_{[\mathbf{r}]},2r+4-k,P_{fgh})_L\cong \Tilde{H}^0_{\st, P_{fgh}}\big(F,\D_\pst(H^3_{\et,c}(Y^3_{t,\bar{\Q}_p},\Hs_{[\mathbf{r}]}(2r+4-k))_L)\big)
    \]
    (note that we have implicitly used Künneth decomposition). This isomorphism is once more afforded by the spectral sequence \eqref{etaletosyntomicspectralsequencecoeff}, observing that $H^i_{\et,c}(Y^3_{t,\bar{\Q}_p},-)=0$ for $i\notin\{3,4,5,6\}$, since $Y^3_{t,F}$ is an affine threefold.
    \item [(iii)] The cup product $\cup_{[\mathbf{r}]}$ is induced by the pairing 
    \[
    \Hs_{[\mathbf{r}]}[k]\otimes_{\OO_{Y^3_{t,F}}}\Hs_{[\mathbf{r}]}[2r+4-k]\to\OO_{Y^3_{t,F}}[4]\,.
    \]
\end{itemize}

The careful reader will note that we have performed a suitable twist on both sides of the pairing (which does not affect the equality \eqref{syntomicZt}).

It is easy to see that
\[
\omega^{\NN}_{fgh}=p_1^*(\eta^{\NN}_{f'})\cup p_2^*(\omega^{\NN}_{g,l-1})\cup p_3^*(\omega^{\NN}_{h',m-1}),
\]
where $p_i:Y^3\to Y$ denotes the $i$-th projection for $i\in\{1,2,3\}$.

Applying the projection formula \eqref{syntomicprojformulacoeff}, it follows that
\begin{equation}
\label{syntomicXt}
    \mathscr{I}_p(f,g,h) = \Tr_{Y_{t,F},\NN,P_{fgh}}\Big(\DET_\mathbf{r}^{\NN}\cup_\mathbf{r}\big(\eta^{\NN}_{f'}\cup\omega^{\NN}_{g,l-1}\cup\omega^{\NN}_{h',m-1}\big)\Big)\,,
\end{equation}
where the cup product $\cup_\mathbf{r}$ arises from the pairing $\Hs_\mathbf{r}[k-2]\otimes_{\OO_{Y_{t,F}}}\Hs_\mathbf{r}[2r+4-k]\to \OO_{X_{t,F}}[2]$, obtained combining the pairings $\Hs_i\otimes_{\OO_{Y_{t,F}}}\Hs_i\to\OO_{X_{t,F}}[-i]$ for $i\in\{r_1,r_2,r_3\}$.

Formula \eqref{syntomictraceformula} in the statement of the proposition is essentially \eqref{syntomicXt} after suitably rearranging the pairings.
\end{proof}
\end{prop}

\medskip
Once obtained the formula in Proposition \ref{syntomictrace}, we move the computation to the rigid analytic settings, where the objects involved can be made more explicit.

\begin{nota}
We introduce the notation $\langle -,-\rangle_{Y_{t,F},\NN,P_{fgh}}$ to denote the pairing
\[
\Tilde{H}^1_{\NNc}(Y_{t,F},\Hs_{k-2},0,P_f)_L\otimes_{F\otimes_{\Q_p}L} \Tilde{H}^2_{\NN}(Y_{t,F},\Hs_{k-2},k,P_{gh'})_L\to F\otimes_{\Q_p}L
\]
(where $P_{gh'}:=(1-q^{k-2-r}T)*P_g*P_{h'}$) appearing in the statement of Proposition \ref{syntomictrace}. The corresponding pairing for log-rigid finite polynomial cohomology (cf. Remark \ref{logrigidcristallinecompatibility} and Assumption \ref{conditional}) will be denoted $\langle-,-\rangle_{\Tilde{X}_t,\lrigs,P_{fgh}}$.
\end{nota}

\begin{cor}
\label{rigidsyntomiccorollary}
Assume that $(f,g,h)$ is $(F,1-T)$ convenient and that $(k,l,m)=(2,2,2)$. Then we have:
\[
\mathscr{I}_p(f,g,h)=\Big\langle\eta^{\lrig}_{f'}\;,\;\omega^{\lrig}_{g,1}\cup\omega^{\lrig}_{h',1}\Big\rangle_{\Tilde{X}_t,\lrigs,P_{fgh}}.
\]
Under Assumption \ref{conditional} (in the case $(k,l,m)\neq (2,2,2)$), we have that
\[
\mathscr{I}_p(f,g,h)=\Big\langle\eta^{\lrig}_{f'}\;,\;\Upsilon(\DET^{\lrig}_{\mathbf{r}}\cup \omega^{\lrig}_{g,l-1}\cup\omega^{\lrig}_{h',m-1})\Big\rangle_{\Tilde{X}_t,\lrigs,P_{fgh}}.
\]
Here the superscript $(-)^{\lrig}$ denotes the image of $(-)$ under the (conjectural in higher weight) comparison isomorphism between Nekov\'{a}\v{r}--Nizio\l{} and log-rigid finite polynomial cohomology.
\begin{proof}
This is a direct consequence of the discussion in Remark \ref{logrigidcristallinecompatibility} and Remark \ref{logrigidtilded}(for the case $(k,l,m)=(2,2,2)$). See again Assumption \ref{conditional} for the discussion concerning the higher weight case.
\end{proof}
\end{cor}

\begin{ass}
\label{F1-Tconvenient}
From now on, we assume that the triple $(f,g,h)$ satisfying assumptions \ref{balselfdual} is always $(F,1-T)$-convenient and that the eigenforms $g$ and $h$ are $p$-depleted.
\end{ass}

\begin{rmk}
\label{dualUponeta}
From the explicit description of the class $\eta_{f'}^{\varphi=a_p}$ (cf. Definitions \ref{defidifferentials} and \ref{etafrobp}) and the fact that the operators $U_p$ and $U_p'$ are adjoint to each other under the Poincaré pairing in de Rham cohomology, one can easily check that 
\[
U_p'(\eta_{f'}^{\varphi=a_p})=a_p(f)\chi_f(p)^{-1}\cdot \eta_{f'}^{\varphi=a_p}\,.
\]
In particular, $\eta_{f'}^{\varphi=a_p}$ is fixed by the anti-ordinary projector $e'_\ord=\lim_{n\to +\infty} (U'_p)^{n!}$.
\end{rmk}

\begin{lemma}
\label{restrtoinfty}
Let $V_0$ denote the union of all the irreducible components of the special fiber of $\Tilde{X}_t$ except the one denoted $\Ig_\infty(p^t)$, with open complement $Z_0$. Let $\VV$ and $\ZZ$ denote the respective tubes inside $X_{t,F}^{an}$. Then the image of $\eta_{f'}^{\varphi=a_p}$ under the restriction
\[
H^1_{\dR}(X_{t,F}\langle-D_F\rangle,\Hs_{k-2})_L\cong H^1_{\dR}(\XX_{t,F}\langle-\DD\rangle,\Hs_{k-2})_L\to H^1_{\dR}(\VV\langle -\DD\rangle,\Hs_{k-2})_L
\]
is trivial. In particular, looking at the long exact sequence (cf. \cite[Proposition 8.2.18]{LS2007})
\[
\to H^i_{\dR,c}(\ZZ\langle -\DD\rangle,\Hs_{k-2})_L\to H^i_{\dR}(\XX_{t,F}\langle -D_F\rangle,\Hs_{k-2})_L\to H^i_{\dR}(\VV\langle -\DD\rangle,\Hs_{k-2})_L\xrightarrow{+1},
\]
we deduce that there exists a class $\eta_{f',\infty}^{\varphi=a_p}\in H^1_{\dR,c}(\ZZ\langle -\DD\rangle,\Hs_{k-2})_L$ whose image inside $H^1_{\dR,c}(\XX_{t,F}\langle -\DD\rangle,\Hs_{k-2})_L$ is precisely $\eta_{f'}^{\varphi=a_p}$.
\begin{proof}
Since the form $f'^{\circ}$ is $p$-primitive (cf. Remark \ref{f'desc} and note our assumptions rule out case (ii) there), the isomorphism \eqref{decompinfzero} (which, recall, is induced by restriction to $\W_\infty(p^t)$ and $\W_0(p^t)$), gives an isomorphism
\[
H^1_\dR(X_{t,F},\Hs_{k-2})_L[f'^\circ]\cong H^1_\dR(\W_\infty(p^t),\Hs_{k-2})_L^*[f'^\circ]\oplus H^1_\dR(\W_0(p^t),\Hs_{k-2})_L^*[f'^\circ]\,.
\]
In particular, since clearly $\eta_{f'}^{\varphi=a_p}\in H^1_\dR(X_{t,F},\Hs_{k-2})_L[f'^\circ]$, we are left to prove that $\eta_{f'}^{\varphi=a_p}|_{\W_0(p^t)}=0$.

Since $\langle p\,;1\rangle\circ U'_p=w_{p^t}\circ U_p\circ w_{p^t}^{-1}$ and $w_{p^t}^2=\langle p^t; -1\rangle$, one sees immediately that Remark \ref{dualUponeta} implies that:
\[
U_p\circ w_{p^t}(\eta^{\varphi=a_p}_{f'})=a_p(f)\cdot w_{p^t}(\eta^{\varphi=a_p}_{f'})\,.
\]
Restricting our attention to $\W_0(p^t)$ and keeping track of the various definitions (cf. Remark \ref{phiinftyrmk}), one obtains:
\begin{align*}
\Phi_0(\eta^{\varphi=a_p}_{f}|_{\W_0(p^t)_F}) & =w_{p^t}^{-1}\circ\Phi_\infty\circ w_{p^t}(\eta^{\varphi=a_p}_{f'}|_{\W_0(p^t)_F})\\
& =\big(\frac{p^{k-1}\cdot\chi_f(p)}{a_p(f)}\big)^d\cdot\eta^{\varphi=a_p}_{f'}|_{\W_0(p^t)_F}\\
& = \beta_p(f)^d\cdot \eta^{\varphi=a_p}_{f'}|_{\W_0(p^t)_F}\,.
\end{align*}

\medskip
Since by construction $\Phi(\eta^{\varphi=a_p}_{f'})=(a_p(f))^d\cdot \eta^{\varphi=a_p}_{f'}$ (and $|a_p(f)|_p\neq|\beta_p(f)|_p$ since $f$ is $p$-ordinary and $p$-primitive of weight $k\geq 2$), we deduce that $\eta^{\varphi=a_p}_{f'}|_{\W_0(p^t)_F}=0$.
\end{proof}
\end{lemma}

Observe that $Z_0$ is nothing but the smooth locus of $\Ig_\infty(p^t)$ inside $\Tilde{X}_t$ and $\ZZ$ is precisely what we previously denoted $\A_\infty(p^t)$ (or rather the associated dagger space).

\medskip
Let $Z:=X_{t,F}\cup Z_0\subset\Tilde{X}_t$. We can apply Proposition \ref{extrestrprop} to our computation and obtain the following crucial equality:
\begin{equation}
\mathscr{I}_p(f,g,h)=\langle\eta^{\rig}_{f',\infty,k-2}\,, \res_Z(\Upsilon(\DET_\mathbf{r}^{\lrig}\cup\omega^{\lrig}_{g,l-1}\cup\omega^{\lrig}_{h',m-1})\rangle_{Z,\rigs,P_{fgh}}   
\end{equation}
Here $\eta^{\rig}_{f',\infty}$ is any lift of $\eta_{f',\infty}^{\varphi=a_p}$ under the surjection denoted $\mathbf{p}_{P_f}$ (cf. equation \eqref{ppsurjection}), so that its image under the extension-by-0 map (cf. Proposition \ref{extrestrprop}) is the class denoted $\eta^{\lrig}_{f'}$ above. Clearly $\langle - ,-\rangle_{Z,\rigs,P_{fgh}}$ denotes the corresponding pairing in rigid finite polynomial cohomology.

\medskip
Finally we can apply the projection formula \eqref{crucialprojectionformula} to reduce to a pairing in the rigid cohomology of $Z_0$:
\begin{equation}
\label{rigidpairing}
\mathscr{I}_p(f,g,h)=\frac{1}{P_{fgh}(p^{-d})}\cdot \langle\eta^{\varphi=a_p}_{f',\infty}\,, \Xi_{g,h'}\rangle_{Z_0,\rig} 
\end{equation}
where $\Xi_{g,h'}\in H^1_{\rig}(Z_0\langle D_0\rangle,\Hs_{k-2})_L$ is defined as follows. We have a short exact sequence (cf. \eqref{sesrigidsyntomic})
\begin{equation}
\label{sesH2rigsint}
\begin{tikzcd}[column sep=small, row sep=small]
&0\arrow[r]&\frac{H^{1}_{\rig}(Z_0\langle D_0\rangle,\Hs_{k-2})_L}{P(q^{-r}\Phi)\circ\mathrm{sp}(\Fil^k H^{1}_{\dR}(Z_F\langle D_F\rangle,\Hs_{k-2})_L)}\arrow[r, "\mathbf{i}_{P_{gh'}}"] &\Tilde{H}^2_{\rigs}(Z\langle D\rangle,\Hs_{k-2},k,P_{gh'})_L\arrow[dl, controls={+(-1,-1) and +(1,1)}]\\
&&\Fil^k H^{2}_{\dR}(Z_F\langle D_F\rangle,\Hs_{k-2})_L^{P_{gh'}(q^{-r}\Phi)\circ\mathrm{sp}=0}\arrow[r] &0.   
\end{tikzcd}
\end{equation}
Since $H^2_{\dR}(Z_F\langle D_F\rangle,\Hs_{k-2})\cong H^2_{\dR}(Y_{t,F},\Hs_{k-2})=0$ (as $Y_{t,F}$ is an affine curve), we can lift the class $\res_Z(\Upsilon(\DET_\mathbf{r}^{\lrig}\cup\omega^{\lrig}_{g,l-1}\cup\omega^{\lrig}_{h',m-1}))$ to an element $\Xi_{g,h'}\in H^1_{\rig}(Z_0\langle D_0\rangle,\Hs_{k-2})_L$. Such lift is unique, since $\Fil^k H^{1}_{\dR}(Z_F\langle D_F\rangle,\Hs_{k-2})_L=0$.

\medskip
The denominator appearing in the RHS of equation \ref{rigidpairing} comes from the definition of the trace in finite polynomial cohomology (which has to be compatible with respect to change of polynomial).

\subsection{End of the proof}
\label{endofproof}
This section concludes the proof of Theorem \ref{explicirreclawconj}. We are left to give a description of the overconvergent modular form which gives rise to the class $\Xi_{g,h'}$.

\medskip
In order to better justify our computations, we need to introduce a modified version of rigid finite polynomial cohomology, namely Gros fp-cohomology for the $\OO_F$-scheme $Z$. We refer to \cite[Section 8.2]{LZ2024} for some more details about this construction.

Here we just recall that one can define Gros fp-cohomology as mapping fibre
\[
\widetilde{R\Gamma}_{\rigfp}(Z\langle D\rangle,\Hs_s,r,P):=\big[\widetilde{\Fil}^r R\Gamma_{\dR}(\ZZ\langle\DD\rangle,\Hs_s)_F\xrightarrow{P(q^{-r}\Phi)}R\Gamma_{\rig}(Z_0\langle D_0\rangle,\Hs_s)_F\big]
\]
Here $\widetilde{\Fil}^r R\Gamma_{\dR}(\ZZ\langle\DD\rangle,\Hs_s)_F$ denotes the so-called truncated rigid cohomology, defined as the cohomology of the subcomplex $\Fil^{r-\bullet}\Hs_r\otimes\Omega^\bullet_{\ZZ/F}\langle\DD\rangle$ (note that the cohomology groups of such complex are not necessarily finite dimensional).

\medskip
A completely analogous description affords the compactly supported version of Gros fp-cohomology and one has cup products (we omit the coefficients for reasons of space)
\[
\widetilde{R\Gamma}_{\rigfp}(Z\langle D\rangle,r_1,P)\times \widetilde{R\Gamma}_{\rigfp,c}(Z\langle -D\rangle,r_2,Q)\to\widetilde{R\Gamma}_{\rigfp,c}(Z\langle -D\rangle,r_1+r_2,P*Q).
\]
defined using the formalism of cup product in finite polynomial cohomology (cf. \cite[p. 37]{Bes2012}).

\medskip
As observed in \cite[Remark 8.2.6]{LZ2024}, the specialization map gives a map
\[
\gamma^*: \widetilde{R\Gamma}_{\rigs}(Z\langle D\rangle,r,P)\to \widetilde{R\Gamma}_{\rigfp}(Z\langle D\rangle,\Hs_s,r,P)
\]
and the cospecialization map gives a map
\[
\gamma_*: \widetilde{R\Gamma}_{\rigfp,c}(Z\langle -D\rangle,\Hs_s,r,P)\to\widetilde{R\Gamma}_{\rigs,c}(Z\langle -D\rangle,r,P).
\]
The cup product satisfies the adjunction formula
\[
\gamma_*(\gamma^*(x)\cup y)=x\cup\gamma_*(y).
\]

\begin{rmk}
We observe that we can safely move our computation to a pairing in Gros-fp cohomology due to the following facts.
\begin{enumerate}[(i)]
    \item From Lemma \ref{restrtoinfty} and the description of Gros-fp cohomology, we see that $\eta^{\varphi=a_p}_{f',\infty}$ can be lifted to a class $\eta^{\rigfp}_{f',\infty}\in\tilde{H}^1_{\rigfp,c}(Z\langle -D\rangle,\Hs_{k-2},0,P_f)$ such that $\gamma_*(\eta^{\rigfp}_{f',\infty})=\eta^{\rig}_{f',\infty}$.
    \item Using the analogue of the short exact sequence \eqref{sesH2rigsint} for Gros-fp cohomology, we obtain identifications
    \[
    H^1_{\rig}(Z_0\langle D_0\rangle,\Hs_{k-2})_L\cong \Tilde{H}^2_{\rigs}(Z\langle D\rangle,\Hs_{k-2},k,P_{gh'})_L\cong \Tilde{H}^2_{\rigfp}(Z\langle D\rangle,\Hs_{k-2},k,P_{gh'})_L
    \]
    so that the class that we denoted $\Xi_{g,h'}$ can be computed via Gros-fp cohomology, without changing the outcome.
\end{enumerate}
\end{rmk}

\begin{rmk}
Recall that the class $\eta^{\varphi=a_p}_{f',\infty}$ is fixed by the antiordinary projector $e'_{\ord}$. Hence we obtain that only the ordinary projection of $\Xi_{g,h'}$ contributes to the computation, i.e., we have
\begin{equation}
\mathscr{I}_p(f,g,h)=\frac{1}{P_{fgh}(p^{-d})}\cdot \langle\eta^{\varphi=a_p}_{f',\infty}\,, e_{\ord}(\Xi_{g,h'})\rangle_{Z_0,\rig}
\end{equation}
\end{rmk}

\medskip
The advantage of working with Gros fp-cohomology is that classes in
\[
\Tilde{H}^i_{\rigfp}(Z\langle D\rangle,\Hs_{s},r,P)
\]
can be explicitly described as pairs $(\omega,F)$ with $\omega\in\Fil^{r-i}\Hs_s\otimes\Omega^i\langle\DD\rangle(\ZZ)$ closed and $F\in\Hs_s\otimes\Omega^{i-1}(\ZZ)$ such that $\nabla F=P(\Phi)\omega$ (up to suitable identifications).

\begin{rmk}
\label{Ukillspdepleted}
Recall that $U_p\circ\varphi_\infty=\varphi_\infty\circ U_p=\langle p\,;1\rangle p^{r+1}$ on $H^1_{\rig}(Z_0\langle D_0\rangle,\Hs_r)_L$ for every $r\in\Z_{\geq 0}$ and that the operator $U_p$ kills sections of the form $\xi(q)\cdot \omega_\can^{r-j}\otimes\eta^j\otimes \frac{dq}{q}$, where $\xi(q)$ is a $p$-depleted $q$-expansion (i.e., a $q$-expansion where the terms corresponding to integers divible by $p$ vanish). It follows that such sections are exact (hence trivial in $H^1_{\rig}(Z_0\langle D_0\rangle,\Hs_r)_L$). We are assuming that the $q$-expansions of $g$ and $h'$ are $p$-depleted, so that the classes $\omega_g$ and $\omega_{h'}$ become trivial when restricted to $H^1_{\rig}(Z_0\langle D_0\rangle,\Hs_r)_L$.
\end{rmk}

Overconvergent primitives of the restriction of $\omega_{\xi}\otimes t_{\nu-1}$ to $\ZZ$ for $\xi\in\{g,h'\}$ are given by sections $F^\flat_\xi$ of $\Hs_{\nu-2}$ (where $\nu=l$ if $\xi=g$ and $\nu=m$ if $\xi=h'$) defined on a strict neighbourhood of $\A_\infty(p^t)$ and can be described explicity around the cusp $\infty$ (cf. \cite[Proposition 3.24]{BDP2013}) as:
\[
F^\flat_{\xi,\infty}=\sum_{j=0}^{\nu-2} (-1)^j j!\binom{\nu-2}{j}\cdot d^{-1-j}\xi\cdot\omega_\can^{\nu-2-j}\otimes \eta^j_\can\otimes t_{\nu-1}.
\]
For $\xi\in\{g,h'\}$, the class of $\gamma^*\res_Z(\omega^{\lrig}_{\xi,\nu-1})$ can be then described by the pair $(\omega_{\xi,\nu-1},P_\xi(\Phi_\infty)F^{\flat}_{\xi})$, where we view $\omega_{\xi,\nu-1}$ as a section of $\Fil^{\nu-1}\Hs_{\nu-2}\otimes\Omega^1\langle\DD\rangle$.

Note that here we are implicitly using the fact that the Frobenius is horizontal with respect to the Gauss--Manin connection (as prescribed by definition of overconvergent $F$-isocrystal, cf. for instance \cite[Definition 3.13]{BDP2013}) to exhibit $P_\xi(\Phi_\infty)F_{\xi}^{\flat}$ as a (local) primitive of $P_\xi(\Phi_\infty)(\omega_{\xi,\nu-1})$.

\medskip
The formalism of cup-product in finite polynomial cohomology (cf. \cite[p. 37]{Bes2012}) shows that the cup product of the pairs $(\omega_{g,l-1},P_g(\Phi_\infty)F^{\flat}_{g})$ and $(\omega_{h',m-1},P_{h'}(\Phi_\infty)F^{\flat}_{h'})$ can be described by a pair $(\omega_{g,l-1}\otimes\omega_{h',m-1},\Xi'_{g,h'})$, where
\[
\Xi'_{g,h'}=a(\Phi_\infty,\Phi_\infty)\big(P_g(\Phi_\infty)(F^\flat_g)\otimes\omega_{h',m-1}\big)-b(\Phi_\infty,\Phi_\infty)\big(\omega_{g,l-1}\otimes P_{h'}(\Phi_\infty)(F_{h'}^\flat)\big).
\]
The polynomials $a(X,Y)$ and $b(X,Y)$ are chosen so that
\[
P_g*P_{h'}(XY)=a(X,Y)P_g(X)+b(X,Y)P_{h'}(Y).
\]
Using the fact that $F_g^\flat\otimes\omega_{h',m-1}+\omega_{g,l-1}\otimes F_{h'}^\flat=\nabla(F_g^\flat\otimes F_{h'}^\flat)$ is a $\nabla$-closed form, we see that $\Xi'_{g,h'}$ is equal to $P_{g}*P_{h'}(\Phi_\infty\otimes\Phi_\infty)(F_g^\flat\otimes\omega_{h',m-1})$ modulo $\nabla$-closed forms.

\medskip
It follows again from the formalism of cup-product in finite polynomial cohomology and from the above considerations that $\Xi_{g,h'}$ can be represented by the differential form 
\begin{equation}
\label{almostthere}
\Upsilon \big(\DET^{\infty}_{\mathbf{r}}\cup (P_{g}*P_{h'}(\Phi_\infty\otimes\Phi_\infty)(F_g^\flat\otimes\omega_{h',m-1}))\big)=P_{gh'}(\Phi_\infty)\big(\Upsilon(\DET^{\infty}_{\mathbf{r}}\cup (F_g^\flat\otimes\omega_{h',m-1}))\big),   
\end{equation}
where $\DET^\infty_{\mathbf{r}}\in H^0_{\rig}(Z_0\langle D_0\rangle, \Hs_{\mathbf{r}})$ is simply the restriction of the de Rham incarnation of $\DET^{\et}_{\mathbf{r}}$ to the rigid cohomology of $Z_0$.

\medskip
The equality in \eqref{almostthere} follows observing that $\Phi$ acts on $\DET^{\infty}_{\mathbf{r}}$ as multiplication by $q^{r+2-k}$.

\medskip
Recall the notation $\breve{f}:=\lambda_{M_1}(f)^{-1}\cdot w_{M_1}(f)$, where $\lambda_{M_1}(f)$ is the pseudo-eigenvalue for the action of $w_{M_1}$ on $f$. Since the linear functional defining $\eta^{\varphi=a_p}_{f'}$ factors through the $\breve{f}$-isotypic projection and, since $\Phi_\infty$ commutes with $U_p$, we obtain that
\begin{align*}
\mathscr{I}_p(f,g,h)&=\frac{1}{P_{fgh}(p^{-d})}\cdot \langle\eta^{\varphi=a_p}_{f',\infty}\,, e_{\ord}(\Xi_{g,h'})\rangle_{Z_0,\rig}=\\
&=\frac{1}{P_{fgh}(p^{-d})}\cdot \Big\langle\eta^{\varphi=a_p}_{f',\infty}\,, P_{gh'}(\Phi_\infty)e_{\ord}\big(\Upsilon(\DET^{\infty}_{\mathbf{r}}\cup F_g^\flat\otimes\omega_{h',m-1})\big)\Big\rangle_{Z_0,\rig}=\\
&=\Big\langle\eta^{\varphi=a_p}_{f',\infty}\,,e_{\ord}\big(\Upsilon(\DET^{\infty}_{\mathbf{r}}\cup F_g^\flat\otimes\omega_{h',m-1})\big)\Big\rangle_{Z_0,\rig}
\end{align*}
The last equality uses the (easily checked) fact that $\Phi_\infty(\omega_{\breve{f}}\otimes t_k)=(a_p(f)\cdot p)^{-d}\cdot\omega_{\breve{f}}\otimes t_k$, which implies that $P_{gh'}(\Phi_\infty)(\omega_{\breve{f}}\otimes t_k)=P_{fgh'}(p^{-d})$.
\medskip

Since we assume that $f$ is $p$-primitive and $p$-ordinary (in particular of small slope), Coleman's results in \cite{Col1997} show that the map obtained as the composition
\[
S_k(\Gamma_1(Mp^t),L)\to \frac{H^0(W,\omega^{\nu})_L}{d^{k-1}\big(H^0(W,\omega^{2-k})_L\big)}\cong H^1_{\rig}(Z_0\langle D_0\rangle,\Hs_{k-2})_L,    
\]
(where $W$ is any affinoid strict neighbourhood of $\A_\infty$ in $\W_\infty$ and the isomorphism is a reinterpretation of \eqref{Colemaniso}, since rigid cohomology can be computed as the de Rham cohomology of wide open subspaces) becomes an isomorphism after passing to the $\breve{f}$-isotypic quotients (which also entails applying the ordinary projector $e_\ord$). Hence, in order to compute the pairing 
\[
\Big\langle\eta^{\varphi=a_p}_{f',\infty}\,, e_{\ord}\big(\Upsilon(\DET^{\infty}_{\mathbf{r}}\otimes F_g^\flat\otimes\omega_{h',m-1})\big)\Big\rangle_{Z_0,\rig}
\]
we are just left to exhibit any (cuspidal) modular form which maps to the class
\[
e_{\ord}\big(\Upsilon(\DET^{\infty}_{\mathbf{r}}\cup( F_g^\flat\otimes\omega_{h',m-1}))\big)
\]
under the above composition.

\medskip
This can be done essentially in the same way as in \cite[pp. 1023-1024]{BSV2020a}, obtaining that, as classes in $H^1_{\rig}(Z_0\langle D_0\rangle,\Hs_{k-2})_L$, we have
\[
e_{\ord}\big(\Upsilon(\DET^{\infty}_{\mathbf{r}}\cup( F_g^\flat\otimes\omega_{h',m-1}))\big)=(-1)^{k-2}(r-k+2)!\cdot \omega_{\Xi^\ord(g,h')}\otimes t_{k},
\]
where $\Xi^\ord(g,h'):=e_{\ord}(d^{(k-l-m)/2}g\times h')$.
\medskip
Combining everything we can conclude that
\begin{equation}
\begin{aligned}
\mathscr{I}_p(f,g,h)&=(-1)^{k-2}(r-k+2)!\cdot\eta^{\varphi=a_p}_{f'}(e_{\ord}(d^{(k-l-m)/2}g\times h')=\\
&=(-1)^{k-2}(r-k+2)!\cdot a_1\big(e_{\breve{f}}(\Tr_{Mp^t/M_1p^t}(e_{\ord}(d^{(k-l-m)/2}g\times h'))\big).
\end{aligned}
\end{equation}

\medskip
This completes the proof of Theorem \ref{explicirreclawconj}.

\subsection{On the proof in the case \texorpdfstring{$(k,l,m)\neq(2,2,2)$}{(k,l,m)neq(2,2,2)}}
\label{thegeneralcasesubsection}
During the revision process for this article, the preprint \cite{ABSV2025} has appeared. The authors introduce a syntomic (and finite polynomial) formalism that allows automorphic coefficients and is engineered to obtain proofs of $p$-adic explicit reciprocity laws in cases where the relevant Galois cohomology classes are (potentially) crystalline, but come from the cohomology of a Shimura variety which only admits a strictly semistable model over the ring of integers of a large enough finite extension $F$ of $\Q_p$. We will refer to this cohomology theory as the \emph{ABSV syntomic (or finite polynomial) cohomology}.

\medskip
Here we briefly indicate how one could use the results of loc. cit. to obtain a full proof of Theorem \ref{explicirreclawconj} in the case of a balanced triple of weights $(k,l,m)\neq(2,2,2)$. Since the resulting proof would be formally the same as the one obtained above, we decided to sketch how the results of loc. cit. can be applied in our case, leaving the details to the interested reader. 

\medskip
The ABSV syntomic cohomology satisfies the analogues of properties (ii), (iii), (iv) of Remark \ref{NNsyntomicfeaturesrmk} (cf. Sections from 13 to 16 of loc. cit.), taking into account the two following caveats.

\medskip
The first issue is that Gysin morphisms (i.e., what concerns point (iv)) are only constructed in the case of divisors. In our case, the diagonal embedding $d_t: X_t\hookrightarrow X_t^3$ has codimension $2$, but we can clearly view it as a successive embedding $X_t\hookrightarrow X_t^2\hookrightarrow X_t^3$, so we can still obtain Gysin maps in the ABSV formalism simply applying their construction twice (taking a bit of care in obtaining suitable semistable models for $X_t,X_t^2,X_t^3$ over the ring of integers of a large enough finite extension of $\Q_p$).

\medskip
The second problem concerns trace maps, i.e., point (iii). Note that the formalism of loc. cit. employs the semilinear (non-linearized) version of Frobenius. In order to dispose of the necessary syntomic trace map for $X_t^3$ (resp. $X_t$) one is led to check that the pair $(P_{fgh}((T/p^4)^d),4)$ (resp. $(P_{fgh}((T/p^2)^d),2)$ is \emph{admissible} in the sense of \cite[Definition 13]{ABSV2025}, where $P_{fgh}$ is as in the statement of Proposition \ref{syntomictrace} and $d=[F:\Q_p]$ as usual. These two conditions are equivalent to each other and translate to the fact that $P'_{fgh}(T):=P_{fgh}(T^d)$ cannot have roots of the form $\zeta$ or $p^{-1}\zeta$ with $\zeta$ a root of unity, which is already implied by our assumption that the triple $(f,g,h)$ is $(F,1-T)$-convenient.

\begin{rmk}
In fact, in the first version of \cite{ABSV2025}, the authors had defined a trace map for their syntomic (and finite polynomial) formalism only after imposing slightly more restrictive assumptions on the possible roots of the polynomials employed in the machinery. In particular this would not have allowed roots of the form $p^{-1/2}\zeta$ for $P'_{fgh}$. Luckily this restriction has been removed in the current version of loc. cit., which now only impose standard assumptions for the existence of syntomic traces.
\end{rmk}

\begin{lemma}
\label{admissiblelemma}
The polynomial $P'_{fgh}$ does not have roots of the form $\zeta$ or $p^{-1}\zeta$ with $\zeta$ a root of unity in the following cases:
\begin{enumerate}[(i)]
\item in the setting of Assumption \ref{cuspasslocp}, if the weight $k$ of $f$ satisfies $k> 2$;
\item in the setting described by Remark \ref{princseriescase}.
\end{enumerate}
\begin{proof}
The same calculations in the proof of Proposition \ref{efgequal} show that, in the setting of Assumption \ref{cuspasslocp}, one can write $P(T):=P'_{fgh}(T)$ in the form $P(T)=\prod_{i\in I}(1-\gamma_i^{-1}T)$, with $\ord_p(\gamma_i)=-k/2$, whence the trace-admissibility if $k> 2$.

On the other hand, under the condition (PS) of Remark \ref{princseriescase}, we can choose $P(T)$ as above with
\[
\gamma_i^{-1}=\frac {p^{r+2}\zeta}{a_p(f)\alpha_{j_{1},g}\alpha_{j_{2},h}},
\]
where again $\zeta$ is a root of unity and, for $j_1$ ranging in $\{1,2\}$, we let $\alpha_{1,g},\alpha_{2,g}$ denote the two \emph{eigenvalues} of Frobenius acting on $\D_{\st,F}(V^*(g))$ (the situation is completely analogous to the one described in the proof of Lemma \ref{decompeta} in the potentially crystalline case). The same notation applies to $h$. In this case, we can rely on the Ramanujan--Petersson conjecture (now a theorem), which implies that $|\alpha_{j_{1},g}|_\infty=p^{(l-1)/2}$ (where $|\cdot|_\infty$ denotes the Euclidean absolute value), and similarly $|\alpha_{j_2,h}|_\infty=p^{(m-1)/2}$ and $|a_p(f)|_\infty=p^{(k-1)/2}$. Again, a rapid calculation shows that $|\gamma_i|_\infty=p^{-1/2}$ and the lemma follows.
\end{proof}
\end{lemma}

A surrogate of property (i) of Remark \ref{NNsyntomicfeaturesrmk} is spelled out in \cite[Section 12]{ABSV2025}. In particular, the diagram in the statement of \cite[Proposition 12.1]{ABSV2025} shows that one can lift classes in the Bloch--Kato subspace $H^1_{e}(F,-)$ (cf. Definition \ref{BKsubspacesdef}) to ABSV syntomic cohomology. Thus, one can formally obtain the analogue of Proposition \ref{syntomictrace} working with ABSV syntomic cohomology.

\medskip
Finally, the discussion in \cite[Section 16.4 and 16.5]{ABSV2025} describes the behaviour of the theory concerning the restriction to rigid syntomic (or finite polynomial cohomology) of the smooth locus of a connected component of the special fiber of the given semistable model, obtaining the analogue of Proposition \ref{extrestrprop}. Note that the rigid syntomic (and finite polynomial) cohomology with coefficients in the smooth case appearing in \cite[Definition 6]{ABSV2025} coincides with the one introduced in Section \ref{rigidsyntomicsmoothsection}. Hence, one can virtually proceed in the proof to obtain the formula in equation \eqref{rigidpairing} and then conclude exactly as in Section \ref{endofproof}.

\printbibliography

\end{document}